\newtheorem{thm}{Theorem}[section]
\newtheorem{prop}[thm]{Proposition}
\newtheorem{lemma}[thm]{Lemma}
\theoremstyle{remark}
\newtheorem{remark}{Remark}
\newtheorem{example}[thm]{Example}
\newcommand{\e}{\epsilon}
\newcommand{\p}{\partial}
\newcommand{\g}{\Gamma}
\newcommand{\A}{\alpha}
\newcommand{\na}{\nabla}
\newcommand{\la}{\lambda}
\newcommand{\R}{\mathbb{R}}
\newcommand{\Z}{\mathbb{Z}}
\newcommand{\N}{\mathbb{N}}
\newcommand{\F}{\mathcal{F}}
\newcommand{\PP}{\mathcal{P}}
\newcommand{\HH}{\mathbb{H}}
\newcommand{\les}{\lesssim}
\newcommand{\E}{\mathcal{E}}
\newcommand{\U}{\mathcal{U}}
\newcommand{\h}{\mathcal{H}}
\newcommand{\ind}[1]{\mathds{1}_{{#1}}}
\def\beq{\begin{equation}}
\def\eeq{\end{equation}}
\numberwithin{equation}{section}
\subjclass[2020]{42B25, 42B20}
\keywords{Variational estimates, Stein-Wainger operator, singular integral operator}
\begin{document}
\title[ Variational Estimates of SW]{Sharp variational estimates of Stein-Wainger type operators}
%On sharp variational  inequalities  for oscillatory integral operators of Stein-Wainger type

\author[R. Wan]{ Renhui Wan}

\address{Ministry of Education Key Laboratory of NSLSCS, School of Mathematical Sciences and Institute of Mathematical Sciences, Nanjing Normal University, Nanjing 210023, People's Republic of China}

\email{wrh@njnu.edu.cn}

%In pursuit of the objective, we use an approach incorporating the method of stationary phase, a square function estimate derived from Seeger's arguments, Stein-Tomas restriction estimates, and a crucial observation enabling the derivation of useful decay estimates near endpoints.

\vskip .2in
\begin{abstract}
For any integer $n \geq 2$,  we establish  $L^p(\R^n)$ inequalities for the $r$-variations of Stein-Wainger type oscillatory integral operators with general phase functions.
These inequalities   closely  related to Carleson's theorem are sharp, up to endpoints.
 In particular,
 when the phase function is chosen as $|t|^\A$ with $\A\in (0,1)$,
 our results provide an affirmative   answer to a question posed in Guo-Roos-Yung (Anal. PDE, 2020).
 Furthermore,  we obtain the restricted weak type estimates for
endpoints  in the specific case of homogeneous phase functions.
%As an application of  these outcomes, we further extend analogous results to  a class of ergodic  integral operators.

\end{abstract}

\maketitle

\section{Introduction}
\label{intrs1}
\subsection{Motivation and main results}\label{Mr}
Let $n\geq 2$, and consider a function $\g: \R \to \R$ that is smooth on $\R^+$ and right-continuous at the origin. For each $x \in \R^n$ and $u \in \R$, we define a modulated  singular integral as follows:
$$H^{(u)} f(x):={\rm p.v.}\int_{\R^n} f(x-t)e^{iu\g(|t|)}K(t)dt,$$
where  $K(t)$ is a homogeneous Calder\'{o}n-Zygmund
kernel, in the sense  that
\beq\label{v1}
K(t)=\frac{\Omega(t)}{|t|^n}
\eeq
for some function $\Omega$ which is smooth on $\R^n\setminus\{0\}$, homogeneous of degree 0,\footnote{The assumption that $K$ is homogeneous is not strictly necessary; its inclusion is aimed at streamlining the proof of main results and subsequently improving the presentation of this paper.}  and possesses the property of mean value zero, that is,  $\int_{\mathbb{S}^{n-1}}\Omega(x)d\sigma(x) = 0$, where $\sigma$ denotes the surface measure on  $\mathbb{S}^{n-1}$.   Henceforth $f$ will always be a Schwartz function on $\R^n$, and the objective is to establish a priori $L^p$ inequalities for  $r$-variations  of  $\{H^{(u)}f\}_{u\in\R}$ for all such $f$. Here,   the $r$-variation is defined   as follows:
For $r\in [1,\infty)$ we define
the $r$-variation  $V_r$ of a complex-valued function $\R\times \R^n \ni (u,x) \to {\bf a}_u(x)$
 by setting
\beq\label{dfvr}
V_r({\bf a}_u(x): {u \in \mathcal{K}}):=
 \sup_{J\in\N} \sup_{\substack{u_{0} <  \dotsb < u_{J}\\ \{u_{j}\}\subset\mathcal{K}}}
\big(\sum_{j=1}^{J}  |{\bf a}_{u_{j}}(x)-{\bf a}_{u_{j-1}}(x)|^{r} \big)^{1/r}
\eeq
where $\mathcal{K}$ is a subset of $\R$.
  The seminorm $V_r$  serves as a crucial tool in addressing pointwise convergence issues. If the estimate
 $$V_r\big({\bf a}_u(x):u\in \R\big)<\infty$$
 holds for some $r\in [1,\infty)$ and $x\in \R^n$, then,  the limits
 $\lim_{u\to u_0}{\bf a}_u(x)$ ($u_0$ is finite), $\lim_{u\to  -\infty}{\bf a}_u(x)$ and $\lim_{u\to \infty}{\bf a}_u(x)$ exist.
 Consequently, there is no need to establish pointwise convergence over a dense class, which is often a challenging problem (see, for example, Mirek--Stein--Trojan \cite[Page 668]{MST17}). In addition, the seminorm $V_r$ governs the supremum norm as follows: For any $u_0\in \R$, we can obtain  the pointwise estimate
 $$\sup_{u\in\R}|{\bf a}_u(x)|\le |{\bf a}_{u_0}(x)|+V_r\big({\bf a}_u(x):u\in \R\big).$$
A norm on the space
 $V^r(\mathcal{K})$  is given by
$\|\mathbf{a}_u\|_{V_r(u\in \mathcal{K})}:=\|\mathbf{a}_u\|_{L^\infty(u\in \mathcal{K})}+V_r\big(\mathbf{a}_u:u\in \mathcal{K}\big)$.
We also introduce the related  $\la$-jump $(\la>0)$ function of the family
$\{\mathbf{a}_u(x)\}_{u\in \mathcal{K}}$,  denoted by $N_\la\big(\mathbf{a}_u(x):u\in \mathcal{K}\big)$. It is
defined as the supremum
of all positive integers $J$ for which there is a strictly increasing sequence $s_1<t_1<s_2<t_2<\cdots<s_J<t_J$, all in $\mathcal{K}$,
such that
$|\mathbf{a}_{t_j}(x)-\mathbf{a}_{s_j}(x)|>\la$
for all $j=1,\ldots,J$.
Introducing the jump inequality is convenient for estimating  long variations in the proofs of our main results.
For more details on the relationship between variational inequalities and jump inequalities, we refer to \cite[Pages 554-558]{FITW20}  and \cite{JSW08,MSZ1}.

For the case of $r=\infty$, we obtain  the  associated  $V_\infty$  seminorm. Since this seminnorm  of  the family $\{H^{(u)} f\}_{u\in\R}$   is closely
connected to the  maximal function
\beq\label{nan1}
\sup_{u\in\R}|H^{(u)} f|,
\eeq
 we begin by presenting some pertinent results concerning this maximal function.
The maximal operator defined by (\ref{nan1}) with $\g(|t|)=|t|^\A$, where $\A>1$, was introduced by Stein and Wainger \cite{SW21}. It serves as a generalization of the Carleson operator studied in \cite{Car96,Fe73,LT20}. Furthermore, this operator with an even $\A$ can also be viewed as a special polynomial Carleson operator, a central focus in a conjecture proposed by Stein. Recently, this conjecture was resolved by V. Lie \cite{LV20} in the one-dimensional case  and by P. Zorin-Kranich \cite{ZK21} in higher dimensions.
  The aforementioned results are related to the continuous setting, and there are also important research efforts in the realm of discrete analogues.
   Specifically, the discrete analogue of (\ref{nan1}) is defined as follows:
  \beq\label{d1001}
  \sup_{u\in\R}\big|\sum_{t\in \Z^n\setminus \{0\}} f(x-t)e^{iu\g(|t|)}K(t)\big|,\ \ \ x\in \Z^n,
  \eeq
  where $K(t)$ is given by (\ref{v1}).
  Krause and Roos in their seminal works \cite{KR22,KR23} established $l^p(\Z^n)$ estimates for (\ref{d1001})  when $\g(|t|)=|t|^{2d}$ with positive integers $d$.
Recently, by Weyl's equidistribution theory, Krause \cite{K23} extended phase functions studied in \cite{KR22,KR23} to  generic polynomials  without linear terms.
Their proofs reveal that addressing the case of more general phase functions introduces additional complexities. See \cite{WC26} for related work concerning the variational inequality in the scale parameter, as opposed to the modulation parameter $u$ considered here

For $r\in (2,\infty)$, L\'{e}pingle's inequality on the $r$-variation of martingales is particularly noteworthy, as discussed in \cite{L76,PX88}. Besides, various works have explored the $r$-variation of significant operators in ergodic theory and harmonic analysis; for instance, see \cite{JSW08,BMSW18,BMSW19,BORSS22,Bour89,CJRW00,CJRW03,MSZ20,MSZ202} and references therein. In fact, $L^p$ inequalities for  $r$-variations may be used to establish  the discrete analogues of maximal estimate; see, for example,  \cite{GRY20,K18}.
Subsequently, we outline some works that specifically address variation-norm estimates, closely connected to our primary results.
In the one-dimensional scenario, with the phase function $\g(|t|)$ replaced by $t$, the $r$-variation estimate associated with the Carleson operator was investigated  by Oberlin, Seeger, Tao, Thiele, and Wright \cite{OSTTW12}. More recently, for $n \ge 1$, $p \in (1,\infty)$, $r \in (2,\infty)$, and $r > p'/n$, and with $\g(|t|)=|t|^\A$ where $\A>1$, Guo, Roos, and Yung \cite{GRY20} obtained sharp $L^p$ estimates for the $r$-variation of the operators $\{H^{(u)}\}_{u\in\R}$, up to  endpoints $r=p'/n$. 
%Their approach involved leveraging square function estimates for fractional Schr\"{o}dinger operators and establishing local smoothing estimates using the $l^p$ decoupling inequality (see, e.g., \cite{BD15,W00}).
Their work presents an insightful synthesis, skillfully harnessing square function estimates for fractional Schr\"{o}dinger operators and deploying the  $\ell^p$
  decoupling inequality (see, e.g., \cite{BD15,W00}) to establish local smoothing estimates. 
Crucially, the estimate for the maximal function (\ref{nan1}), established in \cite{SW21}, served as a fundamental tool, akin to a black box, in demonstrating the main results in \cite{GRY20}.
However, no information is provided for the endpoints $r=p'/n$, and the authors in \cite{GRY20} posed a natural question regarding  what happens when $\A$ falls within the range of $(0,1)$ for $n\ge 2$. As of now, these questions remain open. Motivated by these considerations, we focus on the following questions:
\begin{itemize}
\item
 {\bf Question} 1: For $n \ge 2$, do analogous $L^p$ inequalities hold for $r$-variations of the operators $\{H^{(u)}\}_{u\in\R}$, encompassing generic phase functions, including the case of $|t|^\A$ with $\A\in (0,1)$?
\smallskip

\item
 {\bf Question} 2: What happens for the endpoints $r=p'/n$?
\end{itemize}

To state our main results, we will  introduce the set $\U$ (see \cite{LY22,LSY21,LYu22,LV24} for some important  related variants), which consists of all functions $\g$ satisfying $\g\in C([0,\infty))\cap C^N(\R^+)$ with $N\in\N$ sufficiently large, $\g(0)=0$, $\g'>0$ on $\R^+$, and complying with the following conditions:
\begin{align}
(i)\  \text{There are positive constants}~ \{\mathfrak{C}_{1,j}\}_{j=1}^2 \text{ such that}~   \left|\frac{s^j\g^{(j)}(s)}{\g(s)}\right|\ge \mathfrak{C}_{1,j} \text{ for all } s\in\mathbb{R}^+; \label{curv}\\
(ii)\  \text{There are positive constants}~ \{\mathfrak{C}_{2,j}\}_{j=1}^N \text{ such that}~ \left|\frac{s^j\g^{(j)}(s)}{\g(s)}\right|\le \mathfrak{C}_{2,j} \text{ for all } s\in\mathbb{R}^+. \label{smooth}
\end{align}

One of the main results of this paper, addressing question 1, is   the following theorem.
\begin{thm}\label{endpoint}
   Let $n\ge 2$ and  $\g\in \U$. If $p\in(1,\infty)$, $r\in(2,\infty)$ and $r>p'/n$, then we have
\beq\label{var}
\|V_r\big(H^{(u)} f: u\in\R\big)\|_{p}
\les \|f\|_{p}.
\eeq
Moreover, there exist Schwartz functions $f$ such that  (\ref{var})  fails whenever $r<p'/n$.
\end{thm}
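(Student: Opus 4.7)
The plan is to decompose $K = \sum_{j\in\Z} K_j$ with $K_j(t)=K(t)\phi(|t|/2^j)$ for a standard radial bump $\phi$, so that $H^{(u)} = \sum_j H_j^{(u)}$. The cancellation $\int_{\mathbb{S}^{n-1}}\Omega\,d\sigma=0$ together with standard almost-orthogonality yields exponential gain in $|j-j_0|$ on any frequency shell $|\xi|\sim 2^{-j_0}$, so it suffices to estimate each $H_j^{(u)}$ uniformly in $j$. A change of variables $t\mapsto 2^j t$ together with the reparametrisation $u\mapsto u/\g(2^j)$ reduces $H_j^{(u)}$ to a model with radial phase $\g_j(s):=\g(2^j s)/\g(2^j)$; conditions (\ref{curv}) and (\ref{smooth}) imply that $\{\g_j\}_{j\in\Z}$ is a precompact family in $C^N$ on compact subsets of $\R^+$ with uniformly non-degenerate derivatives, so every stationary-phase and restriction estimate below can be carried out uniformly in~$j$.

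\textbf{Short vs.\ long variations, and the restriction input.} I would then split
\[
V_r\bigl(H^{(u)}f:u\in\R\bigr) \le V_r\bigl(H^{(u)}f:u\in\{2^k\}_{k\in\Z}\bigr) + \Bigl(\sum_{k\in\Z}V_r\bigl(H^{(u)}f:u\in[2^k,2^{k+1}]\bigr)^2\Bigr)^{1/2}.
\]
On each short interval $[2^k,2^{k+1}]$ in $u$, stationary phase in the radial direction of $t$ concentrates the multiplier of $H_j^{(u)}$ on the annulus $|\xi|\sim 2^k\g'(2^j)$ of controlled thickness, so the Sobolev embedding $V_r\hookrightarrow W^{1/r,r}$ combined with an $\ell^r$-valued square function estimate after Seeger yields the short-variation contribution. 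For the long variation I would pass through the jump functional $N_\la$ to an $L^p(\ell^r)$ bound on the sampled family $\{H_j^{(2^k)}f\}_k$. Stationary phase expresses the multiplier of $H_j^{(2^k)}$, up to errors tamed by integration by parts, as a thin-shell symbol $\rho_{j,k}^{-(n-1)/2}\,e^{i\Psi(\xi)}\,\chi\bigl(\rho_{j,k}^{-1}(|\xi|-\rho_{j,k})\bigr)$ with $\rho_{j,k}\sim 2^k\g'(2^j)$; applying Stein-Tomas restriction to this multiplier and interpolating with the trivial $L^2$ bound gives an $L^p\to L^p$ estimate of size $\rho_{j,k}^{-n/p'}$. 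Summing the resulting geometric series in $\ell^r$ over $k$ (and $j$) converges precisely when $r>p'/n$, matching the claimed threshold. The crucial observation that secures closure near the endpoint is that off-resonance $|\xi|\not\sim\rho_{j,k}$, repeated integration by parts in the $u$ variable applied to the variation/jump functional produces rapid decay in~$k$, uniformly in~$j$.

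\textbf{Main obstacle and sharpness.} The principal technical difficulty is the absence of exact homogeneity for general $\g\in\U$: the rescaled family $\{\g_j\}$ does not collapse to a single phase, so all stationary-phase expansions, remainder control, and restriction-type estimates must be made uniform over this precompact family, and conditions (\ref{curv})--(\ref{smooth}) are exactly what supplies that uniformity. In particular, $\g(|t|)=|t|^\A$ with $\A\in(0,1)$ belongs to $\U$ and requires no separate treatment, resolving Question~1. For the negative direction, a Knapp-type example suffices: choose a Schwartz function whose Fourier transform is adapted to a thin spherical shell of radius $R$ with tangential extent $\sim R^{1/2}$, and evaluate the variation along a dyadic sequence $u_k$ chosen so that the stationary-phase concentration of $H^{(u_k)}f$ lies on well-separated spatial regions; comparing the resulting $\ell^r$ sum of jumps with $\|f\|_p$ forces $r\ge p'/n$, which contradicts (\ref{var}) whenever $r<p'/n$.
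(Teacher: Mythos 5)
Your skeleton (dyadic long/short splitting, jump functionals, stationary phase, Stein--Tomas, a Seeger-type square function) matches the paper's architecture, but two of the load-bearing steps are wrong as stated. First, you locate the threshold $r>p'/n$ in the long variation: you claim that summing $\rho_{j,k}^{-n/p'}$ with $\rho_{j,k}\sim 2^k\g'(2^j)$ in $\ell^r$ over $k$ ``converges precisely when $r>p'/n$.'' That series diverges for every $r$ at the non-oscillatory end $2^k\g(2^j)\les 1$, where stationary phase gives no decay at all; and in fact the dyadic long-variation estimate holds for all $p\in(1,\infty)$ and $r>2$ with no constraint $r>p'/n$ (the paper proves the stronger $\lambda$-jump inequality there). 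The constraint $r>p'/n$ genuinely comes from the short variation on a single block $u\in[2^k,2^{k+1}]$: the variation seminorm over that block costs a factor $\sim(u\g(2^j))^{1/r}$ (one derivative in $u$, measured in $B^{1/r}_{r,1}$, which embeds \emph{into} $V_r$ --- note your embedding $V_r\hookrightarrow W^{1/r,r}$ points the wrong way for bounding $V_r$), against the stationary-phase/restriction decay $\sim(u\g(2^j))^{-n/p'}$, and the balance $1/r<n/p'$ is exactly the claimed range. Second, you never treat the non-oscillatory regime $u\g(|t|)\les1$, where $H^{(u)}$ is a genuine singular integral and neither stationary phase nor ``exponential gain in $|j-j_0|$'' closes the sum over scales. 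The key device there is that the variation/jump seminorm is unchanged by subtracting a $u$-independent quantity, which lets one replace $e^{iu\g(|t|)}$ by $e^{iu\g(|t|)}-1$ (and, for the dyadic family, replace the truncated singular integral by a tail $\sum_{n\ge k_*}f*\sigma_n$ amenable to the Jones--Seeger--Wright jump theorem). Without this cancellation the sum over scales with $u\g(2^j)\les1$ does not converge, and your ``integration by parts in the $u$ variable'' (the phase is linear in $u$) does not substitute for it.

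On sharpness, a Knapp cap of tangential extent $R^{1/2}$ is not the right geometry for the exponent $n/p'$: the correct example takes $\widehat{f_k}=\psi(2^{-\tilde k}\cdot)$ supported on a \emph{full} annulus of radius $2^{\tilde k}=\g'(2^k)$, so that after two applications of stationary phase $H^{(u)}f_k(x)\approx 2^{-kn}e^{iu\g(|x|)}\tilde\Xi(2^{\tilde k}x,\g(2^k)u)$ on $|x|\sim 2^k$; sampling $u_l=\pi l/\g(|x|)\in[1,2]$ produces $\sim\g(2^k)$ jumps of size $2^{-kn}$, giving a lower bound $\g(2^k)^{1/r}2^{-kn/p'}$ against $\|f_k\|_p\sim(2^{-k}\g(2^k))^{n/p'}$, and letting $k\to\infty$ forces $1/r\le n/p'$. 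The counting of oscillations of $e^{iu\g(|x|)}$ in $u\in[1,2]$, not spatial separation of Knapp tubes, is what produces the factor $\g(2^k)^{1/r}$.
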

Furthermore, we
address question 2
 for  $n\ge2$ and for phase functions  satisfying an additional homogeneous condition\footnote{Indeed, we are handling a slightly broader case, requiring only the satisfaction of condition (\ref{homo}).}
  given by
 \beq\label{homo}
\g(st)\sim \g(s)\g(t), \quad {\rm for\ all}\ s>0,\ t>0.
\eeq
Specifically, the result corresponding to the endpoints of Theorem 1  is as follows:
\begin{thm}\label{endpoint2}
   Let $n\ge 2$,    $\g\in \U$, and suppose that $\g$ satisfies  (\ref{homo}). If $p'\in(2n,\infty)$,
\beq\label{var2}
\big\|V_{p'/n}\big(H^{(u)} f: u\in\R\big)\big\|_{L^{p,\infty}}
\les \|f\|_{L^{p,1}}.
\eeq
\end{thm}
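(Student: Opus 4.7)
The plan is to push the strong-type estimate of Theorem \ref{endpoint} down to the sharp endpoint variation exponent $r = p'/n$, at the price of passing from strong type to restricted weak type, using the homogeneity condition \eqref{homo} in an essential way. The main ingredients will be a jump-function reformulation of the variation norm, a sharp single-scale decay estimate made possible by the scaling structure afforded by \eqref{homo}, and real interpolation between two strong-type estimates straddling the endpoint line.

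First, using the equivalence $V_r \approx \sup_{\lambda > 0} \lambda\, N_\lambda^{1/r}$ between the variation norm and the jump counting function, one reduces \eqref{var2} to uniform restricted weak-type bounds on $N_\lambda$. Then, performing the same Littlewood--Paley decomposition $H^{(u)} = \sum_{k \in \Z} H_k^{(u)}$ that underlies the proof of Theorem \ref{endpoint}, one splits the variation into long and short contributions across $k$. For each $k$, revisiting the stationary-phase, Seeger-type square function, and Stein--Tomas restriction arguments used for Theorem \ref{endpoint} in combination with \eqref{homo} yields an approximate scaling identity of the form $H^{(u)}\bigl(f(\cdot/s)\bigr)(x) \approx H^{(u\g(s))} f(x/s)$ (coming from $\g(st) \sim \g(s)\g(t)$ and the $(-n)$-homogeneity of $K$), which transplants every frequency piece $H_k^{(u)}$ onto a universal single-scale model. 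This allows one to track the single-scale constants sharply, yielding an endpoint decay $\|V_r(H_k^{(u)} f)\|_{L^p} \lesssim C_{k,r} \|f\|_{L^p}$ whose behaviour as $|k| \to \infty$ and as $r \downarrow p'/n$ is precise enough to survive interpolation.

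The conclusion of Theorem \ref{endpoint2} would then follow by real interpolation. One chooses exponents $p_0 < p < p_1$ with $p_i' \in (2n,\infty)$ and $r_i > p_i'/n$; Theorem \ref{endpoint} supplies the strong-type bounds $\|V_{r_i}(H^{(u)} f)\|_{L^{p_i}} \lesssim \|f\|_{L^{p_i}}$ for $i=0,1$. Real interpolation on the Lorentz scale of inputs and outputs, combined with the sharp single-scale decay described above, closes the otherwise logarithmic gap at the endpoint and delivers the restricted weak-type inequality $L^{p,1} \to L^{p,\infty}(V_{p'/n})$ at the intermediate exponent $p$.

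The principal obstacle is the sharp single-scale estimate: without \eqref{homo}, the constants $C_{k,r}$ in the dyadic bounds do not decay quickly enough as $r \downarrow p'/n$ to permit summation; with \eqref{homo}, however, each dyadic piece is rescaled to a single universal model, so the constants carry precise scaling information rather than merely up-to-logarithm behaviour, and the summation survives in the Lorentz scale. Translating this qualitative picture into a quantitative decay estimate robust enough to interpolate exactly onto $r = p'/n$ is expected to be the most delicate step, and is presumably what the abstract refers to as the \emph{crucial observation enabling useful decay estimates near endpoints}.
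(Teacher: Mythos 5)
Your overall intuition --- that the endpoint must come from an interpolation argument applied to dyadic pieces whose bounds are made ``sharp'' by the homogeneity of $\g$ --- is in the right spirit, but two of your concrete steps would fail. First, the reduction of $V_{p'/n}$ to the jump function via the claimed equivalence $V_r\approx\sup_\la \la N_\la^{1/r}$ is not valid: only the inequality $\la N_\la^{1/r}\les V_r$ holds, and recovering $V_r$ from jump counts costs an $\e$ in the variation exponent, which is exactly what you cannot afford at the endpoint. The paper uses jump inequalities only for the long variation (Proposition \ref{long0}, valid for all $r>2$); the endpoint short variation is handled directly in Besov norms. Second, and more seriously, real interpolation between the two strong-type bounds of Theorem \ref{endpoint} at exponents $(p_0,r_0)$ and $(p_1,r_1)$ with $r_i>p_i'/n$ can never reach the line $1/r=n/p'$: the admissible region is open and convex in $(1/p,1/r)$, so any interpolant of interior points is again interior. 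To land on the critical line you must interpolate estimates that \emph{straddle} it, i.e.\ fix $r$, decompose the short variation as $\sum_{l\ge1}\mathcal{G}_{l,m}$, and use the single-piece bound $\|\mathcal{G}_{l,m}f\|_p\les 2^{(m+l)(1/r-n/p')}\|f\|_p$, whose exponent is positive for $p<(nr)'$ and negative for $p>(nr)'$; Bourgain's interpolation trick (summing a family with geometric decay on one side and geometric growth on the other) then yields restricted weak type exactly at $p=(nr)'$. This is a different mechanism from interpolating two global strong-type bounds.

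Finally, the role of condition (\ref{homo}) in your sketch is not the one that actually works. The ``approximate scaling identity'' $H^{(u)}(f(\cdot/s))\approx H^{(u\g(s))}f(\cdot/s)$ is only an equivalence up to unbounded multiplicative constants in the phase (since $\g(st)\sim\g(s)\g(t)$ is not an identity), so it does not conjugate the operator onto a universal model. What (\ref{homo}) really buys is the observation that the auxiliary parameter $m$ in the decomposition $\Z=\cup_{m\in\mathcal{P}}S_{l,m}$ satisfies $2^{m+l}\sim\g(2^l)$, i.e.\ $|m-a_l|\les1$: the a priori two-parameter family $(l,m)$ (with $\#\mathcal{P}\les l$ values of $m$ per $l$) collapses to a single-parameter family indexed by $l$, which is precisely the structure Bourgain's trick requires. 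Without this collapse the sum over $m$ destroys the geometric behaviour and the endpoint is lost.
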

\vskip.1in
%\begin{remark}\label{rr1}
Let us make some comments on our main results.
\begin{itemize}
%\item After suitable adjustments, our approach is also applicable to curves that satisfy a less restrictive condition $\g(\R^+)=I$ with $I\subset \R^+$. However,
    %we opt not to explore this avenue further to enhance the overall presentation of the paper.
\item
For $n\geq 2$ and $\g\in\mathcal{U}$, we can establish the following  jump inequality:
\begin{equation}\label{jump100}
\sup_{\lambda>0} \big\|\lambda \sqrt{N_\lambda(H^{(u)} f: u\in\mathbb{R})}\big\|_p \lesssim \|f\|_p,
\quad {2n}/{(2n-1)} < p < \infty
\end{equation}
(see $Remark$ \ref{amz} in Subsection \ref{sub6.7} below for further details).

 \item The set $\U$ encompasses numerous interesting examples, as illustrated in $Example$ \ref{ex1} below. Particularly, our result in Theorem \ref{endpoint} coincide with that in \cite{GRY20} when we choose $\g(|s|)$ as a homogeneous function $|s|^\A$ with $\A>1$. Furthermore, in the case of $\g(|s|)=|s|^\A$ with $0<\A<1$, Theorem \ref{endpoint} gives an affirmative   answer to a question raised in \cite{GRY20} (see page 1458 there).\footnote{Through suitable adjustments to the techniques outlined in this paper, we can also handle the situation where $\g(|s|) = |s|^\A$ with $\A<0$, a scenario not encompassed by the set $\U$.}

\item
The method employed to demonstrate Theorem \ref{endpoint} appears to face difficulties when applied to the case of $n=1$. In contrast to homogeneous phase functions for $n=1$, the generic  phase functions, especially in  the non-homogeneous case, may require the establishment of local smooth estimates with variable coefficients (see $Remark$ \ref{r1d} in Subsection \ref{sub6.7}). At present, establishing this type of local smooth estimates without imposing additional conditions is  challenging.

 \item
Theorem \ref{endpoint2} represents the initial occurrence of an endpoint result, albeit with the requirement that the phase functions satisfy  (\ref{homo}). We conjecture that a similar endpoint result holds for $n=1$ at least in the case of $\g(|s|)=|s|^\A$ with $\A>1$. Nevertheless, the methodology employed in this context, which heavily depends on an interpolation trick by Bourgain, is not applicable in the case of $n=1$.

 \end{itemize}
%\end{remark}

\begin{example}\label{ex1}

We enumerate some intriguing instances from the set $\U$. Due to the characteristics of the phase function, we just present their expressions within the domain $\mathbb{R}^+$.
\begin{itemize}

\item $\g(s)=s^\A$ with $\A>0$ and $\A\neq 1$;

\item  $\g(s)=\sum_{i=1}^ks^{\A_i}\ln^{\beta_i}(e+s)$, where $k\in \N$, $\A_i>1$ and $\beta_i\ge 0$ for all $ i=1,2,\cdots, k$;

\item $\g(s)=\sum_{i=1}^ks^{\A_i}$, where $k\in \N$, and $\A_i\in (0,1)$ for all $i=1,2,\cdots, k$;

\item  $\g(s)=\sum_{i=1}^k\big((1+s^2)^{\A_i}-1\big)$, where $k\in \N$, and  $\A_i>1$ for all $i=1,2,\cdots, k$.

    \end{itemize}
\end{example}
\begin{figure}\label{fig}
\begin{tikzpicture}[xscale=7, yscale=7]\label{dd}
\path [fill=white] (0,0)--(0,1/2)--(3/4,1/2)--(1,0)--(0,0);
\draw [thick, <->] (0,0.6)--(0,0)--(1.2,0);
\node [below right] at (1.1,0) {${1}/{p}$};
\node [left] at (0,0.6) {${1}/{r}$};
\draw [ thick] (0,1/2)--(3/4,1/2);
\draw [ thick] (3/4,1/2)--(1,0);
\draw [fill] (1/4,0) circle [radius=0.2pt];
\node [below] at (1/4,0) {$D$};
\node [above] at (1/4,1/2) {$B$};
\draw [ thick] (1/4,0)--(1/4,1/2);
%\draw [fill] (3/4,0) circle [radius=0.3pt];
%\node [below] at (3/4,0) {$E$};
\node [below left] at (3/4,1/8) {$E$};
\draw [ thick] (3/4,1/8)--(3/4,0.5);
\draw [  thick] (3/4,1/8) --(1,0);
\draw [fill] (1,0) circle [radius=0.2pt];
\draw [fill] (0,1/2) circle [radius=0.2pt];
\draw [fill] (3/4,1/8) circle [radius=0.2pt];
\draw [fill] (1/4,1/2) circle [radius=0.2pt];
\draw [fill] (3/4,1/2) circle [radius=0.2pt];
\draw [fill] (0,0) circle [radius=0.2pt];
\node [below ] at (1,0) {$F(1,0)$};
\node [below] at (0,0) {$O(0,0)$};

\node [right] at (1,3/8) {${\rm Endpoints}$};
\node [right] at (1,3/10) {$1/r=n/p'$};
\draw [thick, ->] (7/8,1/4)--(1,3/9);

%\node [right] at (1/8,1/4) {${\rm I}$};

\node [left] at (0,1/2) {$A(0,1/2)$};
%\node [above] at (1/6,1/2) {$B'$};

\node [above] at (3/4,1/2) {$C$};

\end{tikzpicture}
\caption{
$B(\frac{n-1}{2n+2},\frac{1}{2}),~
C(\frac{2n-1}{2n},\frac{1}{2}), ~D(\frac{n-1}{2n+2},0),~ ~E(\frac{2n-1}{2n},\frac{1}{2n}). %B'~ (\frac{n}{2n+4},\frac{1}{2}),\  D'~ (\frac{n}{2n+4},0).
$
}
\label{fig:1}
\end{figure}
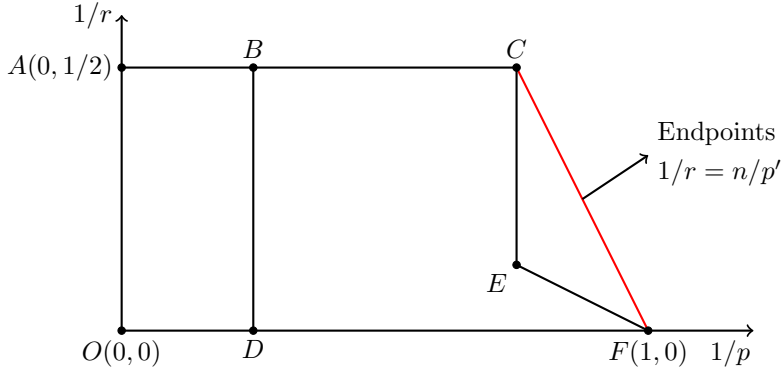
%\noindent{\bf Methodology.}
\subsection{Strategies and novelties} \label{meth}

Let ${\bf G}$ denote the enclosed area delineated by the trapezoid AOFC in Figure \ref{fig:1}. We define ${\bf G}_1$ as the interior of  ${\bf G}$. Consequently, the domain for $(p, r)$ in Theorem \ref{endpoint} corresponds to ${\bf G}_1$; and
%\footnote{In fact, the case $r=\infty$ directly follows from any case of $r<\infty$.}
in the context of Theorem \ref{endpoint2}, the domain for $(p, r)$ is restricted  to $\overline{\mathrm CF}$ with the exclusion of two endpoints $C$ and $F$.
The initial step in proving both (\ref{var}) in Theorem \ref{endpoint} and (\ref{var2}) in Theorem \ref{endpoint2} involves reducing the objective to the long and short variation-norm estimates.
  \begin{itemize}

  \item

Strategies for proving (\ref{var}) in  Theorem \ref{endpoint}: The long and short variation-norm estimates are formulated in (\ref{ka1}) in Proposition \ref{long0} and (\ref{ka2}) in Proposition \ref{short}, respectively. Rather than directly demonstrating (\ref{ka1}), our approach involves establishing its associated jump inequality, as indicated in (\ref{ka}) below. This jump inequality is derived by leveraging a known result from \cite{JSW08}, making essential modifications to the $\lambda$-jump function, and deriving $L^p$ decay estimates for a square function.
In contrast to the proof of the long variation-norm estimate, the proof of the short variation-norm estimate (\ref{ka2}) is significantly more intricate. We present a stronger estimate within a Besov space (see Subsection \ref{othef} below) instead of directly demonstrating (\ref{ka2}). Furthermore, we establish the desired estimates for two subranges of $(p,r)$ using different techniques and achieve the final goal through interpolation. Precisely, by referencing Figure \ref{fig:1},
the aforementioned   two subranges correspond to the interior of the rectangle AODB  and the interior of the triangle CEF.
\smallskip

\item

Strategies for establishing the necessity of $r\ge p'/n$
 in  Theorem \ref{endpoint}: As in the arguments presented in \cite{GRY20} for  special homogeneous phase functions, the arguments  for generic functions  heavily relies on the stationary phase theorem as well. However, the generality of the phase function makes the proof more intricate.
More precisely, we will address two aspects. The first involves the specific properties of the phase function on which the lower bound ultimately depends. The second focuses on ensuring that the phase functions of two oscillatory integrals have  critical points when applying the stationary phase theorem (see Section \ref{sharp}).

% and prove these parts by using different techniques.
%employ many techniques
 %such as the method of stationary phase, a square function estimate derived from Seeger's arguments, a Stein-Tomas restriction estimate, and other relevant methodologies.
\smallskip

 \item
Strategies for proving (\ref{var2}) in Theorem \ref{endpoint2}: This theorem delivers the endpoint result, specifically providing a restricted weak-type variation-norm estimate in Theorem \ref{endpoint2}. The desired long variation-norm estimate is established by Proposition \ref{long0}, while the desired short variation-norm estimate is derived by combining partial arguments from the proof of Proposition \ref{short}, the additional homogeneous condition (\ref{homo}) for phase functions, and an interpolation trick from Bourgain.
%In particular, the condition (\ref{homo}) plays a crucial role, and the arguments employed in proving Theorem \ref{endpoint2} seem challenging to extend to generic phase functions.
\end{itemize}

We next discuss the novelties and challenges encountered in the proof of our main results. Specifically, the key novelties and difficulties primarily arise in establishing the short variation-norm estimate.

\begin{itemize}

\item
In dealing with generic phase functions $\g$ in this context, it's important to note that these functions may not necessarily exhibit the  property $\g(st)=\g(s)\g(t)$. As a result, the proofs for the corresponding long and short variation-norm estimates become more intricate compared to the case of homogeneous phase functions. Similar complexities have been observed in the studies involving associated maximal operators (see, for example, \cite{LY22,LSY21,LYu22}, which inspired me to consider more general phase functions), analogous to the endpoint case discussed here with $r=\infty$. However, the scenario we are addressing encompasses the range $r\in(2,\infty)$, introducing an additional layer of complexity to the problem. This is because we now have to establish corresponding estimates with faster decay.

\smallskip

 \item

The generality of  phase functions necessitates the establishment of variable coefficient square function estimates, which presents considerable complexity.  Moreover, these estimates may impose additional constraints on the phase function $\g$. To address this difficulty,  we adopt an alternative approach incorporating the method of stationary phase, a square function estimate derived from Seeger's arguments, and Stein-Tomas restriction estimates. In fact,  we only employ certain conditions related to  non-zero Gaussian curvature, without the necessity to distinguish scenarios involving both positive and negative curvature simultaneously.
%It's worth noting that in the case of $n=1,$ where Stein-Tomas restriction estimates are not applicable, we employ a distinct argument as an alternative.

\smallskip

\item
We will make use of a crucial observation. More precisely, for $(p,r)$ in the interior of the triangle CEF in Figure \ref{fig:1}, the desired estimate (\ref{Go2}) from Proposition \ref{short2} can be obtained through basic interpolations and  pointwise estimates provided by (\ref{decy}) for an oscillatory integral with non-vanishing Gaussian curvature. Consequently, there is no need to rely on the maximal estimate as a black box, and we can establish the endpoint result in Theorem \ref{endpoint2}.

\end{itemize}

%directly applying the procedure for building the  maximal estimates for  the classical operator in
%previous works (see e.g., \cite{SW21,LY22,LV20}) to establish  the desired  estimate for the   maximal  operator $\mathcal{M}_X$ is challenging.
%Of course, by the arguments in this paper, we can also establish the corresponding $L^p$ estimate for the maximal operator $\mathcal{M}_Xf(x)=\sup_{u\in\R}|\mathfrak{T}^{(u)}f (x)|$.
%\vskip.1in
\subsection{Organization of the paper.} \label{organ}
In Section \ref{pres2}, we establish several valuable estimates applicable to the function $\g$ within $\mathcal{U}$, provide comparisons between the space $V_r$ and certain Besov spaces, and present a fundamental decomposition for $H^{(u)}f$. In Section \ref{redut1}, we reduce  the proof of (\ref{var})  in   Theorem \ref{endpoint} to proving Propositions \ref{long0} and \ref{short}, which respectively provide
 the desired long and short variation-norm estimates. In Section \ref{long},
  we  demonstrate Proposition \ref{long0}. In Section \ref{s5p}, we reduce the proof of Proposition \ref{short} to proving Lemmas \ref{l1} and \ref{l2},
  and then prove Lemma \ref{l1}.
In Section \ref{Pftheorem1},  we prove Lemma \ref{l2} by establishing Propositions \ref{short2} and \ref{short1}. In Section \ref{sargu}, we give  the proof of Lemma \ref{6l} which plays a crucial role in proving Lemma \ref{l2}.
  In Section \ref{sharp}, we show
  the sharpness of (\ref{var}) as presented in Theorem \ref{endpoint}.
  In Section \ref{weakht}, we prove
  Theorem \ref{endpoint2}, which presents
  an
  endpoint result for  phase functions satisfying the condition (\ref{homo}). 
  %Finally, we prove  Theorem \ref{application} in Section\ref{v11}.

\subsection{Notation.} \label{Not}
   For any two quantities $x,y$ we will write $x\lesssim y$ and $y\gtrsim x$ to denote   $x\le Cy$ for some absolute constant $C$. If we need the implied constant $C$
to depend on additional parameters,  we will denote this by subscripts. For example $x\les_\rho y$ denotes
$x\les C_\rho y$ for some $C_\rho$ depending on $\rho$.
 If both $x\lesssim y$ and $x\gtrsim y$ hold, we use $x\sim y$. To abbreviate the notation we will sometimes permit the implied constant to depend on certain fixed parameters when the issue of uniformity with respect to such parameters is not of relevance.
  We use
$\mathcal{F}\{f\}$ or $\widehat{f}$ to represent   the Fourier transform of a function $f$, and $\mathcal{F}^{-1}\{g\}$ or $\check{g}$ is the Fourier inverse transform of $g$. More precisely,
$$\mathcal{F}\{f\}(\xi)=\hat{f}(\xi)=\int_{\R^n} f(x)e^{-i\xi\cdot x} dx,\ \  \mathcal{F}^{-1}\{g\}(x)
=\check{g}(x)=(2\pi)^{-n}\int_{\R^n} g(\xi)e^{-i\xi\cdot x} d\xi.$$
For convenience, we omit the constant $(2\pi)^{-d}$ in  the  Fourier inverse transform  throughout  this paper. For $S\subset \Z$, we utilize
$\|a_k\|_{\ell^r(k\in S)}:=(\sum_{k\in S}|a_k|^r)^{1/r}$ if $r<\infty$ and $\|a_k\|_{\ell^\infty(k\in S)}:=\sup_{k\in S}|a_k|$;
 we write $\|a_k\|_{\ell^r_k}:=\|a_k\|_{\ell^r(k\in \Z)}$ in particular.
 Moreover, $\ind{E}$ denotes the indicator function of a set $E$. If $P$ is a statement, we use $\ind{P}$ to denote the indicator function, equal to 1 if $P$ is true and 0 if $P$ is false
 %and   $|S|$   represents the  Lebesgue measure of the set $S$.
%apply
%$\|\cdot\|_p$ to stand for $\|\cdot\|_{L^p(\R^n)}$ in some places of this paper.

Besides, throughout   this paper we fix a cutoff function $\psi:\R\to [0,1]$,
%and $\psi_\circ:\R\to [0,1]$,
which   is supported  in $\pm[1/2,2]$,
%,  and $\psi_\circ$ equals one on supp$\psi$.
and set $\psi_l(t):=\psi(2^{-l}|t|)$ for any $l\in\Z$ such that  the partition of unity $\sum_{l\in\Z}\psi_l(t)=1$ holds for all $t\in \R^n\setminus \{0\}$. For each $j\in\Z$, we denote by $P_j$ the projection on $\R^n$ defined by
%Littlewood-Paley decomposition $f=\sum_{j\in\Z}P_j f$, where
 %$\{P_j\}_{j\in\Z}$ are the usual  Littlewood-Paley projections on $\R^n$ with
$\widehat{P_jf}(\xi)=\psi_j(\xi)\widehat{f}(\xi)$; we use $\tilde{P}_{j}$ which may vary line by line  to denote the  variant of the Littlewood-Paley operator ${P}_{j}$.% whenever $j\in\Z$.
 %%
 % \subsection{\bf Acknowledgements.}
% \section*{Acknowledgements}
 %The author would like to thank  Prof. Jiecheng Chen and Prof. Meng Wang for discussions related to the topic of this paper.
 %This work was supported by the NSF of China 11901301.
 %%
\section{Preliminaries}
\label{pres2}
\subsection{Useful estimates on  $\g$}
\label{Fursub1}
In this subsection, we  investigate some further estimates with respect to  the function $\g\in\U$. (One can see \cite{LY22,LSY21,LYu22} for  similar estimates.)  Initially, we can deduce there are two positive constants, $\A_0$ and $\A_1$, such that for all $l\in[0,\infty)$ and all $k\in\R$,
\begin{equation}\label{p8}
2^{\A_0 l} \le \frac{\g(2^{k+l})}{\g(2^{k})} \le 2^{\A_1 l}.
\end{equation}
Indeed, by leveraging the conditions (\ref{curv}) and (\ref{smooth}) with $j=1$ and $\g'>0$ on $\R^+$, we observe the inequality  $\mathfrak{C}_{1,1} s^{-1}\le (\ln\g)'(s)\le \mathfrak{C}_{2,1}s^{-1}$. This   immediately yields (\ref{p8}) when integrating this inequality over the interval $[2^k,2^{k+l}]$.
% and performing a routine calculation.
Moreover, by utilizing (\ref{curv}) with $j=2$, we can deduce that one of the following statements holds:
\vspace{2pt}

(S1) The function $\g$ is convex on $\R^+$,  meaning $\g''(s)>0$ for all $s\in \R^+$;
\vspace{2pt}

 (S2) The function $\g$ is concave on $\R^+$,  meaning $\g''(s)<0$ for all $s\in \R^+$.
 \vspace{2pt}

  Similarly to the preceding  proof of (\ref{p8}), we can also obtain  that there are two positive constants, $\A_0'$ and  $\A_1'$, such that for all $l\in[0,\infty)$ and all $k\in\R$,
\begin{align}
2^{\A_0'l} \le&~ \frac{\g'(2^{k+l})}{\g'(2^{k})}\le~ 2^{\A_1'l}\ \ \ \ \hspace{1.5pt} {\rm if\ (S1)\ holds,\ and} \label{p80}\\
2^{-\A_1'l} \le&~ \frac{\g'(2^{k+l})}{\g'(2^{k})}\le~ 2^{-\A_0'l} \ \ \ {\rm if\ (S2)\ holds}. \label{p81}
\end{align}

Below, we  establish estimates related to the inverse function of $\g'$.
\begin{lemma}\label{lpro}
Let   $\gamma:=(\g')^{-1}$ where $\g\in \U$. Then, there exist two positive constants $\beta_0$ and $\beta_1$ such that for all $l\in[0,\infty)$ and $k\in\R$,
%$$\frac{|\gamma(2t)|}{|\gamma(t)|}\sim 1$$
\begin{align}
2^{\beta_0l}\le&~ \frac{\gamma(2^{k+l})}{ \gamma(2^k)}\le~2^{\beta_1l} \ \  \ \hspace{5pt} {\rm if\ (S1)\ holds,\ and}\label{1121}\\
2^{-\beta_1 l}\le&~ \frac{\gamma(2^{k+l})}{ \gamma(2^k)}\le~2^{-\beta_0l} \ \  \ {\rm if\ (S2)\ holds}.
\label{1122}
\end{align}
\end{lemma}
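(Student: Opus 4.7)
The plan is to invert the doubling estimates (\ref{p80}) and (\ref{p81}) for $\g'$ into the corresponding doubling estimates for its inverse $\gamma$, with candidate exponents $\beta_0 = 1/\A_1'$ and $\beta_1 = 1/\A_0'$. The key point is that (\ref{p80}) and (\ref{p81}) say that $\g'$, measured on a logarithmic scale, behaves between a power $s^{\A_0'}$ and a power $s^{\A_1'}$ (up to orientation), so its inverse must behave between $s^{1/\A_1'}$ and $s^{1/\A_0'}$.

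First I would observe that under either (S1) or (S2), the function $\g'$ is strictly monotone on $\R^+$, and iterating (\ref{p80}) or (\ref{p81}) shows that $\g'$ attains arbitrarily large and arbitrarily small positive values. Hence $\g'$ is a strictly monotone bijection of $\R^+$ onto $\R^+$, so $\gamma = (\g')^{-1}$ is well-defined at every $2^k$ for $k\in\R$.

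Next, for fixed $k\in\R$ and $l\ge 0$, I would introduce $a := \gamma(2^k)$ and $b := \gamma(2^{k+l})$, so that $\g'(a)=2^k$, $\g'(b)=2^{k+l}$, and $\g'(b)/\g'(a) = 2^l$. In case (S1), monotonicity of $\g'$ forces $b\ge a$, and I would write $b = 2^m a$ with $m\ge 0$ and apply (\ref{p80}) at the base point $a$ with increment $m$ to conclude $2^{\A_0' m} \le 2^l \le 2^{\A_1' m}$, i.e.\ $l/\A_1' \le m \le l/\A_0'$, which is precisely (\ref{1121}) after recalling $b/a = \gamma(2^{k+l})/\gamma(2^k)$. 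In case (S2), $\g'$ is decreasing so $b\le a$; writing $b = 2^{-m} a$ and applying (\ref{p81}) at the base point $b$ (with the shifted point $a$) yields $2^{-\A_1' m} \le 2^{-l} \le 2^{-\A_0' m}$, and hence (\ref{1122}) with the same exponents.

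No substantive obstacle is anticipated; the lemma is essentially a change of variables from $\g'$-coordinates to $\gamma$-coordinates. The only minor care needed is to preserve the sign convention $l \ge 0$ required by (\ref{p80})/(\ref{p81}) when passing to the inverse, which is automatic from the way $m$ is chosen in each case.
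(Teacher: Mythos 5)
Your proof is correct and follows essentially the same route as the paper: both arguments pass to the preimages under $\g'$ (your $a,b$ are the paper's $2^{w_k},2^{w_{k+l}}$), use the sign of $\g''$ to fix the orientation of the increment, and then invert (\ref{p80}) or (\ref{p81}) to trap that increment in $[l/\A_1',\,l/\A_0']$, giving $\beta_0=1/\A_1'$ and $\beta_1=1/\A_0'$. Your preliminary remark that $\g'$ is a bijection of $\R^+$ onto $\R^+$ (so that $\gamma(2^k)$ is defined for every real $k$) is a small point the paper leaves implicit, but it does not change the argument.
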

\begin{proof}
  For every $l\ge0$ and  $k\in\R$, we define two real numbers
  $w_{k+l}$ and $w_k$   by $2^{w_i}:=\gamma(2^i)>0$,
$i=k+l,k$. Then
${\g'(2^{w_{k+l}})}/{\g'(2^{w_k})}=2^l$.  First we establish   (\ref{1121}). Since $\g''>0$ on $\R^+$, we have
  $w_{k+l}>w_k$. From (\ref{p80}) we can see
$$2^{\A_0'({w_{k+l}}-{w_k})}\le \frac{\g'(2^{w_{k+l}})}{\g'(2^{w_k})}\le 2^{\A_1'({w_{k+l}}-{w_k})}.$$
This inequality, combined with  ${\g'(2^{{w_{k+l}}})}/{\g'(2^{w_k})}=2^l$, implies  ${w_{k+l}}-w_k\in [l/\A_1',l/\A_0']$, which establishes
  (\ref{1121}) with $\beta_j=1/\A_j'$, $j=0,1$. Similarly, by a parallel approach, we can deduce (\ref{1122}) from (\ref{p81}). The only distinction lies in ${w_{k+l}}<w_k$, and in this case, ${w_{k+l}}-w_k\in [-l/\A_0',-l/\A_1']$.
\end{proof}
For any $\bar{k}\in\Z$, we define a specific mapping ${\Phi}:\Z\to \Z$ as follows:
\begin{equation}\label{star1}
{\Phi}(\bar{k})=\sup\{k\in\Z:\ 2^{\bar k}\g(2^k)\le1\}.
\end{equation}
 (For example, ${\Phi}(\bar{k})=[-\bar k/\alpha]$ when $\Gamma(|t|)=|t|^\alpha$.) Since our main results are associated with general radial phase functions, the introduction of this mapping is necessary.
For convenience, in the subsequent proof of our main results, we frequently employ the following notation:
  \beq\label{brie}
  \bar{k}_*:={\Phi}(\bar{k})\quad{\rm whenever}\quad \bar{k}\in\Z.
  \eeq
  Clearly, we have $2^{\bar k}\g(2^{\bar{k}_*+1})>1$.
 Using this inequality and  (\ref{p8}) we then deduce by a simple computation that  for all $l\in [0,\infty)$ and  $k\in\R$,
   \beq\label{p9}
   \begin{aligned}
2^{-\A_1 l-\A_1}\le&\   2^{\bar{k}}\g(2^{\bar{k}_*-l})\le\  2^{-\A_0 l}\quad{\rm and}\\
2^{\A_0 l-\A_1} \le&\  2^{\bar{k}}\g(2^{\bar{k}_*+l})\le\  2^{\A_1 l}.
\end{aligned}
\eeq
In particular, $2^{-\A_1}\le  2^{\bar{k}}\g(2^{\bar{k}_*})\le 1$.

Next, we present some basic estimates related to maps that satisfy a specific sparse condition. In particular, the map $\Phi$ introduced above satisfies this sparse condition.
\begin{lemma}\label{l2.1}
Suppose that the map $\phi:\Z\to \Z$ satisfies
 the sparse condition: There exists a positive  integer $n_0$ such that $|\phi(j_1)-\phi(j_2)|>1$ for any pair $(j_1,j_2)\in\Z^2$ with $|j_1-j_2|\ge n_0$.  Under this assumption,  the following estimates hold:\\
(i)
Let   $\mathcal{S}_m:=\{j\in\Z:\ \phi(j)\le m\}$ with $m\in \Z$.
Then for any $\e>0$, we have
\beq\label{sum1}
\sum_{j\in\mathcal{S}_m} 2^{\e\phi(j)}
\les_\e n_0 2^{m\e};
\eeq
(ii) For every $p\in[1,\infty)$, we have a variant of the Littlewood-Paley inequality
\beq\label{ineq1}
\|\big(\sum_{j\in\Z}|P_{\phi(j)}f|^2\big)^{1/2}
\|_{L^p}\les n_0 \|f\|_{\HH^p},
 \eeq
 where  $\mathbb{H}^p:=\mathbb{H}^p(\R^n)$  represents the  Hardy space.
\end{lemma}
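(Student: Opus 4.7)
The plan is to first extract the key combinatorial consequence of the sparse condition. Specifically, for any integer $k$, any two elements $j_1,j_2\in \phi^{-1}(\{k\})$ satisfy $|\phi(j_1)-\phi(j_2)|=0\le 1$; the contrapositive of the sparse condition therefore forces $|j_1-j_2|<n_0$. Hence each fiber $\phi^{-1}(\{k\})$ lies in an interval of length $n_0$, so that $|\phi^{-1}(\{k\})|\le n_0$ uniformly in $k$. This single observation drives both parts of the lemma.

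For (i), I would simply regroup the sum by the value of $\phi$: writing
\[
\sum_{j\in \mathcal{S}_m} 2^{\e\phi(j)}=\sum_{k\le m}|\phi^{-1}(\{k\})|\,2^{\e k}\le n_0\sum_{k\le m}2^{\e k},
\]
the geometric series on the right is bounded by $C_\e 2^{\e m}$, which is exactly \eqref{sum1}.

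For (ii), I would apply the same regrouping pointwise in $x$:
\[
\sum_{j\in\Z}|P_{\phi(j)}f(x)|^{2}=\sum_{k\in\Z}|\phi^{-1}(\{k\})|\,|P_{k}f(x)|^{2}\le n_0\sum_{k\in\Z}|P_{k}f(x)|^{2}.
\]
Taking square roots, applying the $L^{p}$ norm, and invoking the classical Littlewood--Paley characterization of Hardy space,
\[
\Bigl\|\Bigl(\sum_{k\in\Z}|P_{k}f|^{2}\Bigr)^{1/2}\Bigr\|_{L^{p}}\lesssim \|f\|_{\HH^{p}},\qquad 1\le p<\infty,
\]
yields the bound with constant $n_{0}^{1/2}\le n_{0}$ (since $n_{0}\ge 1$ is a positive integer), giving \eqref{ineq1}.

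No substantive obstacle is expected; the only point requiring a standard reference is the Littlewood--Paley square-function characterization of $\HH^{p}$ at the endpoint $p=1$, which is classical. The entire argument rests on the fiber bound $|\phi^{-1}(\{k\})|\le n_0$ derived from the sparse condition.
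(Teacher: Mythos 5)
Your proposal is correct. For part (i) your argument is essentially the paper's: both reduce to the observation that the sparse condition bounds the number of $j$ with $\phi(j)$ equal to (or within distance $1$ of) a fixed value by $n_0$, and then sum the resulting geometric series; you organize this by fibers $\phi^{-1}(\{k\})$ while the paper groups by $l-1\le\phi(j)<l$, which for integer-valued $\phi$ is the same thing.

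For part (ii) you take a genuinely different and somewhat slicker route. The paper splits $\Z$ into the $n_0$ residue classes $S_d=\{j:j\equiv d\ (\mathrm{mod}\ n_0)\}$, notes that on each class the sparse condition forces the $\phi(j)$ to be pairwise distinct so that the partial square function is dominated by the full Littlewood--Paley square function, and then sums the $n_0$ contributions via Minkowski's inequality, yielding the constant $n_0$. You instead use the pointwise regrouping $\sum_j|P_{\phi(j)}f(x)|^2=\sum_k|\phi^{-1}(\{k\})|\,|P_kf(x)|^2\le n_0\sum_k|P_kf(x)|^2$, which follows from the fiber bound $|\phi^{-1}(\{k\})|\le n_0$ (itself a correct consequence of the contrapositive of the sparse condition) and nonnegativity of the terms. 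Both arguments then invoke the same classical square-function characterization of $\HH^p$. Your version avoids the Minkowski step entirely and even gives the marginally better constant $n_0^{1/2}$; the paper's residue-class decomposition is the more standard device and would still work if one only knew an $L^p$ (rather than pointwise) comparison for each subfamily, but for this lemma your pointwise counting is perfectly adequate.
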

 Clearly, the $\HH^p$ norm on the right-hand side of (\ref{ineq1})  can be replaced by the $L^p$ norm
 whenever $p\in (1,\infty)$. Moreover, for any $q\in\Z$, 
 \eqref{ineq1} holds with $\phi(j)$ replaced by  $\phi(j)+q$. 
\begin{proof}
%Decompose $\mathcal{S}_m$ as $\mathcal{S}_m=\cup_{l\le m}\mathcal{S}_{m,l}$, %where
%$$\mathcal{S}_m=\cup_{l\le m}$$
($i$) By a routine calculation, we can bound the left-hand side of (\ref{sum1}) by
\beq\label{2.7}
\sum_{l\le m}\sum_{l-1\le \phi(j)<l}2^{\phi(j)\e}
\le \sum_{l\le m}2^{l\e}\sum_{l-1\le \phi(j)<l}1,
\eeq
where $\e>0$ was used.
Because $\phi$ satisfies  the sparse condition,  the second sum on the right-hand side of (\ref{2.7}) is bounded  by the integer $n_0$. Then   (\ref{sum1}) follows.\\
($ii$)   We define $S_{d}:=\{j\in\Z:\ j\equiv d~ (\text{mod} ~n_0)\}$ for $d=0,\cdots,n_0-1$. By
Minkowski's inequality,
\beq\label{2.8} \|\big(\sum_{j\in\Z}|P_{\phi(j)}f|^2\big)^{1/2}
\|_{L^p}\le \sum_{d=0}^{n_0-1}\|\big(\sum_{j\in S_d}|P_{\phi(j)}f|^2\big)^{1/2}
\|_{L^p}.
\eeq
Given that $\phi$ satisfies the sparse condition, we can infer that the $\ell^2$ sum on the right-hand side of (\ref{2.8}) is bounded by $\|P_jf\|_{\ell^2_j}$. Consequently, (\ref{ineq1}) follows from the application of the Littlewood-Paley theory on the Hardy space.
\end{proof}
Note that the above $\alpha_0,\alpha_1,\alpha_0',\alpha_1',\beta_0,\beta_1$ and $n_0$ (in the application of Lemma \ref{l2.1})  depend only on $\{\mathfrak{C}_{1,j},\mathfrak{C}_{2,j}\}_{j=1}^2$.
\subsection{$V_r$ and related function spaces}\label{othef}
From the Plancherel-P\'{o}lya inequality \cite{PP36,PP37}, it can be observed that for all $r\in [1,\infty)$, $B_{r,1}^{1/r}\hookrightarrow V_r\hookrightarrow B_{r,\infty}^{1/r},$ where the notation $B_{p,q}^s$ represents the inhomogeneous Besov space (see  \cite{Gra14,BCD}).
By utilizing the first embedding and recognizing the convenience of working with Besov space, it is sufficient to manage the $B_{r,1}^{1/r}$ norm in the proofs of our results. However, as we will later see, some crucial modifications, derived from the definition of the variational seminorm, are needed before bounding this norm.
Furthermore, we obtain from  the fundamental theorem of calculus  that for all $r\in [1,\infty)$,
\beq\label{rou1}
V_r\big({\bf a}_u:u\in \mathcal{K}\big)\le \|\p_u ({\bf a}_u)\|_{L^1(u\in \mathcal{K})}\ {\rm whenever}\ \mathcal{K}\ {\rm is \ a\ interval}.
\eeq
In fact,
this inequality is effective in controlling terms with favorable bounds.
\subsection{An elementary decomposition of $H^{(u)}f$} \label{bd}
We conclude this section by presenting an elementary decomposition of $H^{(u)}f$.
By utilizing the partition of unity $\sum_{l\in\Z}\psi_l(t)=1$, we have
\beq\label{sig}
H^{(u)}f(x)=\sum_{l\in\Z}H^{(u)}_lf(x),\quad {\rm where}\ H^{(u)}_lf(x):=\int f(x-t)e^{iu\g(|t|)}\psi_l(t)K(t)dt.
\eeq
It should be noted that $K(t)$ satisfies condition (\ref{v1}). Consequently, for all $u\in \R$ and $l\in\Z$, we can establish the following bound for $H^{(u)}_lf$:
\beq\label{v2}
|H^{(u)}_lf(x)|\les \frac{1}{2^{nl}}\int_{|t|\le 2^{l+1}}|f(x-t)|dt\les Mf(x), \quad x\in\R^n,
\eeq
where $M$ represents the Hardy-Littlewood maximal operator. In the subsequent analysis, we will frequently employ (\ref{v2}).
\section{Reduction of  the variational inequality (\ref{var})}
\label{redut1}
Let  ${\bf H}f:=\{H^{(u)} f: u\in\R\}$, and  denote
$${\bf H}_jf:=\big\{H^{(u)} f: u\in [2^j,2^{j+1}]\big\},\quad j\in\Z.$$
 Then
   the left-hand side of (\ref{var}) can be controlled  (up to a uniform constant)
by  the sum of
 the dyadic long variation
$V_r^{\rm dyad}({\bf H}f)$ and  the short variation $V_r^{\rm sh}({\bf H}f)$, where these variations are defined as follows:
$$
\begin{aligned}
V_r^{\rm dyad}({\bf H}f):=&\ \sup_{N\in\N}~\sup_{k_1< \cdots< k_N
%\begin{matrix}
%    &~  k_1< \cdots< k_N \\
%%    &~  \{k_j\}\subset 2^\N
%\end{matrix}
}
\|H^{(2^{k_{i+1}})}f-H^{(2^{k_{i}})}f\|_{\ell^r(1\le i\le N-1)},\\
%\big(\sum_{i=1}^{N-1}|H^{(2^{k_{i+1}})}f-H^{(2^{k_{i}})}f|^r\big)^{1/r},\\
V_r^{\rm sh}({\bf H}f):=&\ \|V_r({\bf H}_jf)\|_{\ell^r(j\in\Z)}.
%\big(\sum_{j\in\Z}|V_j^r(Hf)|^r\big)^{1/r},
\end{aligned}
$$
 Since this procedure is standard (see e.g., \cite{JSW08,BMSW18}), %and have been use to the studies of various operators,
we omit the details.
\begin{prop}\label{long0}
 let $p\in(1,\infty)$ and $r\in(2,\infty)$. Suppose the function $\g\in \U$. Then
\beq\label{ka1}
\|V_r^{\rm dyad}({\bf H}f)\|_p\les \|f\|_p.
\eeq
\end{prop}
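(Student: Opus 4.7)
The plan is to upgrade (\ref{ka1}) into a jump inequality following the Jones-Seeger-Wright framework \cite{JSW08}, and then reduce that jump inequality to an $L^p$ bound for a long square function together with an $L^p$ bound for the lacunary maximal function $\sup_{k\in\Z} |H^{(2^k)}f|$. First, I would apply the embedding from \cite{JSW08}: for $r>2$ and $p\in(1,\infty)$,
\[
\|V_r^{\rm dyad}({\bf H}f)\|_p \les \sup_{\la>0}\big\|\la\sqrt{N_\la^{\rm dyad}(H^{(2^k)}f:k\in\Z)}\big\|_p,
\]
so the target reduces to an $L^p$ jump inequality for the dyadic family. A standard further reduction, after suitably modifying the $\la$-jump function to match the dyadic resolution of the family, dominates the jump function pointwise by the lacunary maximal function plus a telescoping square-function contribution. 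Thus the problem boils down to $L^p$ bounds on
\[
G_{\rm long}f(x):=\Big(\sum_{k\in\Z}|H^{(2^{k+1})}f(x)-H^{(2^k)}f(x)|^2\Big)^{1/2}
\]
together with $\sup_k |H^{(2^k)}f|$. The latter lacunary maximal estimate falls within the Stein-Wainger / Lie-Zorin-Kranich class for $\g\in\U$, and I would treat it as a black box.

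For $G_{\rm long}$ at $p=2$, I would apply Plancherel and reduce matters to the uniform Fourier multiplier estimate
\[
\sum_{k\in\Z}|m_{k+1}(\xi)-m_k(\xi)|^2\les 1, \qquad \xi\in\R^n,
\]
where $m_k$ is the multiplier of $H^{(2^k)}$. Using the decomposition (\ref{sig}) and the critical scale $k_*=\Phi(k)$ determined by $2^k\g(2^{k_*})\sim 1$, I would split each $m_k$ into annular pieces indexed by $l$. In the regime $l\ll k_*$ the phase is nearly flat, and the mean-zero condition on $\Omega$ together with a Taylor expansion produces polynomial decay in $k_*-l$; in the regime $l\gg k_*$, integration by parts in $t$ using the lower bound (\ref{curv}) on $s\g'(s)/\g(s)$ yields decay in $l-k_*$ after pairing with a Littlewood-Paley projection at frequency scale $|\xi|$; in the critical band $l\sim k_*$, the $u$-derivative of the multiplier together with the mean value theorem supplies the smallness of $m_{k+1}-m_k$ needed for $k$-summability. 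Telescoping and summing using the sparse structure of $\Phi$ encoded in Lemma \ref{l2.1}(i) and Lemma \ref{lpro} yield the desired uniform bound.

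The extension from $L^2$ to general $p\in(1,\infty)$ would be obtained by vector-valued Calder\'on-Zygmund theory: the operator $f\mapsto\{H^{(2^{k+1})}f-H^{(2^k)}f\}_{k\in\Z}$ has an $\ell^2(\Z)$-valued convolution kernel, and the smoothness and cancellation of $K$ in (\ref{v1}) combined with the pointwise bound (\ref{v2}) yield the standard H\"ormander condition. The main obstacle, as I see it, will be the stationary-phase analysis of $m_{k+1}-m_k$ in the critical band $l\sim k_*$: because $\g\in\U$ need not be homogeneous, the relationship between $k$ and $k_*$ is governed only by the two-sided estimates of Lemma \ref{lpro}, and one must treat the convex situation (S1) and the concave situation (S2) separately using (\ref{p80})--(\ref{p81}) in order to extract $k$-uniform decay of $|m_{k+1}(\xi)-m_k(\xi)|$ across all of $\R^n$.
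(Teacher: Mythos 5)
Your opening step (reducing (\ref{ka1}) to the jump inequality $\sup_{\la>0}\|\la\sqrt{N_\la(H^{(2^k)}f)}\|_p\les\|f\|_p$ via Lemma 2.1 of \cite{JSW08}) matches the paper, and your treatment of the oscillatory annuli -- Taylor expansion with the mean-zero condition for $l\ll k_*$, integration by parts off the critical frequency band, van der Corput/stationary phase on it -- is essentially the paper's Lemma \ref{high1}. But the next reduction is a genuine gap: you claim the jump function is dominated pointwise by the lacunary maximal function plus the square function of \emph{consecutive} differences $G_{\rm long}f=(\sum_k|H^{(2^{k+1})}f-H^{(2^k)}f|^2)^{1/2}$. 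This is false as a pointwise statement about sequences. Take ${\bf a}_k$ to be a sawtooth that climbs from $0$ to $2\la$ in $m$ steps of size $2\la/m$, returns to $0$ in $m$ steps, and repeats $J$ times: then $\la\sqrt{N_\la}\ge\la\sqrt{J}$, while $\sup_k|{\bf a}_k|=2\la$ and the consecutive-difference square function is $\approx\la\sqrt{J/m}$, so with $m=J$ the left side is unbounded relative to the right. The same example defeats the analogous domination for $V_r$, $r>2$. The correct reductions in the literature compare $H^{(2^k)}f$ to a \emph{model family} that itself satisfies a jump inequality (martingale averages, or families $\sum_{n\ge k}f*\sigma_n$ as in Theorem 1.2 of \cite{JSW08}), plus error terms that are summable in $L^p(\ell^2_k)$ -- not merely square-summable consecutive differences. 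This is exactly what the paper does: it splits $H^{(2^k)}f=H_{k,no}f+H_{k,o}f$, shows $\|\h_{k,k_*+l}f\|_{L^p(\ell^2_k)}\les2^{-\e l}\|f\|_p$ so that (\ref{tri1}) handles $H_{k,o}f$, and for the $m=0$ Taylor term of $H_{k,no}f$ uses the observation that $N_\la$ is unchanged by subtracting a $k$-independent quantity to replace the truncated singular integral $T_{0,k}f$ by $\mathcal{T}_{0,k}f=\sum_{n\ge k_*}f*\sigma_n$, to which the JSW jump theorem applies. That invariance trick, and the appeal to a jump inequality for a model family, are the missing ideas in your outline.

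A secondary but real problem is your use of the lacunary maximal function $\sup_k|H^{(2^k)}f|$ as a black box. For general $\g\in\U$ (in particular non-homogeneous phases and $0<\A<1$) no such off-the-shelf bound exists in the cited Stein--Wainger/Lie/Zorin-Kranich literature, and one of the stated points of this paper is precisely to avoid invoking the maximal estimate as a black box. Even if it were available, it would not repair the failed pointwise domination above. Finally, your focus on uniform bounds for $\sum_k|m_{k+1}(\xi)-m_k(\xi)|^2$ is unnecessary once the decomposition is set up correctly: the paper obtains $\ell^2_k$-summability of each annular piece directly (with geometric decay in $l$), with no need to exploit cancellation between consecutive $k$'s, and no need to separate the convex case (S1) from the concave case (S2) at this stage.
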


\begin{prop}\label{short}
Let $p\in(1,\infty)$, $r\in(2,\infty)$ and $r>p'/n$.
Assume the function $\g\in \U$. Then
\beq\label{ka2}
\big\|V_r^{\rm sh}({\bf H}f)\big\|_p\les \|f\|_p.
\eeq
\end{prop}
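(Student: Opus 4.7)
The plan is to follow the roadmap sketched after Theorem \ref{endpoint}: replace the short $V_r$ seminorm by a stronger Besov-type expression, pair spatial and temporal scales via the map $\Phi$, and treat two subregions of $(p,r)$ separately before interpolating. Using the embedding $B_{r,1}^{1/r}\hookrightarrow V_r$ from Subsection \ref{othef} on each interval $I_j=[2^j,2^{j+1}]$, I would insert a Littlewood--Paley decomposition $\{\Lambda_k^{(j)}\}_{k\ge 0}$ in the $u$-variable at scale $2^{j-k}$. Modulo a few crude terms handled by (\ref{rou1}) and the maximal bound (\ref{v2}), this reduces (\ref{ka2}) to showing, for each $k\ge 0$,
\[
\Big\|\Big(\sum_{j\in\Z}\|\Lambda_k^{(j)} H^{(\cdot)}f\|_{L^r(I_j)}^r\Big)^{1/r}\Big\|_{L^p}\lesssim 2^{-\delta(p,r)k}\|f\|_{L^p}
\]
for some $\delta(p,r)>0$ on the open region ${\bf G}_1$.

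Next, using $H^{(u)}f=\sum_l H_l^{(u)}f$ from (\ref{sig}), I would reparameterize $l=\Phi(j)+\kappa$ for $u\in I_j$. The estimates (\ref{p9}) single out $\kappa=0$ as the critical regime $2^j\g(2^l)\sim 1$, with $\kappa\gg 0$ giving a large phase and $\kappa\ll 0$ a small phase. For $\kappa\ll 0$, Taylor-expanding $e^{iu\g(|t|)}$ and using the mean-zero condition on $\Omega$ extracts exponential gain in $|\kappa|$ on every $L^p$. For $\kappa\gg 0$, stationary phase on the unit sphere combined with Lemma \ref{lpro} (whose estimates (\ref{1121})--(\ref{1122}) control $(\g')^{-1}$ in both the convex case (S1) and the concave case (S2)) yields $L^2\to L^2$ decay in $\kappa$, which interpolates with (\ref{v2}) to give an adequate rate on $L^p$ for $p\ne 2$.

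For the critical range $|\kappa|\le C$, rescale $t\mapsto 2^{\Phi(j)}t$ so that each piece becomes a unit-scale oscillatory convolution whose symbol has non-vanishing Gaussian curvature on its singular support. For $(p,r)$ in the rectangle $AODB$, I would apply the Seeger-style square function estimate announced in Subsection \ref{meth} to the rescaled operator, converting the sum over $j$ into a square function over the sparse frequency scales $\{\Phi(j)\}$ via Lemma \ref{l2.1}(ii) and (\ref{ineq1}). For $(p,r)$ in the triangle $CEF$, apply the Stein--Tomas restriction estimate on the rescaled kernel to get a pointwise decay strong enough to interpolate with the trivial $L^\infty$ bound (\ref{v2}) up to a neighbourhood of the line $1/r=n/p'$. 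Real interpolation between $AODB$ and $CEF$, combined with summation in $k$ and $\kappa$, then delivers (\ref{ka2}) throughout ${\bf G}_1$. The hardest step should be the critical regime inside $AODB$: one must produce a Seeger-style square function estimate adapted to the non-homogeneous $\g\in\U$ (rather than a pure power) and compatible with $l^r$ rather than $l^2$ summability in $j$; a secondary subtlety is the $\kappa\gg 0$ tail, where the inverse-function estimates (\ref{1121})--(\ref{1122}) go in opposite directions for (S1) and (S2), and one must check that either case gives the same stationary-phase decay.
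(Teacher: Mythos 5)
Your overall architecture (Besov reduction, pairing of the spatial and modulation scales via $\Phi$, Taylor expansion for the small-phase scales, stationary phase plus a two-region interpolation for the oscillatory scales) is the right one, and you correctly place the Seeger-type square function in the rectangle $AODB$. But there are concrete gaps. First, your treatment of the triangle $CEF$ cannot work as stated: interpolating any $L^2$-type decay with the trivial $L^\infty$ bound (\ref{v2}) only reaches exponents $p\ge 2$, whereas $CEF$ is the region $1<p<2n/(2n-1)$; to get there you must interpolate against an $L^1$- or $\HH^1$-endpoint bound. This is exactly what the paper does in Lemma \ref{400l}, interpolating the $\HH^1$ estimate (\ref{nod}) against the $L^2$ estimate carrying the stationary-phase decay (\ref{decy}); Stein--Tomas plays no role in $CEF$. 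In the paper, the restriction theorem is instead the engine of the \emph{large}-$p$ single-annulus estimate $L^2\to L^p(L^2_u)$ (Lemma \ref{60}) that feeds the Seeger square function in $AODB$, so you have the two tools partially swapped.

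Second, confining the heavy machinery to $|\kappa|\le C$ and dispatching $\kappa\gg0$ by interpolating the $L^2$ stationary-phase decay with (\ref{v2}) does not close in $AODB$: with $r$ near $2$ the $B^{1/r}_{r,1}$ norm in $u$ costs a factor of order $2^{(m+l)/2}$ (where $2^{m+l}$ is the phase size), while the interpolated decay is only $2^{-c(m+l)/p}$, which loses for large $p$. This is why the paper runs the restriction/square-function argument for \emph{every} oscillatory scale $l\ge1$, obtaining the rate $2^{-n(m+l)/p}$ of Lemma \ref{51}; the easy/hard dichotomy for $l\ge1$ is in the frequency variable ($|k-m|\le\mathcal{C}$ versus $|k-m|>\mathcal{C}$), not in the spatial scale. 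Third, the small-phase part is not ``crude'': the $m=0$ term of your Taylor expansion is the bare truncated singular integral, which has no decay in $|\kappa|$ and whose $\ell^r_j$-sum diverges; the mean-zero condition on $\Omega$ and (\ref{v2}) do not repair this. One must first exploit that $V_r$ is blind to $u$-independent quantities and subtract $\sum_{l\le0}H^{(0)}_{l+j_*}f$, replacing the kernel factor by $e^{i2^ju\g(|t|)}-1$ so that the expansion starts at $m=1$; this is the role of the operators $T_{j,l}[u]$ in (\ref{dft1}) and Lemma \ref{l1}.
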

We first prove (\ref{var})  under the assumption that Propositions \ref{long0} and \ref{short}  hold.
\begin{proof}[Proof of (\ref{var})]
As $V_r({\bf H}f)$ can be dominated (up to a uniform constant) by the sum of $V_r^{\rm dyad}({\bf H}f)$ and $V_r^{\rm sh}({\bf H}f)$, we conclude from Minkowski's inequality that (\ref{var}) directly follows from the estimates (\ref{ka1}) and (\ref{ka2}).
\end{proof}
In the following context, we will proceed with the sequential proof of Proposition \ref{long0} and  Proposition \ref{short}.
\section{Long variation-norm estimate}
\label{long}
In this section, we
demonstrate the long variation-norm estimate  (\ref{ka1}) in Proposition \ref{long0}.
By Lemma 2.1 in \cite{JSW08} (see also  \cite{Bour89}),
it suffices to  establish the desired   jump inequality, that is, for every $p\in (1,\infty)$,
\beq\label{ka}
\big\|\la\sqrt{N_\la\big(H^{(2^k)}f: k\in\Z\big)}\big\|_p
\les \|f\|_p
\eeq
uniformly in $\la>0$.
 Actually, these types of  jump inequalities   were considered   as the endpoint cases of the corresponding variation-norm estimates in the studies of  many important operators; moreover, they may be of independent interest  (see e.g., \cite{MSZ20,MSZ202}). We observe through a straightforward computation  that for any $\{{\bf a}_k\}_{k\in\Z}\in L^p(\ell^2)$,
\beq\label{tri1}
\Big\|\la\sqrt{N_\la\big({\bf a}_k: k\in\Z\big)}\Big\|_p\les \|{\bf a}_k\|_{L^p(\ell^2_k)}.
\eeq
 But this elementary inequality  is evidently insufficient to prove the desired estimate (\ref{ka}). Moreover, the derivation of (\ref{tri1}) does not  require exploiting the gain brought about by the difference between any two functions in $\{{\bf a}_k\}_{k\in\Z}$, as seen in the definition of the $\lambda$-jump function. This kind of gain,   however, will play a pivotal role in proving (\ref{ka}) (as seen in (\ref{za1}) below). To accomplish this, we will make use of the observation: the jump inequality  (\ref{ka}) remains unaffected if each function ${\bf a}_k$ is adjusted by  $\mathcal{A}$, a quantity that may depend on the variable $x$ but not on $k$.
\subsection{Reduction of (\ref{ka})}
For any $k,l\in\Z$,
  to abbreviate the notation, we denote
$$\h_{k,l}:=H^{(2^k)}_l.$$
For any $x\in\R^n$ and $k\in\Z$,
applying (\ref{sig}) with $u=2^k$,  we obtain the decomposition
$$H^{(2^k)}f(x)=\sum_{l\ge 0 }\h_{k,k_*-l}f(x)
+\sum_{l\ge1}\h_{k,k_*+l}f(x)=:H_{k,no}f(x)+H_{k,o}f(x),$$
where $k_*$ is defined as (\ref{brie}) with the substitution of $\bar{k}$ with $k$.
We  first explain how this partition works.
For every $k\in\Z$,
 %Since $e^{i2^k\g(|t|)}\sim 1$ on supp $\psi_l(t)$ whenever $l\le k_*$,
 $H_{k,no}f$ behaves analogously to a  singular integral because  $e^{i2^k\g(|t|)}\sim 1$ on supp $\psi_{k_*-l}(t)$ whenever $l\ge 0$, while $H_{k,o}f$ can be seen as  a sum of a sequence of   oscillatory integrals $\h_{k,k_*+l}f$ with $l\ge 1$. As a consequence of this discrepancy, we shall employ distinct strategies to control $H_{k,no}f$ and  $H_{k,o}f$.
%$$H^{(2^k)}_{no}f=\sum_{l\le k_*}H^{(2^k)}_lf,\ \ %H^{(2^k)}_{o}f=\sum_{l> k_*}H^{(2^k)}_lf.$$
To demonstrate (\ref{ka}),
 %(and Proposition \ref{long0}),
 it suffices  to prove Lemmas \ref{low1} and \ref{high1} below.
\begin{lemma}\label{low1}
Let  $\g\in \U$ and $p\in (1,\infty)$. Then  we have
\begin{equation}\label{wan1}
\Big\| \lambda \sqrt{N_\lambda\big(H_{k,\text{no}}f: k\in\Z\big)} \Big\|_p \lesssim \|f\|_p
\end{equation}
uniformly in $\la>0$.
\end{lemma}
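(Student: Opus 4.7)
The whole approach is built on the author's invariance observation: since $N_\lambda(\{\mathbf{a}_k\})$ is pointwise invariant under replacing $\mathbf{a}_k$ by $\mathbf{a}_k-\mathcal{A}(x)$ for any $k$-independent $\mathcal{A}(x)$, one can trade the ``elementary'' control (\ref{tri1})---which by itself is useless because $\|H_{k,\mathrm{no}}f\|_{L^p(l^2_k)}=\infty$ (the family does not decay at both ends of $k$)---for a bound on well-chosen $k$-increments. The heuristic is that on scales $|t|\lesssim 2^{k_*}$ one has $2^k\g(|t|)\lesssim 1$, so $H_{k,\mathrm{no}}f$ behaves like a truncated Calder\'on--Zygmund operator whose consecutive differences are Littlewood--Paley localized.

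Concretely, fix a reference $k_0$ and exploit $N_\lambda(H_{k,\mathrm{no}}f)=N_\lambda(H_{k,\mathrm{no}}f-H_{k_0,\mathrm{no}}f)$ to write telescopically
\[
H_{k,\mathrm{no}}f(x)-H_{k_0,\mathrm{no}}f(x)=\sum_{m\in[k_0,k)}\Delta_m(x),\qquad \Delta_m:=H_{m+1,\mathrm{no}}f-H_{m,\mathrm{no}}f,
\]
and decompose $\Delta_m=\Delta_m^{\mathrm{bd}}+\Delta_m^{\mathrm{int}}$, where $\Delta_m^{\mathrm{bd}}:=\sum_{(m+1)_*<j\le m_*}\h_{m,j}f$ is the annular contribution coming from the change of $k_*$ (by (\ref{p9}) the map $m\mapsto m_*$ satisfies the sparse hypothesis of Lemma \ref{l2.1}, so only $O(1)$ Littlewood--Paley scales are involved), while $\Delta_m^{\mathrm{int}}:=\sum_{j\le(m+1)_*}(\h_{m+1,j}f-\h_{m,j}f)$ is the phase-difference contribution on interior scales. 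One shows that both pieces are effectively Littlewood--Paley localized near scale $2^{m_*}$.

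Granting this localization, $\{\Delta_m\}_m$ is essentially $l^2$-orthogonal across $m$ by the sparseness of $m\mapsto m_*$. Applying a L\'epingle--Jones--Seeger--Wright type square-function jump inequality to the telescoping sum (which is precisely where the difference-structure gain referred to in the excerpt enters, together with (\ref{tri1})) yields
\[
\big\|\lambda\sqrt{N_\lambda(H_{k,\mathrm{no}}f:k\in\Z)}\big\|_p \lesssim \Big\|\Big(\sum_{m\in\Z}|\Delta_m|^2\Big)^{1/2}\Big\|_p,
\]
and the right-hand side is bounded by $\|f\|_p$ by combining the variant Littlewood--Paley inequality in Lemma \ref{l2.1}(ii) applied to $\phi(m)=m_*$ with the cancellation $\int_{\mathbb{S}^{n-1}}\Omega\,d\sigma=0$ of the Calder\'on--Zygmund kernel $K$.

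The main obstacle is the genuine Littlewood--Paley localization of $\Delta_m^{\mathrm{int}}$ at scale $\sim 2^{m_*}$: the naive pointwise bound yields only $|\Delta_m^{\mathrm{int}}|\lesssim Mf$ uniformly in $m$, which is not $l^2$-summable in $m$. To bypass this one has to use the cancellation of $K$ at \emph{every} subscale: expand $e^{i2^m\g(|t|)}-1=\sum_{r\ge 1}(i2^m\g(|t|))^r/r!$ and observe that each resulting kernel $K(t)\g(|t|)^r\psi_j(t)$ inherits the zero-mean property on the sphere, hence behaves as a smooth Calder\'on--Zygmund-type Littlewood--Paley piece at scale $2^j$, weighted by the prefactor $(2^m\g(2^j))^r\lesssim 2^{-r\A_0(m_*-j)}$ from (\ref{p9}). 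This geometric decay in the depth parameter $m_*-j$ is what permits summation over the interior scales to be reassembled as a single Littlewood--Paley piece near $2^{m_*}$, after which the sparse structure of $m\mapsto m_*$ collapses the $l^2_m$-sum into the standard Littlewood--Paley square function of $f$.
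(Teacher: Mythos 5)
Your overall architecture (invariance of $N_\lambda$ under $k$-independent shifts, Taylor expansion of $e^{i2^m\g(|t|)}$ on interior scales with the geometric gain $2^m\g(2^j)\lesssim 2^{-\A_0(m_*-j)}$ from (\ref{p9}), and the sparseness of $m\mapsto m_*$ via Lemma \ref{l2.1}) is in the spirit of the paper's proof. But the central step is a genuine gap: the inequality
\[
\big\|\lambda\sqrt{N_\lambda(\mathbf{a}_k:k\in\Z)}\big\|_p\ \lesssim\ \Big\|\Big(\sum_{m}|\mathbf{a}_{m+1}-\mathbf{a}_m|^2\Big)^{1/2}\Big\|_p
\]
is false for general families, and there is no ``L\'epingle--Jones--Seeger--Wright square-function jump inequality'' that delivers it from mere approximate $l^2$-orthogonality of the increments. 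Take $\mathbf{a}_k=\min(k,M)\,M^{-1/2}g$ for a fixed bump $g$: the right-hand side is $\|g\|_p$ uniformly in $M$, while $\mathbf{a}_M-\mathbf{a}_0=M^{1/2}g$ forces the left-hand side to grow like $M^{1/2}$. What controls jumps elementarily is the $l^2_k$ norm of the \emph{functions themselves} (this is (\ref{tri1})), not of their increments; passing from increment square functions to jumps of the partial sums is itself a theorem requiring either a martingale comparison (L\'epingle plus a square-function estimate for $\mathbf{a}_k-\mathbb{E}_kf$) or the fixed-dilate structure of \cite{JSW08}. Theorem 1.2 of \cite{JSW08} applies to $\sum_{n\ge k}f*\sigma_n$ for a \emph{single} fixed compactly supported mean-zero measure $\sigma$ and its dyadic dilates $\sigma_n$; your $\Delta_m^{\mathrm{bd}}$ and $\Delta_m^{\mathrm{int}}$ are not dilates of a fixed kernel (they carry the $m$-dependent phase $e^{i2^m\g(|t|)}$ and the Taylor weights), so that theorem cannot be invoked for them as stated.

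The paper closes exactly this hole differently: after Taylor-expanding, the $m=0$ term is the genuinely singular part $T_{0,k}f={\rm p.v.}\int f(x-t)\psi_{\circ,k_*}(t)K(t)\,dt$, which, after subtracting the $k$-independent full singular integral (your invariance observation), becomes $\mathcal{T}_{0,k}f=\sum_{n\ge k_*}f*\sigma_n$ with $\sigma$ the \emph{fixed} measure $\langle\sigma,f\rangle=\int f\psi K$; this is precisely the object covered by \cite{JSW08}. Every remaining term ($m\ge1$ in the Taylor expansion) is then bounded in $L^p(l^2_k)$ — the functions, not their increments — using the single-scale multiplier estimates (\ref{97}) and the factor $2^{-\A_0 ml}$, after which (\ref{tri1}) converts this into a jump bound. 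To repair your argument you would either have to recast your telescoping sum so that its ``singular'' part is literally of the fixed-dilate form required by \cite{JSW08}, or carry out the martingale comparison from scratch for your $\Delta_m$; neither is supplied. (A secondary, fixable issue: the kernels $e^{i2^m\g(|t|)}K(t)\psi_j(t)$ in $\Delta_m^{\mathrm{bd}}$ are only approximately mean-zero for $j$ near $m_*$, so even the claimed $L^p$ bound for $\bigl(\sum_m|\Delta_m|^2\bigr)^{1/2}$ needs the frequency-decay estimates of the type (\ref{as02}) rather than cancellation of $K$ alone.)
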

\begin{lemma}\label{high1}
Let  $\g\in \U$, $l\ge 1$ and $p\in (1,\infty)$. Then there exists a  constant $\e>0$ such that
\beq\label{hf1}
\|\h_{k,k_*+l} f\|_{L^p(\ell^2_k)}\les 2^{-l\e}\|f\|_p.
\eeq
\end{lemma}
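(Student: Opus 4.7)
The plan is to establish Lemma \ref{high1} by combining an $L^2$ decay estimate with a uniform-in-$l$ vector-valued $L^p$ bound, then interpolating. For $l \ge 1$, (\ref{p9}) furnishes a large phase parameter $\nu_k := 2^k \g(2^{k_*+l}) \gtrsim 2^{\A_0 l - \A_1}$ that will drive the decay. For the $L^2$ step, Plancherel reduces the task to showing $\sum_k |m_{k, k_*+l}(\xi)|^2 \les 2^{-\e_2 l}$ uniformly in $\xi$, where $m_{k, k_*+l}$ denotes the Fourier multiplier of $\h_{k, k_*+l}$. After rescaling $t = 2^{k_*+l} s$, this multiplier becomes an oscillatory integral on $|s| \sim 1$ with phase $\nu_k \tilde{\g}(|s|) - 2^{k_*+l}|\xi|\, s \cdot \hat{\xi}$, where $\tilde{\g}(r) := \g(2^{k_*+l} r)/\g(2^{k_*+l})$ has uniformly controlled derivatives by (\ref{curv})--(\ref{smooth}). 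Off the critical annulus $|\xi| \sim 2^k \g'(2^{k_*+l})$, integration by parts based on (\ref{curv}) with $j=1$ produces arbitrarily rapid decay in $\nu_k$. On the annulus, one-dimensional stationary phase in the radial variable via the curvature from (\ref{curv}) with $j=2$, followed by stationary-phase decay on $S^{n-1}$, yields $|m_{k, k_*+l}(\xi)| \les \nu_k^{-n/2}$. A crucial observation is that for each fixed $\xi$, only $O(1)$ values of $k$ put $\xi$ on this critical annulus: the map $k \mapsto 2^k \g'(2^{k_*+l})$ is effectively monotonic with multiplicatively bounded step, as follows from the definition (\ref{star1}) of $k_*$, the bounds (\ref{p8})--(\ref{p81}), and Lemma \ref{lpro}. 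Together these yield $\sum_k |m_{k, k_*+l}(\xi)|^2 \les 2^{-n\A_0 l}$.

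For the uniform $L^p$ bound on $(1,\infty)$, I would exploit the frequency concentration of the multiplier. Set $\phi_l(k) := \lfloor \log_2(2^k \g'(2^{k_*+l})) \rfloor$ and split $\h_{k, k_*+l} f = \h_{k, k_*+l} \tilde{P}_{\phi_l(k)} f + \h_{k, k_*+l}(I - \tilde{P}_{\phi_l(k)}) f$. For the main term, the pointwise bound (\ref{v2}) together with the Fefferman--Stein vector-valued maximal inequality gives
\[
\big\|\big(\sum_k |\h_{k, k_*+l} \tilde{P}_{\phi_l(k)} f|^2\big)^{1/2}\big\|_p \les \big\|\big(\sum_k |\tilde{P}_{\phi_l(k)} f|^2\big)^{1/2}\big\|_p \les \|f\|_p,
\]
where the last step applies Lemma \ref{l2.1}(ii) once one verifies that $\phi_l$ satisfies the sparse condition with $n_0$ independent of $l$ (a consequence of the monotonicity established above). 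For the error term, the non-stationary phase estimates from the $L^2$ step upgrade to arbitrarily rapid decay in $l$ via standard multiplier or vector-valued Calder\'on--Zygmund arguments. Real interpolation between this uniform $L^p$ bound and the $L^2$ decay then produces $2^{-l\e}$ decay for every $p \in (1,\infty)$, completing the proof.

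The principal obstacle will be the stationary-phase analysis in the $L^2$ step: because $\g$ ranges over the generic class $\U$ rather than being a homogeneous monomial, one cannot reduce to a single model integral, and the decay must be extracted uniformly across both curvature regimes (S1) and (S2) of Subsection \ref{Fursub1}, using only the information in (\ref{curv})--(\ref{smooth}) and carefully handling the behavior of the rescaled phase $\tilde{\g}$ near its critical point. A secondary technical point is verifying the sparse condition for $\phi_l$ with a constant $n_0$ independent of $l$; this reduces to combining (\ref{p8})--(\ref{p81}) with the scale relation (\ref{p9}) to show that $k \mapsto 2^k \g'(2^{k_*+l})$ increases by at least a fixed multiplicative factor over each window of bounded length.
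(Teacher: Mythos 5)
Your overall architecture (pointwise multiplier bounds plus Plancherel for the $L^2$ decay, a uniform vector-valued $L^p$ bound, then interpolation) matches the paper's treatment of the critical frequency regime, but your $L^2$ step has a genuine gap: the bounds you state do not make $\sum_{k\in\Z}|m_{k,k_*+l}(\xi)|^2$ converge. Fix $\xi\neq 0$. As $k\to+\infty$ one has $k_*\to-\infty$, so by (\ref{m2}) the quantity $2^{k}\g'(2^{k_*+l})=2^{-l-k_*}\cdot\bigl[2^{k+l+k_*}\g'(2^{k_*+l})\bigr]$ tends to infinity, and hence \emph{every} sufficiently large $k$ places $\xi$ in the low-frequency off-critical regime $|\xi|\ll 2^{k}\g'(2^{k_*+l})$. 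In that regime the gradient of the phase is comparable to $2^{k+l+k_*}\g'(2^{l+k_*}|t|)$, which by (\ref{m2}) lies in $[2^{\A_0 l},2^{\A_1 l}]$ \emph{uniformly in $k$} (your $\nu_k$ does not decay in $k$); integration by parts therefore yields only $|m_{k,k_*+l}(\xi)|\lesssim 2^{-N\A_0 l}$ with no decay in $k$, and the square sum over the infinitely many such $k$ diverges. The missing ingredient is the cancellation $\int_{\mathbb{S}^{n-1}}\Omega\,d\sigma=0$: since the phase at $\xi=0$ is radial, $\int e^{iA_{k,l}(t,0)}\tilde{\psi}(t)\,dt=0$, and Taylor-expanding $e^{-i2^{l+k_*}\xi\cdot t}$ produces the extra geometric factor $2^{l+k_*}|\xi|$ that makes the $k$-sum (equivalently, the sum over the low Littlewood--Paley pieces $j\le-\nu l$) converge. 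This is exactly how the paper proves (\ref{l11}); your sketch never invokes the mean-zero hypothesis on $K$ in the $L^2$ step, and without it the claimed bound $\sum_k|m_{k,k_*+l}(\xi)|^2\lesssim 2^{-n\A_0 l}$ is unobtainable. The same omission resurfaces in your $L^p$ "error term", whose low-frequency part also needs this cancellation to be summable.

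A secondary point: your ``crucial observation'' that only $O(1)$ values of $k$ put a fixed $\xi$ on the critical annulus $|\xi|\sim 2^k\g'(2^{k_*+l})$ is not justified for general $\g\in\U$. Since (\ref{m2}) pins the factor $2^{k+l+k_*}\g'(2^{k_*+l})$ down only to the window $[2^{\A_0 l},2^{\A_1 l}]$, monotonicity of $k\mapsto 2^k\g'(2^{k_*+l})$ with a uniform multiplicative gap is guaranteed only over windows of length $\gtrsim l$, so the multiplicity can be $O(l)$ rather than $O(1)$. This is harmless---$l\cdot 2^{-n\A_0 l}$ still decays, and the paper absorbs the same $O(l)$ loss by summing over $|j|<\nu l$---but it should be argued, not asserted. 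Relatedly, the full stationary-phase gain $\nu_k^{-n/2}$ is more than is needed: the paper's one-dimensional van der Corput estimate (\ref{a61}) in the radial variable, giving $2^{-\A_0 l/2}$, already suffices and avoids the delicate angular analysis for a generic $\g\in\U$.
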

Assuming Lemmas \ref{low1} and \ref{high1}, we prove the
Proposition \ref{long0} by showing the  jump inequality
(\ref{ka}).
\begin{proof}[Proof of Proposition \ref{long0}]
Summing over $l\ge 1$ on both sides of  (\ref{hf1}) yields the inequality
$
\|H_{k,o}f\|_{L^p(\ell^2_k)}\les \|f\|_p,
$
which, combined with  (\ref{tri1}),
implies  that
\beq\label{ol1}
\big\|\la\sqrt{N_\la\big(H_{k,o}f: k\in\Z\big)}\big\|_p\les \|f\|_p
\eeq
uniformly in $\la>0$.
  Then we obtain the desired (\ref{ka}) by combining (\ref{ol1}) and  Lemma \ref{low1}. %and  Proposition \ref{long0}.
\end{proof}
Next, we prove Lemma \ref{low1}
and Lemma \ref{high1} in order.
\subsection{Proof of Lemma \ref{low1}}
For $k\in\Z$, $m\in\Z^+$ and $l\in \Z_{\ge 0}$, we denote
\beq\label{99}
\begin{aligned}
    \psi_{\circ,k}(t):=&\ \sum_{l'\ge 0} \psi_{k-l'}(t),\\
    \Psi_{k,m,l}(t):=&\ \big[{\g(|t|)}/{\g(2^{k_*-l})}\big]^m \psi_{k_*-l}(t),\\
    \Psi_m^{k,l}(t):=&\ \Psi_{k,m,l}(2^{k_*-l}t)K(t),
\end{aligned}
\eeq
where  $k_*$ is defined by (\ref{brie}) with $\bar{k}$ replaced by $k$.
From the conditions (\ref{curv}) and (\ref{smooth}) we can see that $\{\Psi_m^{k,l}\}$ are  Schwartz functions supported  in $\{t\in\R^n:2^{-1}\le |t|\le 2\}$. In particular, we have by  a routine calculation that there is a positive constant   $C_\circ$ independent of $k$ and $l$ such that
\beq\label{upb}
\mathcal{B}_{m,k,l}:=\|\Psi_m^{k,l}\|_{W^{1,\infty}}\le C_\circ^m,
\eeq
 where
 $W^{1,\infty}$ is a non-homogeneous Sobolev space defined by
 $\|f\|_{W^{1,\infty}}:=\|f\|_\infty+\|\na f||_\infty$.
 % with $s=1$ and $p=\infty$. %see Chapter
%$W^{k,p}$ with $k=1$ and $p=\infty$,
%\cite{GT01} for the detailed discussion of general Sobolev space $W^{k,p}$.
Using (\ref{p8}), we have
$2^k\g(|t|)\les2^k\g(2^{k_*})\les  1$
 whenever $|t|\les 2^{k_*}$. Applying
 Taylor's expansion
$e^{i2^k\g(|t|)}=\sum_{m=0}^\infty \frac{(i2^k\g(|t|))^m}{m!}$
and (\ref{99}), we
can write $H_{k,no}f$   as
\beq\label{zo1}
\begin{aligned}
H_{k,no}f(x)&=\
 \sum_{l\ge 0}\sum_{m=0}^\infty\int f(x-t) \frac{(i2^k\g(t))^m}{m!} \psi_{k_*-l}(t)K(t)dt\\
%&=\ \int f(x-t) \psi^{k_*}(t)K(t)dt\\
%&\ \ \ \ \ +\sum_{l\ge 0}\sum_{m=1}^\infty %\frac{(i2^k\g(2^{k_*-l})^m}{m!}\int f(x-t) %(\frac{\g(t)}{\g(2^{k_*-l})})^m \psi_{k_*-l}%(t)K(t)dt\\
&=: \ T_{0,k}f(x)+\sum_{l\ge 0}\sum_{m=1}^\infty \frac{(i2^k\g(2^{k_*-l})^m}{m!} T_{m,k,l}f(x),
\end{aligned}
\eeq
where the operators $T_{0,k}$ and $T_{m,k,l}$ are respectively defined by
\beq\label{no2}
\begin{aligned}
T_{0,k}f(x):=&\  {\rm p.v.}\int f(x-t) \psi_{\circ,k_*}(t)K(t)dt,\\
T_{m,k,l}f(x):=&\ \int f(x-t)  \Psi_{k,m,l}(t)K(t)dt.
\end{aligned}
\eeq
Then we can reduce the matter to proving the associated jump inequalities for each term appearing on the right-hand side  of
(\ref{zo1}).  By (\ref{p8}) and a routine computation, we  deduce
\beq\label{q1}
\mathcal{B}_{m,k,l}^{-1}|T_{m,k,l}f(x)|\les M f(x).
\eeq

We first bound $T_{0,k}f$.
Establishing the desired jump inequality directly for the singular integral $T_{0,k}f(x)$ appears challenging. To circumvent the singularity, as mentioned earlier below (\ref{tri1}), we  exploit this observation: the intended estimate (\ref{wan1}) remains unaffected if we substitute $T_{0,k}f(x)$ with
\beq\label{za1}
\mathcal{T}_{0,k}f(x):=-\sum_{n\ge k_*}f*\sigma_{n}(x),
\eeq
where  $\mathcal{F}\{\sigma_{n}\}(\cdot):=\hat{\sigma}(2^{n}\cdot)$ and
$ \langle\sigma,f\rangle:=\int f(t)\psi (t)K(t)dt.$
%This procedure is to eliminate the singularity of the kernel function.
%To establish  the desired jump inequality (\ref{wan1}),
Note that
 $\hat{\sigma}(0)=0$  and $\sigma$ is a compactly supported finite Borel measure.
 Next, we  bound the jump inequality for
$\mathcal{T}_{0,k}f$.
% instead of ${T}_{0,k}$.
It follows  by Theorem 1.2 in \cite{JSW08} that for  $p\in(1,\infty)$,
$$\big\|\la\sqrt{N_\la\big(\mathcal{T}_{0,k}f: k\in\Z\big)}\big\|_p\les
\big\|\la\sqrt{N_\la\big(\sum_{n\ge k}f*\sigma_{n}: k\in\Z\big)}\big\|_p\les
 \|f\|_p$$
 uniformly in $\la>0$. As a result,   for  $p\in(1,\infty)$, we obtain
$$
\sup_{\la>0}\big\|\la\sqrt{N_\la\big({T}_{0,k}f: k\in\Z\big)}\big\|_p\les
 \|f\|_p.
$$

To finish the proof of  (\ref{low1}),
it suffices  to  bound the desired jump inequality for the second term on the right-hand side of
(\ref{zo1}).
 We claim that for each $p\in (1,\infty)$,
\begin{eqnarray}
   % \|\la\sqrt{N_\la\{T_{0,k}^{(2)}f:\ k\in\Z\}}\|_p\les& \|f\|_p,\label{as1} \\
\|T_{m,k,l}f\|_{L^p(\ell^2_k)}\les \mathcal{B}_{m,k,l} \|f\|_p.\label{as2}
\end{eqnarray}
%where  $\psi^{k_*}(t)=\sum_{l\le k_*}\psi_l(t)$.
Using Minkowski's inequality,
 (\ref{tri1}),
(\ref{p9}) with $\bar{k}=k$,
and the above claim (\ref{as2}) in order, we can deduce that the
 $L^p(\ell^2_k)$ norm of the second term on the right-hand side of
(\ref{zo1}) is
$
\les \sum_{l\ge 0}\sum_{m=1}^\infty  \frac{\mathcal{B}_{m,k,l}}{m!} \frac{1}{2^{\A_0 ml}} \|f\|_p,$ which, together with the uniform estimate  (\ref{upb}),
  implies the desired  jump inequality.
  %}) %using (\ref{tri1}).
%{\bf Proof of  (\ref{as1})}\hskip.3in
%In order to get the desired estimate for $H^{(2^k)}_{no}f$, it suffices to show
%\beq\label{no1}
%%\|\int f(x-t) \Phi^{k_*}%(t)K(t)dt\|_{L^p(\ell^2_k)}\les \|f\|_p
%\eeq
%holds for $\Phi\in \{\psi,\Psi\}$.
%Indeed, we use (\ref{no1}) with $\Phi=\psi$ and $\Phi=\Psi$ for $m=0$ and $m\ge 1$ respectively.
%{We only show the details for $m=0$ since $m\ge 1$ can be treated similarly. Namely,
%we next show (\ref{no1}) with $\Psi=\psi$.}
We next aim at proving the above claim (\ref{as2}). By the Littlewood-Paley  decomposition $f=\sum_{j\in\Z}P_{j
+l-k_*}f$, %where $\widehat{P_j}=\beta(2^{-j}\xi)$ for $j\in\Z$.
%\mathcal{F}\{P_{j+l-k_*}\}= \beta(\frac{\xi}{2^{j+l-k_*}})
it is sufficient  to prove
that there exists an   $\e>0$   such that  for  $p\in(1,\infty)$
%\beq\label{as01}
%\|T_{0,k}^{(2)}P_{j-k_*}f\|_{L^p(\ell^2_k)}\les %2^{-|j|\e} \|f\|_p
%\eeq
%and
\beq\label{as02}
\mathcal{B}_{m,k,l}^{-1} \|T_{m,k,l}P_{j+l-k_*}f\|_{L^p(\ell^2_k)}\les 2^{-\e|j|} \|f\|_p.
\eeq
%Here $\mathcal{C}_1$ is given by (\ref{as2}).
%Note that we can get (\ref{as02})   without the small factor $2^{-|j|\e}$ is a direct consequence of
Utilizing   (\ref{q1}), the
Fefferman-Stein inequality and  Lemma \ref{l2.1} in sequence, we can easily obtain that  for  $p\in(1,\infty)$,
$$
\begin{aligned}
&\
\mathcal{B}_{m,k,l}^{-1}\|T_{m,k,l}P_{j+l-k_*}f\|_{L^p(\ell^2_k)}
\les\  \|M P_{j+l-k_*}f\|_{L^p(\ell^2_k)}
\les\  \|P_{j+l-k_*}f\|_{L^p(\ell^2_k)}\les \|f\|_p.
\end{aligned}
$$
%which  (\ref{as02})   without the small factor $2^{-|j|\e}$
Thus,
by interpolation,  it suffices to prove
%the case $p=2$, namely,
\beq\label{98}
\mathcal{B}_{m,k,l}^{-1}\|\mathcal{F}^{-1}\{ \widehat{f}\ \mathcal{M}_{j,l,k_*}\}\|_{L^2(\ell^2_k)}\les 2^{-\e|j|} \|f\|_2,
\eeq
 where the multiplier $\mathcal{M}_{j,l,k_*}$ is given by
\beq\label{as91}
\begin{aligned}
\mathcal{M}_{j,l,k_*}(\xi)=&\ \psi(\frac{\xi}{2^{j+l-k_*}}) \int  e^{-it\cdot \xi}  \Psi_{k,m,l}(t)K(t)dt\\
=&\ \psi(\frac{\xi}{2^{j+l-k_*}}) \int  e^{-i2^{k_*-l}t\cdot \xi} \Psi_m^{k,l}(t)dt.
\end{aligned}
\eeq
By Plancherel's identity, it is sufficient for (\ref{98}) to prove
the uniform inequality
\beq\label{97}
|\mathcal{M}_{j,l,k_*}(\xi)|\les \mathcal{B}_{m,k,l} \  \psi(\frac{\xi}{2^{j+l-k_*}}) 2^{-|j|}.
\eeq
%We will divide the proof of (\ref{97}) into two parts: $j\ge 0$ and $j<0$.
 %Making the change of variables
%$t\to 2^{k_*-l}t$, we have
%$$m_{j,k_*}(\xi)=\beta(\frac{\xi}{2^{j+l-k_*}}) \int  e^{-i2^{k_*-l}t\cdot \xi} \Psi_m(t)dt.$$
%For $j\ge 0$,
%that the Fourier transform of  $\Psi_m$ is also a Schwartz function having a rapidly fast decay, that is,
Integrating  by parts in $t$, we  obtain from (\ref{as91}) that
\beq\label{ren1}
|\mathcal{M}_{j,l,k_*}(\xi)|\les \psi(\frac{\xi}{2^{j+l-k_*}})
|\frac{\xi}{2^{l-k_*}}|^{-1}\|\na \Psi_m^{k,l}\|_{L^\infty}.
\eeq
On the other hand,
 using  $\int \Psi_m^{k,l}(t) dt=0$ derived from $\int_{\mathbb{S}^{n-1}}\Omega (\theta)d\sigma=0$, we can infer
\beq\label{ren2}
|\mathcal{M}_{j,l,k_*}(\xi)|
\les   \psi(\frac{\xi}{2^{j+l-k_*}})
|\frac{\xi}{2^{l-k_*}}|\|\Psi_m^{k,l}\|_{L^\infty}.
\eeq
Since  $|\xi|\sim 2^{j+l-k_*}$ and (\ref{upb}),
we finally obtain
 (\ref{97})  by combining (\ref{ren1}) and  (\ref{ren2}). This completes the proof of  Lemma \ref{low1}.
\subsection{Proof of Lemma \ref{high1}}
 Applying the Fourier inverse transform, we can  write the  multiplier of the operator $\h_{k,k_*+l}$ as
\beq\label{007}
m_{k,l}(\xi)
%=\int e^{i(\xi\cdot t+2^k %\g(|t|))}\psi_{l+k_*}(t)K(t)dt.
:=\int e^{iA_{k,l}(t,\xi)}\tilde{\psi}(t)dt,
\eeq
where the functions $A_{k,l}$ and $\tilde{\psi}$ are given by
\beq\label{smoo}
A_{k,l}(t,\xi):=2^k\g(2^{l+k_*}|t|)-2^{l+k_*}\xi\cdot t\ \ {\rm and} \ \ \tilde{\psi}(t)= \psi(t)K(t).
\eeq
An elementary
 computation gives
$$\na_t(A_{k,l}(t,\xi))
=  t|t|^{-1}2^{k+l+k_*}\g'(2^{l+k_*}|t|)-2^{l+k_*}\xi,$$
%Since  $|t|\sim 1$, we claim:
which may imply that  there exists a positive integer  $\nu$ (large enough) such that
\beq\label{e1}
|\na_t(A_{k,l}(t,\xi))|\ge 2^{
l/{\nu}}
\eeq
whenever $|\xi|\gtrsim 2^{{\nu} l-k_*}$ or $|\xi|\les 2^{-{\nu} l-k_*}$. Indeed,
for $l\ge1$,
using $|t|\sim 1$,  (\ref{p8}), (\ref{curv}) and (\ref{smooth}) with $j=1$, along with (\ref{p9}), we derive
\beq\label{m2}
2^{\A_0l}\les
2^{k+l+k_*}\g'(2^{l+k_*}|t|)
= \frac{2^{l+k_*}\g'(2^{l+k_*}|t|)}{\g(2^{l+k_*})} \frac{\g(2^{l+k_*})}{\g(2^{k_*})}
2^k\g(2^{k_*})
\les 2^{\A_1l},
\eeq
and subsequently establish (\ref{e1})
by choosing $\nu$ sufficiently large.
  Therefore, relying on (\ref{e1}), we may anticipate that the integration by parts will yield the desired estimate whenever $|\xi| \gtrsim 2^{{\nu} l-k_*}$ or $|\xi| \lesssim 2^{-{\nu} l-k_*}$. However, for the remaining values of $\xi$, where the phase $A_{k,l}(t,\xi)$ may exhibit a critical point, we must adopt a different approach. As we will see later, the corresponding estimates for the remaining $\xi$ will hinge on the crucial property that the hyper-surface $(t,2^k\g(2^{l+k_*}|t|))$ possesses non-vanishing Gaussian curvature.
Motivated by  the above discussions, we utilize  the Littlewood-Paley decomposition to get
$$\h_{k,k_*+l} f(x)=\sum_{j\ge {\nu}l}\h_{k,l}^jf(x)
+\sum_{j\le  -{\nu}l}\h_{k,l}^jf(x)
+\sum_{|j|<{\nu}l}\h_{k,l}^jf(x),$$
where $\nu$ is given by (\ref{e1}), and
%%%%%%%%%%%%%%%%%%%%%%%%%%%%%%%%%%%%
$$\h_{k,l}^j:=\h_{k,k_*+l}P_{j-k_*}.$$
Let $m_{k,l}^j(\xi)$ denote the  associated
 multiplier of $\h_{k,l}^j$  described  by
\beq\label{cheng1} m_{k,l}^j(\xi):=\psi(\frac{\xi}{2^{j-k_*}})m_{k,l}(\xi)%\int e^{iA_{k,l}(t,\xi)}\tilde{\psi}(t)dt,
\eeq
where $m_{k,l}$ is defined in (\ref{007}).
Hence,
it is sufficient for  (\ref{hf1}) to show that there is an $\e>0$ such that
for $p\in(1,\infty)$,
\begin{align}
\|\sum_{|j|< {\nu}l}\h_{k,l}^j f\|_{L^p(\ell^2_k)}\les&\   2^{-l\e}\|f\|_p,\label{l12}\\
\|\sum_{j\le - {\nu}l}\h_{k,l}^j f\|_{L^p(\ell^2_k)}\les&\   2^{-l\e}\|f\|_p\quad \label{l11}{\rm and}\\
\|\sum_{j\ge {\nu}l}\h_{k,l}^j f\|_{L^p(\ell^2_k)}\les&\  2^{-l\e}\|f\|_p \label{l10}.
\end{align}
From  (\ref{ineq1}) we  can infer that  for every $p\in(1,\infty)$,
\beq\label{basic}
\|\h_{k,l}^j f\|_{L^p(\ell^2_k)}\les \|M P_{j-k_*}f\|_{L^p(\ell^2_k)}\les \|f\|_p
\eeq
with the implicit  constant  independent of
 $(k,j,l)$.

We first treat (\ref{l12}). By (\ref{basic}) and  interpolation, it suffices to show the case $p=2$. By Plancherel's identity,
we further reduce the matter to proving the uniform estimate
\beq\label{goo1}
\|m_{k,l}^j(\xi)\|_{\ell^2_k}\les 2^{-l\e}
\eeq
with the implicit  constant  independent of
 $j$, $l$ and $\xi$.
Applying (\ref{smoo}) and
 the polar coordinates, we can rewrite $m_{l,k}(\xi)$ as
 %expand the multiplier $m_{l,k}(\xi)$ as follows:
$$m_{l,k}(\xi)=\int_{\mathbb{S}^{n-1}}\Omega(\theta)\big(
\int r^{-1}{\psi}(r) e^{iA_{k,l}(r\theta,\xi)}dr
\big)d\sigma.$$
If the lower bound estimate
\beq\label{a61}
|\frac{\p^2}{\p r^2}(A_{k,l}(r\theta,\xi))|=2^k2^{2(l+k_*)}|\g''(2^{l+k_*}r)|\gtrsim 2^{\A_0l}
\eeq
holds, where  $\A_0$ is given by   (\ref{p8}),
we can deduce,  by van der Corput's lemma, that   $|m_{k,l}(\xi)|\les 2^{-\A_0 l/2}$. Then
the desired (\ref{goo1}) with $\e=\A_0/2$ follows from (\ref{cheng1}) since $\sum_{k\in\Z}\psi_{j-k_*}(\xi)\les 1$.
So it remains to prove (\ref{a61}).
 Using the condition (\ref{curv}) with $j=2$,  (\ref{p8}) and (\ref{p9}), we have
$$|\frac{\p^2}{\p r^2}(A_{k,l}(r\theta,\xi))|
=\frac{2^{2(l+k_*)}|\g''(2^{l+k_*}r)|}{\g(2^{l+k_*}r)}\frac{\g(2^{l+k_*}r)}{\g(2^{k_*}r)} \frac{\g(2^{k_*}r)}{\g(2^{k_*})}  2^k\g(2^{k_*})
\gtrsim 2^{\A_0l},\quad\text{as desired.}$$

We next prove  $(\ref{l11})$.
%The small factor on the right-hand side
% by  using $j\le -\A_0 l$ to obtain .
For  $j\le -\nu  l$ and $|\xi|\sim 2^{j-k_*}$,  we have Taylor's expansion    $e^{-i2^{l+k_*}\xi\cdot t}=\sum_{m=0}^\infty \frac{(-i2^{l+k_*}\xi\cdot t)^m}{m!}$. This with
$\int e^{iA_{k,l}(t,0)}\tilde{\psi}(t)dt=0$ implies
\beq\label{goo2}
\h_{k,l}^jf(x)=\sum_{m=1}^\infty\frac{(-i)^m}{m!} {2^{(j+l)m}}\  T_m P_{j-k_*}f(x),
\eeq
where the operators $\{T_m\}$ are given by
\beq\label{comp}
T_m f(x)=\int \widehat{f}(\xi) e^{i\xi\cdot x}
\left(\int (\frac{\xi}{2^{j-k_*}} \cdot t)^m e^{iA_{k,l}(t,0)
}\tilde{\psi}(t)dt\right)d\xi.
\eeq
By extending  the term $(2^{k_*-j}\xi\cdot t)^m$ in (\ref{comp}), 
%and using  a routine computation,
 $|T_m P_{j-k_*}f|$ can be bounded by
the sum of $O(n^m)$ terms of the form
$2^m |\tilde{P}_{j-k_*,m}f| $,
where the operators
%$\mathcal{C}_{m,k,l}$ admits a uniform bound $|\mathcal{C}_{m,k,l}|\les 2^m $ and
$\{\tilde{P}_{j-k_*,m}\}$  are  variants of the Littlewood-Paley projection $P_{j-k_*}$ (see Lemma \ref{l2.1}). In particular,  for every $m\ge1$ and  $p\in(1,\infty)$,
$
\|\tilde{P}_{j-k_*,m}f\|_{L^p(\ell^2_k)}
\les 4^m\|f\|_p. $
Combining this estimate with (\ref{comp}) implies that
 there exists a positive constant  $C_*$ independent of $m$ such that for every $m\ge1$ and  $p\in(1,\infty)$,
\beq\label{go9}
\|T_m P_{j-k_*}f\|_{L^p(\ell^2_k)}
\les C_*^m\|f\|_p,
 \eeq
 where the implicit constant is independent of $m$. Then it follows from the combination of
 (\ref{go9}) and (\ref{goo2}) that  the $L^p$ norm of $\h_{k,l}^jf$ can be bounded  by
 $\sum_{m=1}^\infty\frac{C_*^m}{m!} {2^{(j+l)m}}$, up to a uniform  constant. Then we achieve (\ref{l11}) by
 $j\le -\nu l$ and a routine calculation.

Finally, we prove (\ref{l10}). Since $j\ge {\nu}l$ and $\nu$ is sufficiently large,  we have (\ref{e1}).
As in the previous statements,
we will use the integration by parts (in the variable $t$) to the multiplier $m_{k,l}^j$.  We define  the differential  operator $\mathcal{L}$  by
$
\mathcal{L}(g):=i\frac{2^{l+k_*}(\xi\cdot \na_t) g}{|2^{l+k_*}\xi|^2},
$
and denote by $^t\mathcal{L}$ the transpose of $\mathcal{L}$.  Integrating  by parts in $t$ gives
\beq\label{inte01}
\begin{aligned}
m_{k,l}^j(\xi)=&\ \psi(\frac{\xi}{2^{j-k_*}})\int \mathcal{L}(e^{-i2^{l+k_*}\xi\cdot t}) e^{iA_{k,l}(t,0)}\tilde{\psi}(t)dt\\
=&\ \psi(\frac{\xi}{2^{j-k_*}})\int e^{-i2^{l+k_*}\xi\cdot t}\  (^t\mathcal{L})\big( e^{iA_{k,l}(t,0)}\tilde{\psi}(t)\big)dt,
\end{aligned}
\eeq
where
$\mathcal{L}(e^{-i2^{l+k_*}\xi\cdot t})=e^{-i2^{l+k_*}\xi\cdot t}$ was used in the first equality of (\ref{inte01}).
Extending this term $(^t\mathcal{L})(\cdot)$ in the second integral of (\ref{inte01}), and applying the upper bound in (\ref{m2}),
 we can write $\h_{k,l}^j$ as
the sum of two terms like
$2^{-(j+l-\A_1 l)}\tilde{\h}_{k,l}^jf(x),$
up to a uniform constant,
where $\tilde{\h}_{k,l}^j$ is a variant of ${\h}_{k,l}^j$, and it satisfies an estimate similar to  (\ref{basic}).
Thus, to see (\ref{l10}), it  suffices to show
that for  every $l\ge 1$ and every $j\ge \nu l$,
\beq\label{m3}
2^{-(j+l-\A_1 l)} \|\tilde{\h}_{k,l}^jf\|_{L^p(\ell^2_k)}
\les 2^{-j/2}\|f\|_p.
\eeq
Invoking  that $\nu$ is  sufficiently  large, we immediately achieve (\ref{m3}).
This completes the proof of (\ref{l10}), and we finish the  proof of  Lemma \ref{high1}.
\section{A reduction of short variational estimate}
\label{s5p}
\subsection{An equivalent definition of $V_r({\bf H}_j f)$}\label{equ21}
Making the  change of variable $u\to 2^ju$,
we can write  $V_r({\bf H}_jf)$
%(see Section \ref{redut1})
as
\beq\label{vr}
V_r({\bf H}_jf):=V_r\big( H^{(2^ju)}f: u\in [1,2]\big).
\eeq
 As the statements in Subsection \ref{othef},   we  choose to   bound
 the   $B_{r,1}^{1/r}$ norm of $\chi(u) H^{(2^ju)}f$,
 %where $B_{r,1}^{1/r}$ is an inhomogeneous Besov space
  where $\chi:\R\to [0,1]$ denotes   the  smooth function with compact support in $[2^{-1},2^2]$ and equals one on $[1,2]$.
  However,  it seems difficult to bound the $B_{r,1}^{1/r}$ norm of $\chi(u) H^{(2^ju)}f$  directly  since   the operator $H^{(2^ju)}$ behaves similarly to
a   singular integral operator  whenever $e^{i2^ju\g(|t|)}\sim 1$.
 %In fact, in this situation, the operator $H^{(2^ju)}$ is similar to  the truncated  singular integral operator    $\sum_{l> 0}H^{(2^j u)}_{l+j_*}$.
 More precisely,  if we  use the decomposition (recall \eqref{sig})
 \beq\label{m1}
 H^{(2^ju)}f(x)=\sum_{l\le 0}H^{(2^ju)}_{l+j_*}f(x)
+\sum_{l> 0}H^{(2^ju)}_{l+j_*}f(x),
\eeq
 it will be difficult to bound the associated estimate for the first term on the right-hand side of (\ref{m1}).
 To overcome this difficulty, motivated by the arguments below (\ref{tri1}), for each $u$, we
 substitute $H^{(2^ju)}f(x)$ by $H^{(2^ju)}f(x)+\mathcal{A}_1$, where  $\mathcal{A}_1=-\sum_{l\le 0}H^{(0)}_{l+j_*}f(x)$. This substitution yields
a new operator  $T_{j,l}[u]$ which can be described  by
\beq\label{dft1}
T_{j,l}[u]f(x):=\int f(x-t)(e^{i2^ju\g(|t|)}-1) \frac{1}{2^{n(l+j_*)}}\tilde{\psi}(\frac{t}{2^{l+j_*}})dt,
\eeq
where $\tilde{\psi}$ is given by (\ref{smoo}).
Consequently,
we can  replace $H^{(2^ju)}f$ in (\ref{vr}) by %$H_\circ^{(2^ju)}f$ defined by
\beq\label{tc}
%H^{(2^ju)}=H_\circ^{(2^ju)}
%+\sum_{l\le 0}H^{(0)}_{l+j_*},\ \ {\rm where}\ \
H_\circ^{(2^ju)}f(x)=:
\sum_{l\le 0}
T_{j,l}[u]f(x)+ \sum_{l> 0}H^{(2^j u)}_{l+j_*}f(x).
\eeq
Now, we can    translate the estimate for $\sum_{l\le 0}H^{(2^j u)}_{l+j_*}f$ into an estimate for  $\sum_{l\le0}T_{j,l}[ u]f$.  Moreover, following   the arguments handling  the second term on the right-hand side of (\ref{zo1}), we  can obtain
the desired estimate for $T_{j,l}[ u]$ (see Lemma \ref{l1} below).
Finally, we  rewrite  $V_r({\bf H}_jf)$ in a new form
\beq\label{edf}
V_r({\bf H}_jf)=V_r\big(H^{(2^ju)}_\circ f:u\in [1,2]\big).
\eeq
%Utilizing the logic that generates the estimate of the second term on the right-hand side of (\ref{zo1}) can lead to the required estimate for $T_{j,l}[2^j u]$.
\subsection{Reduction of Proposition \ref{short}}
\label{r5.2}
Proposition \ref{short} is a direct result of   Lemmas \ref{l1} and \ref{l2} below.
\begin{lemma}\label{l1}
For $l\le 0$, $r\in [2,\infty]$ and $p\in(1,\infty)$.
Suppose  the function $\g\in \U$.
Then there exists a positive constant  $\e$ such that
\beq\label{aii1}
\Big\|\|\chi(u) T_{j,l}[u]f\|_{B_{r,1}^{1/r}(u\in\R)}\Big\|_{L^p(\ell^r_j)}\les 2^{ \e l} \|f\|_p.
\eeq
%where $T_{j,l}[u]$ is given by (\ref{dft1}).
\end{lemma}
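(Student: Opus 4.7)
The plan is a Taylor-series approach that exploits smallness of the phase $2^j u\g(|t|)$ on the $t$-support. For $u\in[1/2,4]$ and $|t|\sim 2^{l+j_*}$ with $l\le 0$, the defining property $2^j\g(2^{j_*})\le 1$ of $j_*$ together with (\ref{p9}) yields $2^ju\g(|t|)\lesssim 2^{\A_0 l}\le 1$. Expand
$$e^{i2^ju\g(|t|)}-1=\sum_{m\ge 1}\frac{(i2^ju\g(|t|))^m}{m!},$$
factor out $(2^j\g(2^{l+j_*}))^m$, and decompose
$$T_{j,l}[u]f(x)=\sum_{m\ge 1}\frac{(iu)^m}{m!}\,(2^j\g(2^{l+j_*}))^m\,\mathcal{T}_{j,l,m}f(x),$$
where the $u$-independent operator
$$\mathcal{T}_{j,l,m}f(x):=\int f(x-t)\Big(\tfrac{\g(|t|)}{\g(2^{l+j_*})}\Big)^m\,\tfrac{1}{2^{n(l+j_*)}}\tilde{\psi}\!\Big(\tfrac{t}{2^{l+j_*}}\Big)\,dt$$
has an $L^\infty$-normalized kernel whose relevant Sobolev norm is of order $C^m$ by the conditions (\ref{curv})--(\ref{smooth}). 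By (\ref{p9}) the scalar factors satisfy $(2^j\g(2^{l+j_*}))^m\lesssim 2^{\A_0 l m}$, which will provide the geometric gain in $l$.

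The $u$-dependence is then isolated: since $\chi(u)u^m$ is smooth and compactly supported in $u$, a routine computation yields $\|\chi(u)u^m\|_{B_{r,1}^{1/r}(\R)}\le C^m$ for a constant $C$ independent of $m$. Minkowski's inequality applied to the Taylor series gives, pointwise in $(x,j)$,
$$\|\chi(u)T_{j,l}[u]f(x)\|_{B_{r,1}^{1/r}(u)}\le \sum_{m\ge 1}\frac{C^m}{m!}\,(2^j\g(2^{l+j_*}))^m\,|\mathcal{T}_{j,l,m}f(x)|.$$

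The remaining core step is the vector-valued estimate $\|\mathcal{T}_{j,l,m}f\|_{L^p(l^r_j)}\le C_\sharp^m\,\|f\|_p$ for all $p\in(1,\infty)$ and $r\in[2,\infty]$, with $C_\sharp$ independent of $j,l$. By interpolation between $r=\infty$ and $r=2$ it suffices to treat these endpoints. The case $r=\infty$ follows from the pointwise majorization $|\mathcal{T}_{j,l,m}f|\lesssim C^m Mf$. For $r=2$ we follow the argument that yields the claim (\ref{as2}) in the proof of Lemma \ref{low1}: the cancellation $\int_{\mathbb{S}^{n-1}}\Omega=0$ gives the multiplier of $\mathcal{T}_{j,l,m}$ the decay
$$\lesssim C^m\,\min\!\big(|2^{l+j_*}\xi|,\,|2^{l+j_*}\xi|^{-1}\big);$$
one then combines Plancherel (for $p=2$), the Fefferman--Stein inequality, and a Littlewood--Paley decomposition reindexed along the sparse sequence $\{j_*\}$ via Lemma \ref{l2.1}, together with interpolation for general $p\in(1,\infty)$.

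Assembling the pieces,
$$\Big\|\|\chi(u)T_{j,l}[u]f\|_{B_{r,1}^{1/r}(\R)}\Big\|_{L^p(l^r_j)}\lesssim \sum_{m\ge 1}\frac{(CC_\sharp)^m}{m!}\,2^{\A_0 lm}\,\|f\|_p\lesssim 2^{\A_0 l}\,\|f\|_p,$$
which gives (\ref{aii1}) with, say, $\e=\A_0$. The main obstacle is the $l^2_j$ vector-valued bound on $\mathcal{T}_{j,l,m}$: the natural frequency scale $2^{-(l+j_*)}$ of this operator depends on $j$, so the Littlewood--Paley decomposition of $f$ must be reindexed along the sparse mapping $j\mapsto j_*$ of Lemma \ref{l2.1}(ii), and the $m$-dependence of all constants must be tracked with only geometric growth so that the Taylor series converges absolutely.
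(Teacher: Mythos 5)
Your proposal is correct and follows essentially the same route as the paper: Taylor expansion of $e^{i2^ju\g(|t|)}-1$, factoring out $(2^ju\g(2^{l+j_*}))^m\lesssim 2^{\A_0 lm}$ via (\ref{p9}), bounding $\|\chi(u)u^m\|_{B_{r,1}^{1/r}}\le C^m$, and a vector-valued square-function estimate for the mean-zero, $C^m$-normalized convolution kernels (your $\mathcal{T}_{j,l,m}f$ is exactly the paper's $f*\rho_{m,l+j_*}$). The only cosmetic difference is that the paper reduces to $r=2$ at the outset via the embeddings $B_{2,1}^{1/2}\hookrightarrow B_{r,1}^{1/r}$ and $l^2\hookrightarrow l^r$, whereas you interpolate between $r=2$ and $r=\infty$.
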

\begin{lemma}\label{l2}
For $l\ge 1$, $p\in(1,\infty)$, $r\in(2,\infty)$ and $r>p'/n$.
Assume the function $\g\in \U$.
Then there exists a positive constant  $\e$ such that
$$
\Big\|\|\chi(u) H^{(2^j u)}_{l+j_*}f\|_{B_{r,1}^{1/r}(u\in\R)}\Big\|_{L^p(\ell^r_j)}\les 2^{ -\e l} \|f\|_p.
$$
%where $H^{(2^j u)}_{l+j_*}$ is defined by (\ref{sig}).
\end{lemma}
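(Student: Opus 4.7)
The plan is to prove Lemma~\ref{l2} by establishing the required vector-valued Besov estimate on two overlapping open sub-regions of the target $(1/p,1/r)$-trapezoid — the rectangle AODB and the triangle CEF in Figure~\ref{fig:1} — and then interpolating. As a first reduction, I would use the standard Littlewood--Paley characterization of the Besov norm in $u$, writing $\|g\|_{B^{1/r}_{r,1}(\R)} \lesssim \sum_{k\ge 0} 2^{k/r}\|\Delta_k^{(u)} g\|_{L^r(\R)}$ with smooth $u$-frequency projectors $\Delta_k^{(u)}$. On the support of $\psi_{l+j_*}(t)$ the phase $2^j u\g(|t|)$ has $\p_u$-derivative $\sim 2^j\g(2^{l+j_*})\in[2^{\A_0 l-\A_1},2^{\A_1 l}]$ by (\ref{p9}), so its natural oscillation frequency in $u$ sits in the dyadic band $k\in[\A_0 l,\A_1 l]$. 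Integration by parts in $u$ disposes of the contributions from $k\gg \A_1 l$ with rapid decay, and for $k\ll \A_0 l$ the inequality (\ref{rou1}) together with the pointwise bound (\ref{v2}) gives the needed control via the Hardy--Littlewood maximal operator. Thus the whole matter reduces to producing a gain of $2^{-\e l}$ on the resonant band $k\sim l$.

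On the rectangle AODB (corresponding to Proposition~\ref{short1}), where $p$ is large and $r>2$ is arbitrary, I would implement a Seeger-type square function argument: decompose the $\xi$-support of the multiplier $m_{j,l}(u,\xi)$ into angular sectors of aperture $\sim 2^{-l/2}$ about the critical direction of $t\mapsto 2^j u\g(|t|)-\xi\cdot t$. Stationary phase in $t$ then yields a factor $\sim 2^{-l(n-1)/2}$ on each sector, and combining $\xi$-orthogonality with Seeger's $L^p$-bound for the plate-type square function (available when $1/p<(n-1)/(2n+2)$) produces the target decay $2^{-\e l}$, uniformly in $u$ and $l^r$-summable in $j$.

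On the triangle CEF (corresponding to Proposition~\ref{short2}), where $p$ is close to $2n/(2n-1)$ and $r$ lies near the critical line $1/r=n/p'$, I would instead exploit the non-vanishing Gaussian curvature of the hypersurface $(t,2^j u\g(|t|))$. After rescaling $t\to 2^{l+j_*}t$, condition (\ref{curv}) with $j=2$ together with the dichotomy (S1)/(S2) from Subsection~\ref{Fursub1} provides the uniform pointwise multiplier bound (\ref{decy}), $|m_{j,l}(u,\xi)|\les 2^{-l(n-1)/2}$, on the resonant frequency annulus. Combining this with the Stein--Tomas restriction estimate for the associated dilated surface, then interpolating against the trivial $L^2\to L^2$ bound and the basic vector-valued control (\ref{basic}), yields the required estimate in the interior of CEF. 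Finally, vector-valued Riesz--Thorin interpolation of $f\mapsto \|\chi(u)H^{(2^j u)}_{l+j_*}f\|_{B^{1/r}_{r,1}(\R)}$ between the two open sub-regions recovers the full range $r>p'/n$, $1<p<\infty$, with gain $2^{-\e l}$, and summing in $l\ge 1$ gives the Proposition~\ref{short} estimate.

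The principal obstacle is achieving uniformity in $(j,l)$ of the stationary-phase and Stein--Tomas constants for a generic $\g\in\U$: unlike the homogeneous case of \cite{GRY20}, no scaling identity $\g(st)=\g(s)\g(t)$ is available to reduce to a single fixed hypersurface. The crux is to verify, using (\ref{curv}) and (\ref{smooth}), that the rescaled phase $2^j u\g(2^{l+j_*}|t|)$ has all derivatives in $t$ uniformly bounded on $|t|\sim 1$ and Gaussian curvature bounded below on its graph uniformly in $(j,l)$; this is precisely the crucial observation emphasized in Subsection~\ref{meth}. Once this uniformity is secured, both Propositions~\ref{short1} and~\ref{short2} are within reach along the lines above, and the interpolation glues them into Lemma~\ref{l2}.
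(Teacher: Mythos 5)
Your global architecture --- splitting into the rectangle AODB (large $p$, Proposition \ref{short1}) and the triangle CEF (small $p$, Proposition \ref{short2}) and interpolating, with stationary phase, a Seeger-type square function, restriction estimates for large $p$, and curvature-driven pointwise decay near the endpoint --- is exactly the paper's, but two of your steps have genuine gaps.

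First, nothing in your sketch produces the $l^r_j$-summability over $j\in\Z$: for a single $j$ your bounds are of order $\|Mf\|_p$, and the sum over $j$ of such bounds diverges. The paper obtains the $j$-sum by (a) decomposing in the \emph{spatial} frequency via $P_{k-j_*}$ and showing by non-stationary phase (Lemma \ref{4l}) that only the band $|k-m|\le\mathcal{C}$ contributes, where $2^{m+l}\sim\g(2^{l+j_*})/\g(2^{j_*})$ as in (\ref{slim}); (b) exploiting the sparseness of $j\mapsto j_*$ (Lemma \ref{l2.1}) to get Littlewood--Paley orthogonality in $j$; and (c) partitioning $j$ into the $O(l)$ sets $S_{l,m}$ so that no range of $p$ is lost in the interpolations. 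Your Littlewood--Paley reduction in the $u$-variable does not substitute for this. Moreover, your treatment of the low $u$-frequencies via (\ref{rou1}) fails: that inequality bounds the variation by $\|\p_u(\cdot)\|_{L^1_u}\sim 2^j\g(2^{l+j_*})\,Mf\sim 2^{m+l}Mf$, which is a large upper bound, not a gain. The paper instead uses the interpolation inequality (\ref{cha1}), paying a factor $2^{(m+l)/r}$ that must then be beaten by the decay of the $L^r_u$ norm.

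Second, your claimed stationary-phase decay $2^{-l(n-1)/2}$ is incorrect. The graph $(t,\g(|t|))$, $t\in\R^n$, has $n$ non-vanishing principal curvatures (the radial Hessian has eigenvalues $\g''$ and $\g'/|t|$, all nonzero by (\ref{curv})), so stationary phase in all $n$ variables gives $2^{-n(m+l)/2}$, which is (\ref{decy}). The sharp line $1/r=n/p'$ hinges on this exponent: interpolating the $L^2$ bound with gain $2^{-n(m+l)/2}$ from Plancherel, i.e.\ (\ref{72}), against the trivial $\mathbb{H}^1$ bound (\ref{71}) gives $2^{-n(m+l)/p'}$, and after paying $2^{(m+l)/r}$ one arrives at $2^{(m+l)(1/r-n/p')}$, which is (\ref{Go2}); with exponent $(n-1)/2$ you would only reach $1/r<(n-1)/p'$ and miss the stated range. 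Relatedly, in the CEF region Stein--Tomas is neither needed nor available for all $n\ge2$ at $p$ near $2n/(2n-1)$ (for $n=2$ it requires $p'\ge 6$ while $p'$ is near $4$); the paper's point there --- emphasized as a key observation --- is that pointwise decay plus elementary interpolation suffices, with restriction estimates reserved for the large-$p$ regime $p>2(n+1)/(n-1)$.
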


We proceed to establish Proposition \ref{short}, assuming the validity of Lemmas \ref{l1} and \ref{l2}.
\begin{proof}[Proof of Proposition \ref{short}]
By (\ref{edf}),  it  now suffices to show the desired short variation-norm estimate for $H^{(2^ju)}_\circ$.  Utilizing (\ref{tc}), $B_{r,1}^{1/r}\hookrightarrow V_r$ and Minkowski's inequality,  we can achieve this estimate  by directly using  Lemmas \ref{l1} and \ref{l2}.
\end{proof}

It remains to show Lemmas \ref{l1} and \ref{l2}.  It is worth noting that $T_{j,l}[u]f$ is related to an essentially non-oscillatory integral. To elaborate further, expressing $T_{j,l}[u]f$ as the sum of convolutions with kernels meeting mean value zero and desired decay estimates allows us to establish its corresponding square-function estimate using Lemma \ref{l2.1}.
However, proving Lemma \ref{l2}, a key innovation in this paper, is more intricate. This complexity arises because we are currently dealing with an oscillatory integral, and its estimate necessitates the application of various techniques, including the Stein-Tomas restriction theorem. Therefore, in this section, we present the proof of the former, while the proof of the latter is deferred to the next section.
%Indeed,  this proof depends on many techniques including the crucial Stein-Tomas restriction theorem.
%The techniques used to prove Lemma \ref{l2} includes  the known Stein-Tomas restriction theorem.
%The proof of  Lemma \ref{l2}, however, is more complicated since  we   need
%to achieve the crucial decay from estimating  a corresponding oscillatory integral.
%%
 %However, the proof of  Lemma \ref{l2} given in the next section is more complicated since  we   need
%to achieve the crucial decay from estimating  the corresponding oscillatory integral, where techniques such as Stein-Tomas restriction theorem for the surface and Strichartz estimate will be applied.
\vskip.1in
\begin{proof}[Proof of Lemma \ref{l1}]
It suffices to show the case  $r=2$ because $B_{2,1}^{1/2}\hookrightarrow B_{r,1}^{1/r}$ and
$\ell^2\hookrightarrow \ell^r$ for any $r>2$.
Using   (\ref{p9}), $1\le u\le 2$ and  $|t|\sim 2^{l+j_*}\les 2^{j_*}$, we have $2^ju\g(|t|)\les 1$. Then we have
 Taylor's expansion $e^{i2^ju\g(|t|)}=\sum_{m=0}^\infty\frac{(i2^ju\g(|t|))^m}{m!} $, which gives
%Since $2^ju\g(|t|)\les 1$ which follows from  (\ref{p9}), $1\le u\le 2$ and  $|t|\sim 2^{l+j_*}$,
$$
\begin{aligned}
T_{j,l}[u]f(x)=&\ \sum_{m=1}^\infty \int f(x-t)\frac{(i2^ju\g(|t|))^m}{m!}  \frac{1}{2^{n(l+j_*)}}\tilde{\psi}(\frac{t}{2^{l+j_*}})dt\\
=&\ \sum_{m=1}^\infty \frac{i^m}{m!}\big(2^ju\g(2^{l+j_*})\big)^m  (f*\rho_{m,l+j_*})(x),
\end{aligned}
$$
where  $\rho_{m,l+j_*}$ is a smooth function defined by
$$\rho_{m,l+j_*}(t)=\frac{1}{2^{n(l+j_*)}}\rho_m(\frac{t}{2^{l+j_*}}),\ \  \rho_m(t)=\left(\frac{\g(2^{l+j_*}|t|)}{\g(2^{l+j_*})}\right)^m  \tilde{\psi}(t).$$
%Observe that the function $\rho_m$ with mean value zero
% is supported in $\{t\in\R^n:\ |t|\sim 1\}$,
By (\ref{smoo}),  $\int_{\mathbb{S}^{n-1}}\Omega(x)d\sigma(x)=0$ and (\ref{p9}),   we deduce 
\beq\label{rh}
\int \rho_m(t) dt=0\quad{\rm and}\quad |\rho_m(t)|+|\na\rho_m(t)|\les C^m(1+|t|)^{-n-1}
\eeq
for some uniform $C>0$.  In what follows, the uniform constant $C$ may vary at each appearance.
Using (\ref{rh}), we may further deduce
\beq\label{squ1}
\|f*\rho_{m,l+j_*}\|_{L^p(\ell^2_j)}\les C^m \|f\|_p.
\eeq
In addition,  by
$\||\cdot|^m\chi(\cdot)\|_{B_{2,1}^{1/2}}\les C^m$ and (\ref{p9}),
the left-hand side of (\ref{aii1}) is majorized   (up to an absolute constant) by
\beq\label{squ2}
 2^{\A_0 l}\sum_{m\ge 1}\frac{C^m}{m!}
\|f*\rho_{m,l+j_*}\|_{L^p(\ell^2_j)}
\eeq
with $\A_0$  as in (\ref{p8}). In the end,  we achieve  (\ref{aii1}) with $\e=\A_0$   by  inserting  (\ref{squ1}) into (\ref{squ2}).% and taking  $\e=\A_0$.
\end{proof}
To finish the proof of  Proposition \ref{short}, it remains to prove Lemma \ref{l2}.
\section{Short variation-norm estimate: proof of Lemma \ref{l2}}
\label{Pftheorem1}
In this section, the proof of  Lemma \ref{l2} is reduced to  establishing Propositions \ref{short2} and \ref{short1} below. These propositions will be demonstrated by leveraging the combination of Lemma \ref{4l} with Lemma \ref{400l} and Lemma \ref{51}, respectively. The proofs of these lemmas employ diverse techniques, including the method of stationary phase (see Subsection \ref{sdef1}), Stein-Tomas restriction estimates, a square-function estimate (see Lemma \ref{6l}), and a localization technique (see Lemma \ref{local}).
%, which is the key part in proving  short variation-norm estimate.
\subsection{Useful notations}\label{fenno}
Let $j\in\Z$, $l\ge 1$, $r\in [2,\infty)$,  $p\in[1,\infty)$,   let $S$ be a subset of  $\Z$, and denote
\beq\label{nn}
\begin{aligned}
\h_{j,l}f(u,x):=&\ \chi(u) H^{(2^j u)}_{l+j_*}f(x),\quad(u,x)\in \R\times\R^n,\\
\|\h_{j,l}f\|_{W_{r,p}(j\in S)}:=&\ \big\|\|\h_{j,l}f(u,x)\|_{B_{r,1}^{1/r}(u\in\R)}\big\|_{L^p(x\in\R^n;\ell^r_j(S))},\\
\|\h_{j,l}f\|_{w_{r,p}(j\in S)}:=&\ \big\|\|\h_{j,l}f(u,x)\|_{L^r(u\in\R)}\big\|_{L^p(x\in\R^n;\ell^r_j(S))}.
\end{aligned}
\eeq
%in particular, when $S=\Z$, we write
%$$
%\begin{aligned}
%\|\h_{j,l}f\|_{W_{r,p}}:=&\ %\big\|\|\h_{j,l}f\|_{B_{r,1}^{1/r}(\R)}\big\|_{L^p(\ell^r_j(\Z))}\ {\rm and}\\
%\|\h_{j,l}f\|_{w_{r,p}}:=&\ %\big\|\|\h_{j,l}f\|_{L^r(\R)}\big\|_{L^p(l^r_j(\Z))}.
%\end{aligned}
%$$
  Note that we may apply the interpolation inequality
 \beq\label{cha1}
 \|\h_{j,l}f(\cdot,x)\|_{B_{r,1}^{1/r}}
\les \|\h_{j,l}f(\cdot,x)\|_r+\|\h_{j,l}f(\cdot,x)\|_r^{1-1/r}\|\h_{j,l}f(\cdot,x)\|_{\dot{W}^{1,r}}^{1/r}
\eeq
to establish a connection between the ${W_{r,p}(j\in S)}$ norm and the ${w_{r,p}(j\in S)}$ norm, where $\dot{W}^{1,r}$ represents the homogeneous Sobolev space defined by $\|f\|_{\dot{W}^{1,r}}=\|\nabla f\|_r$. Additionally, we will make use of interpolations involving the Hardy space $\mathbb{H}^1(\R^n)$, addressing the endpoint case $p=1$.
\subsection{Reduction of Lemma \ref{l2}}\label{rel2}

The proof of Lemma \ref{l2} will be reduced  to establishing Propositions \ref{short2} and \ref{short1} below. As we will see later, each proof is founded on the following decomposition:
\beq\label{lo1}
\h_{j,l}f=\sum_{k\in\Z}\h_{j,l}P_{k-j_*}f,
\eeq
 and    the studies  of the associated multiplier
 \beq\label{mui0}
m_{k,j,l}(\xi):=\psi(\frac{\xi}{2^{k-j_*}})\int e^{iA_{j,l}^u(t,\xi)}\tilde{\psi}(t)dt
\eeq
with the phase $A_{j,l}^u(t,\xi)$  given by
\beq\label{Adef}
A_{j,l}^u (t,\xi):=u2^j\g(2^{l+j_*}|t|)-2^{l+j_*}t\cdot\xi.
\eeq
Keep in mind that  the hyper-surface $(t,2^j\g(2^{l+j_*}|t|))$ has non-vanishing Gaussian curvature whenever $\g\in \U$ and $|t|\sim 1$.
Note that we can rewrite $\h_{j,l}P_{k-j_*}f$  as a convolution, that is,
\beq\label{k11}
\h_{j,l}P_{k-j_*}f(u,x)=\chi(u)(f*_xK_{k,j,l})(u,x),
\eeq
where the kernel function $K_{k,j,l}$ is given by
$$K_{k,j,l}(u,x):=2^{n(k-j_*)}
\iint\psi(\xi)
e^{i{2^{k-j_*}}\xi\cdot(x-2^{l+j_*}t)+i2^ju\g(2^{l+j_*}|t|)}\tilde{\psi}(t)dtd\xi.
$$
%A_{j,l}(t,2^{k-j_*}\xi)
%$\h_{j,l}P_{k-j_*}f=\chi(u)f*K_u.$
By
integrating by parts in $\xi$, we  can infer that    for any $N\in\N$,
\beq\label{do1}|K_{k,j,l}(u,x)|\les_{N} \frac{2^{n(k-j_*)}}{(1+|2^{k-j_*}x|^{2N})}
\les_N 2^{-N(k+l)} \E_{k-j_*,N}(x)
\eeq
whenever $  |x|\ge 2^{l+j_*+10}$. See  \eqref{bei} below  for the definition of  $\E_{k-j_*,N}$.  

In the following context, contrary to the earlier discussions, we cannot straightforwardly apply the upper bound in (\ref{p8}) to quantify the term
$$\g_{lj_*}(1):={\g(2^{l+j_*})}/{\g(2^{j_*})}$$
or analogous terms. Otherwise, there is an inevitable risk of losing some range of values for $p$ when employing interpolations.
To bridge this gap, our approach involves partitioning the index set $\{j : j \in \Z\}$ in the sum into a union of subsets $\{S_{l, m}\}_{{m \in \mathcal{P}_l}}$, and applying the more accurate estimate $\g_{lj_*}(1) \sim 2^{m+l}$ within each subset $S_{l, m}$,
where the sets
 $ S_{l,m}, \mathcal{P}_l$ are defined respectively   by
\begin{align}
 S_{l,m}:=&\ \{j\in\Z:\ 2^{m+l}\le \g_{lj_*}(1)<2^{m+l+1}\},\label{slim}\\
\mathcal{P}_l:=&\ \{m\in\Z:\ \A_0l \le m+l\le \A_1 l \}.\label{Go5}
\end{align}
In the above arguments we  confined the values of $m$ to a smaller set $\mathcal{P}_l$ instead of considering the entire set $\Z$, based on (\ref{p8}). Consequently, we can reduce
the proof of Lemma \ref{l2} to establishing the validity of the following propositions: Propositions \ref{short2} and \ref{short1}. These propositions address scenarios involving small $p$ (near the endpoint) and large $p$, respectively.
\begin{prop}\label{short2}
Let $n\ge 2$, $l\ge 1$,  $m\in\mathcal{P}_l$ with $\mathcal{P}_l$ defined by (\ref{Go5}), and let $S_{l,m}$ be the set defined by (\ref{slim}). Suppose $1<p<2n/(2n-1)$ and  $p'/n<r\le p'$. Then
\beq\label{Go2}
%\big\|\|\chi(u) H^{(2^j u)}_{l+j_*}f\|_{B_{r,1}^{1/r}(\R)}\big\|_{L^p(\ell^r_j)}
\|\h_{j,l}f\|_{W_{r,p}(j\in S_{l,m})}\les 2^{(m+l)(1/r-n/p') } \|f\|_p,
\eeq
where the implicit constant  is independent of $m,l$.
\end{prop}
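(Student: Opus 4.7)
The plan is to combine a Littlewood--Paley decomposition with the method of stationary phase and the Stein--Tomas restriction theorem. On the set $S_{l,m}$ the normalization $2^j\g(2^{l+j_*})\sim 2^{m+l}$ holds, so after the substitution $t=2^{l+j_*}s$ the operator takes the form
\[
\h_{j,l}f(u,x) = \chi(u)\int \tilde\psi(s)\, e^{i\lambda u\phi_j(s)}\, f(x-2^{l+j_*}s)\,ds,
\]
with large parameter $\lambda := 2^{m+l}$ and normalized phase $\phi_j(s):=2^j\g(2^{l+j_*}|s|)/2^{m+l}\sim 1$ on $|s|\sim 1$. The conditions (\ref{curv})--(\ref{smooth}) together with the fixed sign of $\g''$ (case S1 or S2) guarantee that the hypersurface $\{(s,\phi_j(s)):|s|\sim 1\}\subset\R^{n+1}$ has non-vanishing Gaussian curvature, with lower bounds uniform in $j\in S_{l,m}$ and in $l$.

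First I would reduce the $B_{r,1}^{1/r}$-norm in $u$ to the $L^r$-norm via the interpolation (\ref{cha1}). Differentiation in $u$ brings down a factor of $\lambda=2^{m+l}$ from the oscillation $e^{i\lambda u\phi_j(s)}$, which, after (\ref{cha1}), yields a factor of $2^{(m+l)/r}$. Thus it suffices to prove the companion estimate
\[
\|\h_{j,l}f\|_{w_{r,p}(j\in S_{l,m})} \lesssim 2^{-(m+l)n/p'}\|f\|_p
\]
(and an analogous bound for a harmless variant of $\h_{j,l}$ whose symbol gains one bounded factor of $\phi_j$).

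Next I would decompose $\h_{j,l}f = \sum_k \h_{j,l}P_{k-j_*}f$ and analyze the multiplier $m_{k,j,l}$ from (\ref{mui0}). The gradient of $A^u_{j,l}$ in $t$ has size $\sim 2^{m+l}$ from the $\g$-term (via (\ref{curv}), (\ref{smooth}) at $j=1$ and the normalization on $S_{l,m}$) and $\sim 2^{k+l}$ from the linear term. Hence for $|k-m|$ larger than an admissible constant, repeated integration by parts in $t$ gives arbitrary decay, exactly as in the proof of Lemma \ref{high1}, and these non-critical $k$ sum absolutely. For the critical $k\sim m$, the scaled multiplier
\[
m_{k,j,l}(2^{k-j_*}\eta) = \int e^{i\lambda(u\phi_j(s) - s\cdot\eta)}\tilde\psi(s)\,ds
\]
is a Fourier transform of a surface-supported measure on $\{(s,\phi_j(s))\}\subset\R^{n+1}$. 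Because this $n$-dimensional hypersurface has $n$ non-vanishing principal curvatures, the Stein--Tomas extension theorem applies with gain $\lambda^{-(n+1)/p'}$ for $p' \geq 2(n+2)/n$. The hypothesis $p' > 2n$ is strictly inside the Stein--Tomas range since $2n > 2(n+2)/n$ for all $n\ge 2$.

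A $TT^*$ argument combined with the Fourier localization $|\xi|\sim 2^{k-j_*}$ and the change of variables $\xi\mapsto 2^{k-j_*}\eta$ then converts the extension estimate into an $L^p\to L^{p'}$ bound for $\h_{j,l}P_{k-j_*}$. Tracking the Jacobian in the spatial variable $x$ (through the factor $2^{l+j_*}$) gives the gain $2^{-(m+l)n/p'}$ as claimed. The $l^r_j$ sum over $j\in S_{l,m}$ costs only a bounded factor: by the sparse property of $\Phi$ from Lemma \ref{l2.1}, $j_*$ is almost constant on $S_{l,m}$, so $S_{l,m}$ is effectively a single dyadic block in $j$. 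Finally, interpolation with the trivial $L^2\to L^2$ estimate (from Plancherel) extends the bound from the Stein--Tomas corner $r=p'$ down to the full range $p'/n < r \le p'$, producing the desired exponent $2^{(m+l)(1/r - n/p')}$.

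The principal obstacle is the Stein--Tomas step: we must verify that the Gaussian curvature of $\{(s,\phi_j(s))\}$ is bounded below uniformly in $j\in S_{l,m}$ and in $l$, extracting quantitative two-sided bounds for $\phi_j''$ on $|s|\sim 1$ from (\ref{curv})--(\ref{smooth}). A secondary subtlety is the accurate tracking of scales through the $TT^*$ step and the Besov--$L^r$ interpolation, so that precisely the exponent $1/r - n/p'$, rather than some nearby quantity, emerges at the end; handling the endpoint $r = p'$ requires care because the Stein--Tomas inequality is itself an endpoint estimate.
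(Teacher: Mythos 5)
Your overall architecture (reduce $B_{r,1}^{1/r}$ to $L^r$ via (\ref{cha1}) at the cost of $2^{(m+l)/r}$, decompose with $P_{k-j_*}$, kill non-critical $k$ by integration by parts, and isolate the critical annulus $k\sim m$ where the curvature of $(t,2^j\g(2^{l+j_*}|t|))$ matters) matches the paper. But your treatment of the critical piece diverges from the paper's and contains two genuine gaps.

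First, the claim that the $\ell^r_j$-sum over $S_{l,m}$ ``costs only a bounded factor'' because ``$j_*$ is almost constant on $S_{l,m}$'' is false. By (\ref{slim}), membership of $j$ in $S_{l,m}$ is governed by $\g(2^{l+j_*})/\g(2^{j_*})$, and for a homogeneous phase $\g(s)=s^\A$ this ratio equals $2^{\A l}$ independently of $j$, so $S_{l,m}$ is all of $\Z$ for one value of $m$; the frequency supports of $P_{k-j_*}f$ for $j\in S_{l,m}$ then sweep out essentially all dyadic scales. The sum over $j$ therefore cannot be dismissed: the paper controls it at $p=2$ by Plancherel together with the sparse Littlewood--Paley inequality (\ref{ineq1}), and at the $p=1$ endpoint by the vector-valued maximal bound (\ref{nod}). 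Second, the Stein--Tomas/$TT^*$ step produces an $L^2$-based (or $L^p\to L^{p'}$) estimate, and your conversion back to $L^p\to L^p$ for $p$ near $1$ by ``tracking the Jacobian'' is exactly where the argument would break: localizing to spatial cubes of side $2^{l+j_*}$ and applying H\"older to pass from $L^{p'}$ to $L^p$ incurs a $j$-dependent loss $2^{(l+j_*)n(1/p-1/p')}$ that you never show is absorbed. (The paper's localization lemma, Lemma \ref{local}, runs in the opposite direction, from $L^2$ data up to $L^p$ with $p\ge2$, and is used only for Proposition \ref{short1}.)

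The paper's actual proof of this proposition is much more elementary and avoids restriction theory entirely: for $|k-m|\le\mathcal{C}$ it interpolates the trivial $\mathbb{H}^1\to L^1$ bound (gain $1$, from $|\h_{j,l}f|\les Mf$ and (\ref{nod})) with the $L^2\to L^2$ bound carrying the gain $2^{-n(m+l)/2}$, which comes from Plancherel and the pointwise stationary-phase decay $|m_{k,j,l}(\xi)|\les 2^{-n(m+l)/2}$ in (\ref{decy}); interpolation at $1/p=1-\theta/2$ gives exactly $2^{-n(m+l)/p'}$, and (\ref{cha1}) then yields $2^{(m+l)(1/r-n/p')}$. This is precisely the ``crucial observation'' advertised in the introduction: in the regime $1<p<2n/(2n-1)$ no Stein--Tomas input is needed. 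Stein--Tomas enters only in Proposition \ref{short1}, for $p>2(n+1)/(n-1)$, and there as a restriction to the sphere $\mathbb{S}^{n-1}\subset\R^n$ inside a $TT^*$ argument, not as an extension estimate for the graph in $\R^{n+1}$. You should replace the restriction-theoretic core of your argument with this interpolation, and in any case supply a correct treatment of the $j$-summation.
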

\begin{prop}\label{short1}
Let $n$, $l$, $m$ and $S_{l,m}$ be as in Proposition \ref{short2}. Suppose  $ 2\le r<\infty$ and ${2(n+1)}/{(n-1)}< p<\infty$. Then there exists a positive constant  $\e$ such that
\beq\label{ca1}
%\big\|\|\chi(u) H^{(2^j u)}_{l+j_*}f\|_{B_{r,1}^{1/r}(\R)}\big\|_{L^p(\ell^r_j)}
\|\h_{j,l}f\|_{W_{r,p}(j\in S_{l,m})} \les 2^{- (m+l)\e}\|f\|_p,
\eeq
where the implicit constant  is independent of $m,l$.
\end{prop}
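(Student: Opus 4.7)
The plan is to combine a dyadic frequency decomposition with the method of stationary phase and a Stein-Tomas restriction estimate. Decompose $\h_{j,l}f=\sum_{k\in\Z}\h_{j,l}P_{k-j_*}f$ as in (\ref{lo1}) and examine the multiplier $m_{k,j,l}$ in (\ref{mui0}). Its phase $A_{j,l}^u(t,\xi)$ has gradient $\na_t A_{j,l}^u(t,\xi)=u\,2^{j+l+j_*}\g'(2^{l+j_*}|t|)\,t/|t|-2^{l+j_*}\xi$, and the membership $j\in S_{l,m}$ together with the comparability $s\g'(s)\sim\g(s)$ coming from (\ref{curv}) and (\ref{smooth}) yields $u\,2^{j+l+j_*}\g'(2^{l+j_*}|t|)\sim 2^{m+l}$. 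Consequently a critical point in $t$ can exist only in a bounded window of frequency scales $k$ around $m$; for $k$ outside that window, integrating by parts in $t$, exactly as in the treatment of the sums over $j\ge\nu l$ and $j\le-\nu l$ in Lemma \ref{high1}, produces rapid decay $O((2^{m+l})^{-N})$ for any $N$, and these contributions are summable with room to spare.

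For $k$ inside the critical window, the phase $A_{j,l}^u$ has a single non-degenerate critical point $t^*(\xi,u)$ whose Hessian determinant is comparable to $(2^{m+l})^n$: the radial second derivative contributes $\sim 2^{m+l}$ via (\ref{curv}) with $j=2$, exactly as in (\ref{a61}), while the $n-1$ tangential directions yield $(2^{m+l})^{n-1}$ from the non-vanishing Gaussian curvature of the rotationally symmetric graph $t\mapsto 2^{-j}\g(2^{l+j_*}|t|)$. This bookkeeping is the role of Lemma \ref{4l}. The method of stationary phase then gives
\beq\label{planSP}
m_{k,j,l}(\xi)=(2^{m+l})^{-n/2}\,a(\xi,u)\,e^{i\Phi_{j,l}(u,\xi)}+O\big((2^{m+l})^{-n/2-1}\big),
\eeq
with $a$ a symbol of order zero supported in $|\xi|\sim 2^{m-j_*}$ and $\p_u\Phi_{j,l}(u,\xi)=2^j\g(2^{l+j_*}|t^*|)\sim 2^{m+l}$. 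The critical-band contribution to $\h_{j,l}f$ is thereby, up to negligible terms, a Fourier integral operator in $(u,x)$ associated to a hypersurface of non-vanishing Gaussian curvature. Stein-Tomas restriction, applied after the natural rescaling and combined with its dual, furnishes the required $L^p_x$-bound for $p\ge 2(n+1)/(n-1)$, and interpolating with the trivial $L^\infty$ estimate produces a slack $(2^{m+l})^{-\e}$ throughout the open range $p>2(n+1)/(n-1)$; this is the content of Lemma \ref{51}. Combined with the $(2^{m+l})^{-n/2}$ gain from (\ref{planSP}) one obtains the $w_{r,p}$-estimate with exponential decay in $m+l$; aggregation in $j\in S_{l,m}$ and in $k$ is then performed through the sparseness of $S_{l,m}$, the square-function inequality of Lemma \ref{6l}, and the Littlewood-Paley theory on $\HH^p$ via Lemma \ref{l2.1}.

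To upgrade $w_{r,p}$ to $W_{r,p}$ I invoke the interpolation (\ref{cha1}) between $L^r$ and $\dot W^{1,r}$ in $u$. Because $\p_u\h_{j,l}f$ inherits an extra factor $2^j\g(2^{l+j_*})\sim 2^{m+l}$ from differentiating the phase, one has $\|\h_{j,l}f\|_{\dot W^{1,r}(u)}\les 2^{m+l}\|\h_{j,l}f\|_{L^r(u)}$ up to a harmless boundary contribution from $\chi'$, and consequently $\|\h_{j,l}f\|_{B^{1/r}_{r,1}(u)}\les 2^{(m+l)/r}\|\h_{j,l}f\|_{L^r(u)}$. The principal obstacle is to check that the Stein-Tomas gain in the critical band strictly dominates this $2^{(m+l)/r}$ loss with a positive margin; the strict inequality $p>2(n+1)/(n-1)$ supplies exactly this slack, via the strict positivity of the Stein-Tomas exponent away from the endpoint $2(n+1)/(n-1)$, yielding the desired $2^{-(m+l)\e}$ decay with the implicit constant uniform in $m$ and $l$.
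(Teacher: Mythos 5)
Your proposal follows essentially the same route as the paper's: Littlewood--Paley decomposition relative to the critical scale $m$, integration by parts off the critical band (Lemma \ref{4l} and (\ref{ms2})), stationary phase plus Stein--Tomas together with the sharp-function square-function estimate of Lemma \ref{6l} on the critical band (this is exactly Lemmas \ref{60} and \ref{51}), and the interpolation (\ref{cha1}) to pass from $L^r_u$ to $B^{1/r}_{r,1}$. Two small corrections: the Hessian bookkeeping is carried by the expansion (\ref{stat})--(\ref{decy}), not by Lemma \ref{4l} (which is the non-stationary case), and the decay $2^{-n(m+l)/p}$ is already present at $p=\tilde p_n=2(n+1)/(n-1)$ --- the strict inequality $p>\tilde p_n$ is needed for the Fefferman--Stein sharp-maximal argument and the geometric sums inside Lemma \ref{6l}, not to generate the exponent $\e$.
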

%\begin{prop}\label{shortad}
%Let $n=1$, and let $l,m,S_{l,m}$ be as in Proposition \ref{short2}. Suppose $2< r<\infty$ and  $2< p<\infty$. Then   we have
%\beq\label{ca2}
%\big\|\|\chi(u) H^{(2^j u)}_{l+j_*}f\|_{B_{r,1}^{1/r}(\R)}\big\|_{L^p(\ell^r_j)}
%\|\h_{j,l}f\|_{W_{r,p}(j\in S_{l,m})} \les l\|f\|_p,
%\eeq
%where the implicit constant  is independent of $m,l$.
%\end{prop}
With the above propositions in hand, we proceed to prove  Lemma \ref{l2}.
\begin{proof}[Proof of Lemma \ref{l2}]
By interpolating (\ref{Go2}) from Proposition \ref{short2} with (\ref{ca1}) from Proposition \ref{short1}, we can
show that for $l\ge 1$,  $p\in(1,\infty)$, $r\in(2,\infty)$ and $r>p'/n$,
\beq\label{indi}
\|\h_{j,l}f\|_{W_{r,p}(j\in S_{l,m})}
\les 2^{-(m+l)\e }\|f\|_p
\eeq
for some $\e>0$. In fact, one can see in Figure \ref{fig:1} that the ranges of $(p,r)$
in  Propositions \ref{short2} and \ref{short1}  are the interior of the rectangle AODB  and the interior of the triangle CEF, respectively.
Keeping  in mind  that
$\#\mathcal{P}_l\les l$, we can finish the proof of Lemma \ref{l2}
 by combining the simple inequality
$\|\h_{j,l}f\|_{W_{r,p}(j\in \Z)}\le \sum_{m\in\mathcal{P}_l}\|\h_{j,l}f\|_{W_{r,p}(j\in S_{l,m})}$
and (\ref{indi}).
\end{proof}

Note that  we may deduce  the estimate
\beq\label{nod}
\|\h_{j,l}P_{k-j_*}f\|_{w_{r,p}(j\in S_{l,m})}\les \|f\|_{\HH^p}
\eeq
whenever $n\ge 2$,  $r\in [2,\infty]$, $p\in [1,\infty)$ and $k,l\in\Z$.
Indeed, by using the straightforward inequality $|\h_{j,l}f|\les Mf$ and  the estimate for the vector-valued Hardy-Littlewood maximal function, the left-hand side of (\ref{nod}) can be bounded (up to a constant) by
$$ \|\chi\|_{L^\infty_u}\|M P_{k-j_*}f\|_{L^p(\ell^2_j)}\les \|P_{k-j_*}f\|_{\HH^p(\ell^2_j)},$$
 which establishes (\ref{nod}) through (\ref{ineq1}). Similarly, since \eqref{cha1} and $j\in S_{l,m}$,  
 we can also obtain   
\beq\label{nod-new1}
\|\h_{j,l}P_{k-j_*}f\|_{W_{r,p}(j\in S_{l,m})}\les 2^{(m+l)/r}\|f\|_{\HH^p},
\eeq 
which is   useful in the proof of Lemma \ref{4l}. 
Indeed, the additional growth $2^{(l+m)/r}$ comes from the estimate of the  semi-norm $\|\cdot\|_{\dot{W}^{1,r}}^{1/r}$ on the right-hand side of (\ref{cha1}). 
 % for $p\in [1,\infty)$. 

 \subsection{Stationary phase analysis}
\label{sdef1}
Recall the expression (\ref{Adef}) of the phase $A_{j,l}^u(t,\xi)$.
We have
$$|\na_t A_{j,l}^u(t,\xi)|=u2^{j+l+j_*}\g'(2^{l+j_*}|t|)
t|t|^{-1}-2^{l+j_*}\xi.$$
Following (\ref{p8}), (\ref{curv}) and (\ref{smooth}) with $j=1$, along with (\ref{slim}), we deduce  that there exists a uniform  constant ${\mathcal{C}}$ such that
 for  $|\xi|\sim2^{k-j_*}$, $l\ge 1$,   $u\in[1,2]$ and  $j\in S_{l,m}$,
\vspace{2pt}

  (i)~$A_{j,l}^u(\cdot,\xi)$ does not have any critical points whenever $|k-m|> {\mathcal{C}}$, and
\vspace{2pt}

 (ii)~  $A_{j,l}^u(\cdot,\xi)$ may possess    a  critical point whenever $|k-m|\le {\mathcal{C}}$.
 \vspace{2pt}

For the case (i) that we call the non-degenerate case,  we will attain the desired estimate through standard integration by parts techniques. Specifically, we  establish the following lemma:
 \begin{lemma}\label{4l}
Let $n$, $l$, $m$ and $S_{l,m}$ be as in Proposition \ref{short2}.  Suppose that $p\in(1,\infty)$, $r\in[2,\infty)$, and the integer $k$ satisfies
 $|k-m|>{\mathcal{C}}$. Then there is $c_p>0$ such that 
\beq\label{Go4}
\|\h_{j,l}P_{k-j_*}f\|_{W_{r,p}(j\in S_{l,m})}\les_N 2^{-N(m+l)}
2^{-N(k+l){\ind {k\ge -l}}+c_p(k+l){\ind {k<-l}}} \|f\|_p
\eeq
for any  $N\in\N$.
\end{lemma}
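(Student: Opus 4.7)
The plan is to establish a pointwise estimate for the multiplier $m_{k,j,l,u}(\xi)$ from \eqref{mui0} (with the $u$-dependence of $A_{j,l}^u$ made explicit), pass it to the $L^2$ operator norm via Plancherel, and then interpolate against the trivial $L^p$ bound to obtain the mixed-norm estimate \eqref{Go4} for all $p\in(1,\infty)$. The hypothesis $|k-m|>\mathcal{C}$ is precisely the non-degenerate regime of Subsection \ref{sdef1}: $A_{j,l}^u(\cdot,\xi)$ has no critical points on $|t|\sim 1$, $u\in\mathrm{supp}(\chi)$, $|\xi|\sim 2^{k-j_*}$.

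First I would prove the uniform pointwise bound
\[
|m_{k,j,l,u}(\xi)|\les_N \psi(\xi/2^{k-j_*})\,2^{-N(|k-m|+m+l)}
\]
for every $N\in\N$ and $u\in\mathrm{supp}(\chi)$, splitting into subcases according to which term in
\[
\na_t A_{j,l}^u = u\,2^{j+l+j_*}\,\g'(2^{l+j_*}|t|)\,\tfrac{t}{|t|}\,-\,2^{l+j_*}\xi
\]
dominates. In the case $k>m+\mathcal{C}$ the linear term dominates, so $|\na_t A_{j,l}^u|\sim 2^{l+k}$, and a direct integration by parts in $t$ (using the upper bounds on $\p_t^\alpha A_{j,l}^u$ coming from \eqref{curv}--\eqref{smooth}) yields $2^{-N(l+k)} = 2^{-N(|k-m|+m+l)}$. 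In the case $k<m-\mathcal{C}$ the radial term dominates with $|\na_t A_{j,l}^u|\sim 2^{m+l}$, and naive IBP produces only $2^{-N(m+l)}$; the missing $|k-m|$-decay must be extracted from $\int_{\mathbb{S}^{n-1}}\Omega\,d\sigma=0$. I would pass to polar coordinates $t=r\theta$ and rewrite the inner integral as
\[
\int_0^\infty r^{-1}\psi(r)\,e^{iu\,2^j\g(2^{l+j_*}r)}\,F(-2^{l+j_*}r\xi)\,dr,\qquad F(\eta):=\int_{\mathbb{S}^{n-1}}\Omega(\theta)\,e^{-i\eta\cdot\theta}\,d\sigma(\theta),
\]
and note that $F(0)=0$. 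Iterated radial IBP against the oscillating factor (whose phase derivative is $\sim 2^{m+l}$ on $r\sim 1$), combined with a Taylor expansion of $F$ around $0$, delivers the complementary bound $2^{-N|k-m|}$; interpolating the two pointwise bounds as a geometric mean and relabelling $N$ yields the desired $2^{-N(|k-m|+m+l)}$.

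Next, Plancherel's identity converts this into the $L^2_x$ operator estimate
\[
\|\h_{j,l}P_{k-j_*}f(u,\cdot)\|_{L^2_x}\les_N 2^{-N(|k-m|+m+l)}\,\|P_{k-j_*}f\|_{L^2_x}
\]
uniformly in $u$. Since $\chi$ has compact $u$-support, $\|\cdot\|_{L^r(u)}\lesssim\|\cdot\|_{L^\infty(u)}$, and applying the sparse Littlewood--Paley inequality \eqref{ineq1} to the map $j\mapsto k-j_*$ (which inherits the sparse condition from $\Phi$) gives the $L^2$ mixed-norm bound
\[
\|\h_{j,l}P_{k-j_*}f\|_{w_{r,2}(j\in S_{l,m})}\les_N 2^{-N(|k-m|+m+l)}\,\|f\|_2.
\]
The trivial pointwise estimate $|\h_{j,l}P_{k-j_*}f(u,x)|\les M(P_{k-j_*}f)(x)$, together with \eqref{nod}, furnishes the uniform bound $\|\h_{j,l}P_{k-j_*}f\|_{w_{r,p}(j\in S_{l,m})}\les\|f\|_p$ for every $p\in(1,\infty)$. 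Real interpolation between these two estimates, together with the freedom to absorb interpolation losses into the free parameter $N$, delivers \eqref{Go4} for arbitrary $p\in(1,\infty)$ and $r\in[2,\infty)$.

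The main obstacle is the case $k<m-\mathcal{C}$ of the pointwise multiplier estimate: the $|k-m|$ factor is absent from the naive non-stationary phase bound and must be recovered from the cancellation $\int_{\mathbb{S}^{n-1}}\Omega\,d\sigma=0$. Concretely, this requires careful bookkeeping of the interplay between the Taylor remainder of $F$ near $\eta=0$ and the radial IBP against $e^{iu\,2^j\g(2^{l+j_*}r)}$, so as to preserve \emph{both} the $2^{-N(m+l)}$ oscillatory gain and the $2^{-N|k-m|}$ cancellation gain simultaneously, without spoiling either factor through chain-rule losses in $\p_r^N F(-2^{l+j_*}r\xi)$.
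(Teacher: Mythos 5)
Your overall strategy (non-stationary phase in $t$ driven by the dichotomy of Subsection \ref{sdef1}, then upgrading to $L^p$) matches the paper's, but there is a genuine gap in how you extract the $|k-m|$-decay when $k<m-\mathcal{C}$. The paper splits $|k-m|>\mathcal{C}$ into \emph{three} regimes: $k>m+\mathcal{C}$, $-l\le k<m-\mathcal{C}$, and $k<-l$. In the middle regime no cancellation of $\Omega$ is needed at all: there $|k-m|=m-k\le m+l$, so the plain non-stationary bound $2^{-N'(m+l)}$ with $N'=2N$ already dominates $2^{-N(|k-m|+m+l)}$. Your proposed mechanism for that regime --- Taylor-expanding your $F(\eta)=\int_{\mathbb{S}^{n-1}}\Omega(\theta)e^{-i\eta\cdot\theta}d\sigma$ about $\eta=0$ --- cannot work there, because the argument $-2^{l+j_*}r\xi$ has magnitude $2^{l+k}\ge 1$, so $F(0)=0$ gives no smallness. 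Conversely, in the regime $k<-l$, where $2^{l+k}<1$ and the cancellation genuinely is needed, $F$ vanishes only to \emph{first} order at $0$ (only the zeroth moment of $\Omega$ is assumed to vanish), so you can extract exactly one factor of $2^{l+k}=2^{-|k+l|}$, not the $2^{-N|k-m|}$ you claim. Since $|k-m|+m+l=2(m+l)+|k+l|$ in that regime, one factor of $2^{-|k+l|}$ together with $2^{-N'(m+l)}$ for large $N'$ yields the stated bound only for $N=1$ (which is all the paper uses downstream, in (\ref{ms2})); your ``relabel $N$'' step is not available.

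The second issue is the passage from $L^2$ to $L^p$. The paper performs the integration by parts at the kernel level, writing $\h_{j,l}P_{k-j_*}$ as a rapidly decaying constant times a variant operator $\tilde{\h}_{j,l}\tilde{P}_{k-j_*}$ that obeys the maximal-function bound (\ref{noo1}) uniformly for \emph{all} $p\in(1,\infty)$ and $r\in[2,\infty)$, so the decay constant is inherited with no loss. Your route (Plancherel plus interpolation with the trivial bound (\ref{nod})) multiplies the exponent by a factor $\theta=\theta(p)<1$. That is harmless wherever the $L^2$ gain can be taken with arbitrarily large exponent, but in the regime $k<-l$ the single available power of $2^{-|k+l|}$ degrades to $2^{-\theta|k+l|}$, which no longer gives (\ref{Go4}) even with $N=1$. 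The resulting bound $2^{-\e(|k-m|+m+l)}$ for some $\e(p)>0$ would still suffice for the summation over $k$ in the proof of Proposition \ref{short2}, so your argument can be salvaged for the purposes of the paper; but to prove the lemma as stated you should follow the paper and perform the integration by parts on the kernel, so that the full decay multiplies an operator with uniform $w_{r,p}$ bounds rather than being filtered through an interpolation exponent.
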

 \begin{proof}[Proof of Lemma \ref{4l}]
%We can assume that $m+l>\mathcal{C}$, (\ref{Go4}) follows from  (\ref{nod}) otherwise.
%Then
By interpolation, \eqref{nod-new1} gives that it suffices to show \eqref{Go4} with  $p=r=2$.
Note that  $m+l>0$ and  $l\ge 1$. Using  $|\na_t A_{j,l}^u(t,\xi)|\gtrsim \max\{2^{m+l},2^{k+l}\}$ and the cancellation property $\int e^{i u2^j\g(2^{l+j_*}|t|)} \tilde \psi(t)dt=0$ (recall $\int_{{\mathbb S}^{n-1}}\Omega=0$), we have
$$|m_{k,j,l}(\xi)|\les_{N'} \min\{2^{-N'(m+l)},2^{-N'(k+l){\ind {k\ge -l}}}, 2^{-N'(m+l)+(k+l){\ind {k<-l}}}\}$$
for any sufficiently large $N'\in \N$. Then Plancherel's identity gives 
\beq\label{decay-q}
\|\h_{j,l}P_{k-j_*}f\|_{w_{2,2}(j\in S_{l,m})}\les_{N'} \min\{2^{-N'(m+l)},2^{-N'(k+l){\ind {k\ge -l}}}, 2^{-N'(m+l)+(k+l){\ind {k<-l}}}\}  \|f\|_{2}.
\eeq
In addition,  we can also obtain \eqref{decay-q} with $\frac{\partial}{\partial u}\h_{j,l}P_{k-j_*}f$ in place of $\h_{j,l}P_{k-j_*}f$ (since $N'$ is sufficiently  large). 
Then  \eqref{cha1} gives the desired result.  
%Next, we show the case $k< -l$.   By interpolation, it suffices to show 
%\beq\label{Go4098-1}
%\|\h_{j,l}P_{k-j_*}f\|_{w_{2,2}(j\in S_{l,m})}\les_{N'} 2^{-N'(m+l)}
%2^{c(k+l)} \|f\|_2
%\eeq
%for some $c>0$.  At this moment, we have 

 %$|m_{k,j,l}(\xi)|\les_{N'} 2^{-N'(m+l)}$
\end{proof}
 Next, we consider the case (ii) called the degenerate case, which  is more involved.  To treat this case, we will use the method of stationary phase.  Let    $\gamma$ and  $F$  denote two functions
\beq\label{derf2}
\gamma(s):=(\g')^{-1}(s)\quad{\rm and}\quad F(s):=-s\gamma(s)+(\g\circ \gamma)(s).
\eeq
Since $(\g''\circ\gamma)(s) \gamma'(s)=1$,  we deduce 
\beq\label{derf}
F'(s)=-\gamma(s),\quad F''(s)=-\frac{1}{(\g''\circ\gamma)(s)}.
\eeq
The stationary phase theorem (see, for example,  \cite[page 360]{St93}) allows us to rewrite the multiplier $m_{k,j,l}$, defined by (\ref{mui0}),  as
\beq\label{stat}
m_{k,j,l}(\xi)=
%\beta(2^{-(k-j_*)}\xi)
\psi(\frac{\xi}{2^{k-j_*}})
\Big(e^{-i2^juF(|\xi|/(2^ju))} \Xi\big(2^{l+j_*}\xi,2^j\g(2^{l+j_*})u\big)+ \kappa\big(2^{j_*+l}\xi,2^j\g(2^{l+j_*})u \big)\Big),
\eeq
where the functions $\Xi:\R^n\times \R\to \R$ and $\kappa:\R^n\times \R\to \R$ satisfy
\begin{align}
|\p^{\A_1}_\xi\p^{\A_2}_u\Xi(\xi,u)|&\les_{\A}\hspace{5.5pt} \ \ (1+|(\xi,u)|)^{-n/2-|\A|}, \label{x1}\\
|\p^{\A_1}_\xi\p^{\A_2}_u\kappa(\xi,u)|&\les_{N,\A}\   (1+|(\xi,u)|)^{-N-|\A|}\label{x2}
\end{align}
for any $N\in\N$ and any multi-index $\A=(\A_1,\A_2)\in\Z_{\ge 0}^n\times \Z_{\ge 0}$.
In particular,  it follows that
  \beq\label{decy}
 |m_{k,j,l}(\xi)|\les 2^{-n(m+l)/2}\sim_n 2^{-n(k+l)/2}.
 \eeq

 We end this subsection by introducing
an elementary inequality, which
  will be frequently used to bound some error terms or  terms with a rapid decay.
For  $d\in\N$ and $N\ge n+1$, we define the function  $\E_{d,N}$ on $\R^n$ by
 %$\E_{d,N}(x):=\frac{2^{dn}}{1+|2^d x|^N}$ satisfying  the following  basic inequality %with respect to $\E_{d,N}$:
 \beq\label{bei}
\E_{d,N}(x):=\frac{2^{dn}}{1+|2^d x|^N}, \quad {\rm which\ satisfies }\  \|\E_{d,N}*f\|_{q_1}\les 2^{dn(1/q_2-1/q_1)}\|f\|_{q_2}
 \eeq
for all $ 1\le q_2\le q_1\le \infty$.
\subsection{A square-function estimate}
Keep in mind that the space $W_{r,p}(j\in S_{l,m})$ is defined by (\ref{nn}).
Lemma \ref{6l} below establishes a square-function estimate
based on the assumption that the corresponding single annulus estimate holds.
\begin{lemma}\label{6l}
Let $n$, $l$, $m$ and $S_{l,m}$ be as in Proposition \ref{short2}. Let $p_n\in [2,\infty)$ be a real number. Suppose that the integer $k$ satisfies
$|k-m|\le \mathcal{C}$ with  $\mathcal{C}$  given   as  in Subsection \ref{sdef1}. Assume   that 
 for each $p\in[p_n,\infty]$,  there exists a   constant $\mathcal{B}_{p,n,m,l}\gtrsim 2^{-n(m+l)/2}$ %depending on $p,n,m,l$
 such that for any $j\in S_{l,m}$,
 \beq\label{x01}
\|\h_{j,l}P_{k-j_*}f\|_{L^p(L^2_u)}\les \mathcal{B}_{p,n,m,l}\|f\|_p.
\eeq
Then, for each $p\in (p_n,\infty)$, we have
\beq\label{xxxx10}
%\big\|\|\chi(u) H^{(2^j u)}_{l+j_*}P_{k-j_*}f\|_{_{B_{r,1}^{1/r}(\R)}}\big\|_{L^p(\ell^r_j(S_{l,m}))}
\|\h_{j,l}P_{k-j_*}f\|_{W_{2,p}(j\in S_{l,m})}\les 2^{(m+l)/2}~\big(\mathcal{B}_{p,n,m,l} \ +\tilde{\mathcal{B}}\big) \|f\|_p,
\eeq
where
$\tilde{\mathcal{B}}=\min\{2^{(m+l)(n/p_n-n/p)}~\mathcal{B}_{p_n,n,m,l},
(m+l)\mathcal{B}_{p_n,n,m,l}^{p_n/p}
\mathcal{B}_{\infty,n,m,l}^{1-p_n/p}\}$.
%where the implicit constant is independent of $m,l$.
 \end{lemma}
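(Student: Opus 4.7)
The plan is to apply the interpolation inequality (\ref{cha1}) with $r=2$ to bound the $B^{1/2}_{2,1}$-norm in $u$ by the sum of a pure $L^2_u$ piece and a geometric mean of $L^2_u$ and $\dot W^{1,2}_u$, take $L^p(\ell^2_j)$ and apply Cauchy--Schwarz, and then handle the resulting vector-valued quantities by analyzing $\partial_u \h_{j,l}$ directly and by lifting the single-annulus hypothesis (\ref{x01}) to an $\ell^2_j$-aggregated estimate. The factor $2^{(m+l)/2}$ in the conclusion will emerge from balancing in the geometric mean against the cost of one $\partial_u$.

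A direct differentiation shows that $\partial_u \h_{j,l}$ equals a harmless $\chi'$-term plus $\chi(u)$ times multiplication by $i 2^j \gamma(|t|)$ inside the defining integral. On the support $|t|\sim 2^{l+j_*}$ the scalar factor $2^j\gamma(|t|)$ has size $\sim 2^{m+l}$ by (\ref{p9}) together with the defining condition (\ref{slim}) of $S_{l,m}$, so $\partial_u \h_{j,l}$ splits as $2^{m+l}$ times an operator $\tilde{\h}_{j,l}$ of the same structural form as $\h_{j,l}$, for which the hypothesis (\ref{x01}) remains valid with the same constant $\mathcal{B}_{p,n,m,l}$. Combining (\ref{cha1}) with Cauchy--Schwarz in $\ell^2_j$ then reduces the lemma to the vector-valued single-annulus estimate
\[
\|\h_{j,l}P_{k-j_*}f\|_{L^p(L^2_u)(\ell^2_j)} \lesssim (\mathcal{B}_{p,n,m,l}+\tilde{\mathcal{B}})\|f\|_p,
\]
together with the same inequality for $\tilde{\h}_{j,l}$.

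The main obstacle is this vector-valued inequality, since the hypothesis (\ref{x01}) is scalar in $j$. The $\mathcal{B}_{p,n,m,l}$ piece captures the diagonal contribution: by Lemma \ref{l2.1}(ii), the sparsity of $j\mapsto j_*$ yields the Littlewood--Paley estimate $\|P_{k-j_*}f\|_{L^p(\ell^2_j)}\lesssim \|f\|_p$, and combining this with the scalar single-annulus bound at $p$ handles the diagonal terms. The $\tilde{\mathcal{B}}$ piece absorbs the off-diagonal cross terms through two complementary routes whose minimum is taken. The first uses the physical-side kernel decay (\ref{do1}) together with the Young-type estimate (\ref{bei}) and the $p_n$-single-annulus hypothesis to produce the spatial-localization loss $2^{(m+l)(n/p_n-n/p)}\mathcal{B}_{p_n,n,m,l}$, coming from the fact that $K_{k,j,l}$ is essentially supported in a ball of radius $2^{l+j_*}$. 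The second is a Stein-type complex interpolation between the $p_n$ and $\infty$ single-annulus hypotheses, where the logarithmic loss $(m+l)$ arises from summing over the dyadic scales that are relevant within the annular frequency support; this gives $(m+l)\mathcal{B}_{p_n,n,m,l}^{p_n/p}\mathcal{B}_{\infty,n,m,l}^{1-p_n/p}$.

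The genuinely delicate point is the off-diagonal contribution: passing from a scalar single-annulus bound to a vector-valued $\ell^2_j$ bound is not automatic and requires either tracking the interaction between the kernel's spatial scale $2^{l+j_*}$ and the $\ell^2_j$-sparsity of the frequency projections $P_{k-j_*}$, or executing a complex interpolation while keeping the dependence of $\mathcal{B}$ on $(p,n,m,l)$ explicit. Once this vector-valued step is in place, the remaining assembly through (\ref{cha1}), the $\partial_u$-computation of Step 2, and Hölder's inequality produces the stated bound with the prefactor $2^{(m+l)/2}$ and completes the proof.
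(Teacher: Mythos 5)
Your opening reduction is right and matches the paper: using (\ref{cha1}) with $r=2$, computing $\partial_u\h_{j,l}$, and noting that the extra factor $2^j\g(2^{l+j_*}|t|)\sim 2^{m+l}$ on the support (by (\ref{p9}) and (\ref{slim})) correctly reduces the lemma to the vector-valued bound $\|\h_{j,l}P_{k-j_*}f\|_{w_{2,p}(j\in S_{l,m})}\lesssim(\mathcal{B}_{p,n,m,l}+\tilde{\mathcal{B}})\|f\|_p$ and correctly accounts for the prefactor $2^{(m+l)/2}$. But the rest of the argument has a genuine gap: the passage from the scalar-in-$j$ hypothesis (\ref{x01}) to this $\ell^2_j$-aggregated estimate is exactly the content of the lemma, and you do not prove it. The "diagonal vs.\ off-diagonal" language is not meaningful here — for $p\neq 2$ the quantity $\|(\sum_j\|\h_{j,l}P_{k-j_*}f\|_{L^2_u}^2)^{1/2}\|_p$ does not decompose into diagonal and cross terms — and composing the scalar bound (\ref{x01}) with the Littlewood--Paley inequality $\|P_{k-j_*}f\|_{L^p(l^2_j)}\lesssim\|f\|_p$ does not yield a vector-valued operator bound (that composition is simply not a valid inference). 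Likewise, "Stein-type complex interpolation between the $p_n$ and $\infty$ single-annulus hypotheses" would at best interpolate scalar estimates; it cannot by itself produce the $\ell^2_j$ summation, so the source of the $(m+l)$ logarithmic factor and of the alternative $2^{(m+l)(n/p_n-n/p)}$ loss is asserted rather than derived.

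What the paper actually does at this point is a Fefferman--Stein/Seeger argument: it truncates to $\mathcal{N}_{l,m}=S_{l,m}\cap[-\mathcal{N},\mathcal{N}]$ (so the square function is a priori finite with constant $\mathcal{N}^{1/2}\mathcal{B}$), controls $\|G_{l,m}\|_p$ by $\|G_{l,m}^{\#}\|_p$, and splits, for each cube of side $\sim 2^L$, the indices $j$ according to whether $l+j_*\le L$ or $l+j_*>L$. The small-scale part is handled by interpolating $p=2$ (Plancherel plus (\ref{decy})) with $p=\infty$ (kernel decay (\ref{do1}) plus the sparsity estimate (\ref{sum1})). The large-scale part uses the sparsity of $j\mapsto j_*$ to reindex by $v=j_*+l-L$, then the fundamental theorem of calculus to gain $2^{k-v+l}$ when $v>k+l$ (giving the $\mathcal{B}_{p,n,m,l}$ term), and for $1\le v\le k+l$ a trivial bound interpolated between $p_n$ and $\infty$; summing over these $\sim m+l$ values of $v$ is precisely what produces both expressions in the minimum defining $\tilde{\mathcal{B}}$. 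None of this machinery appears in your proposal, so the central estimate remains unproved.
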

 %Here we will use the above  $\mathcal{B}_{p,n,m,l}\gtrsim 2^{-n(m+l)/2}$. 
The proof of Lemma \ref{6l}, which is applied in the proof of Lemma \ref{51} and 
is deferred to Section \ref{sargu}, relies on the Fefferman-Stein sharp maximal estimate and a modification of Seeger's arguments as presented in \cite{Se88}. While this procedure is standard, the presence of a generic radial phase function adds a slight complexity to the proof. Additionally, we permit the bound without the loss of $2^{(m+l)\e}$ since the endpoint $p=p_n$ is omitted. 
%By the way, we  make no distinction between the constants
%$\mathcal{B}_{p,n,m,l}$ and $\mathcal{B}_{p,n,k,l}$ whenever $|k-m|\les1$.

\subsection{A technique of localization}\label{localargument}
For the operator $\h_{j,l}P_{k-j_*}$,  utilizing a spatial space localization technique, we will deduce
 its $L^p(L^2_u)$-$L^p$ estimate   from its associated $L^p(L^2_u)$-$L^2$ estimate.
\begin{lemma}\label{local}
Let $n$, $l$, $m$, $S_{l,m}$ and $k$ be as in {Lemma} \ref{6l}.
Let
 $p\in [2,\infty]$ and   $j\in S_{l,m}$, and suppose that the  operator norm  $\|\h_{j,l}P_{k-j_*}\|_{L^2\to L^p(L^2_u)}$ is finite. Then the estimate
\beq\label{k01}
\|\h_{j,l}P_{k-j_*}\|_{L^p\to L^p(L^2_u)}\les_N 2^{(l+j_*)(n/2-n/p)}\|\h_{j,l}P_{k-j_*}\|_{L^2\to L^p(L^2_u)}+2^{-N(m+l)}
\eeq
holds for any $N\in\N$.
\end{lemma}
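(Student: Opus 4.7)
\smallskip

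\noindent\textbf{Proof plan for Lemma \ref{local}.}
The strategy is a standard spatial localization argument exploiting the fact that, up to a negligible error, the kernel $K_{k,j,l}(u,\cdot)$ is essentially supported in the ball of radius $\sim 2^{l+j_*}$. Once the operator is localized to cubes of side length $2^{l+j_*}$, the factor $2^{(l+j_*)(n/2-n/p)}$ arises from H\"older's inequality comparing $L^2$ and $L^p$ on a single cube (recall $p\ge 2$), and the assumed $L^2\to L^p(L^2_u)$ bound can then be applied piece by piece.

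Concretely, I would partition $\R^n$ into a grid of disjoint dyadic cubes $\{Q_\A\}_\A$ of side length $2^{l+j_*}$, and let $\widetilde{Q}_\A$ denote the concentric cube with side length $2^{l+j_*+12}$, so that for any $x\in Q_\A$ and any $y\notin \widetilde{Q}_\A$ one has $|x-y|\ge 2^{l+j_*+10}$. Splitting $f=f\,\mathbf{1}_{\widetilde{Q}_\A}+f\,\mathbf{1}_{\R^n\setminus\widetilde{Q}_\A}$ and using the convolution representation (\ref{k11}), write for $x\in Q_\A$
\begin{equation*}
\h_{j,l}P_{k-j_*}f(u,x)=\h_{j,l}P_{k-j_*}\bigl(f\,\mathbf{1}_{\widetilde{Q}_\A}\bigr)(u,x)+\mathcal{E}_\A(u,x),
\end{equation*}
where $\mathcal{E}_\A(u,x)=\chi(u)\int_{\R^n\setminus \widetilde{Q}_\A}K_{k,j,l}(u,x-y)f(y)\,dy$. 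Raising to the $p$-th power, integrating over $Q_\A$, and summing in $\A$ give
\begin{equation*}
\|\h_{j,l}P_{k-j_*}f\|_{L^p(L^2_u)}^p\les \sum_\A\bigl\|\h_{j,l}P_{k-j_*}(f\,\mathbf{1}_{\widetilde{Q}_\A})\bigr\|_{L^p(L^2_u)}^p+\sum_\A\int_{Q_\A}\|\mathcal{E}_\A(\cdot,x)\|_{L^2_u}^p\,dx.
\end{equation*}

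For the main term, applying the hypothesis $\|\h_{j,l}P_{k-j_*}\|_{L^2\to L^p(L^2_u)}\le A$ followed by H\"older's inequality on the cube $\widetilde{Q}_\A$ (legitimate since $p\ge 2$) yields
\begin{equation*}
\bigl\|\h_{j,l}P_{k-j_*}(f\,\mathbf{1}_{\widetilde{Q}_\A})\bigr\|_{L^p(L^2_u)}\le A\,\|f\,\mathbf{1}_{\widetilde{Q}_\A}\|_2\les A\,2^{(l+j_*)n(1/2-1/p)}\|f\,\mathbf{1}_{\widetilde{Q}_\A}\|_p.
\end{equation*}
Summing the $p$-th powers and using the bounded overlap of the $\widetilde{Q}_\A$ gives precisely the first term on the right-hand side of (\ref{k01}). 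For the error term, I use the kernel decay (\ref{do1}) valid on $|x-y|\ge 2^{l+j_*+10}$ to get $|\mathcal{E}_\A(u,x)|\les 2^{-N(k+l)}\bigl(\E_{k-j_*,N}*|f|\bigr)(x)$; since $|k-m|\le \mathcal{C}$ and $m+l\sim k+l$ on the regime under consideration, Minkowski in $u$ together with the $L^p$-boundedness (\ref{bei}) of convolution against $\E_{k-j_*,N}$ gives the error bound $\les 2^{-N(m+l)}\|f\|_p$ after adjusting $N$.

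No genuine obstacle is expected: the only points requiring care are (i) ensuring the enlargement factor for $\widetilde{Q}_\A$ is large enough for the decay estimate (\ref{do1}) to be applicable uniformly in $\A$, and (ii) keeping track of the relation $|k-m|\le \mathcal{C}$ so that the decay factor $2^{-N(k+l)}$ can be converted into the stated $2^{-N(m+l)}$. Both are routine, and the lemma follows.
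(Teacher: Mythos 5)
Your proposal is correct and follows essentially the same spatial localization argument as the paper: the paper decomposes $f=\sum_{\bf z}f\,\mathbf{1}_{Q_{\bf z}}$ over disjoint cubes of side $2^{l+j_*}$ and splits the output into near/far parts relative to an enlarged cube, whereas you decompose the output domain into such cubes and split the input into near/far parts, which is just the dual bookkeeping of the identical idea. Both arguments use the kernel decay (\ref{do1}) together with (\ref{bei}) for the far part and the $L^2\to L^p(L^2_u)$ hypothesis plus H\"older on a cube (giving the factor $2^{(l+j_*)(n/2-n/p)}$) for the near part, so the proof is complete as written.
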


\begin{proof}[Proof of Lemma \ref{local}]
%Let $\Xi_0$ denote the norm  of the operator $\h_{j,l}P_{k-j_*}$:  %$$\Xi_0:=\|\h_{j,l}P_{k-j_*}\|_{L^p\to L^p(L^2_u)}.$$
  For  ${\bf z}:=(z_1,\cdots,z_n)\in\Z^n$, denote by $Q_{\bf z}$   the cube $2^{l+j_*}\prod_{i=1}^n[z_i,z_i+1)$, and
denote  by $Q_{\bf z}^*$ the cube   centered at ${\bf z}$   with side-length $2^{l+j_*+11}$. By using  the notation   $f_{Q_{\bf z}}:=f1_{Q_{\bf z}}$ and recalling the expression (\ref{k11}),  we can write $\h_{j,l}P_{k-j_*}f(u,x)$ as
%Decompose  $f=\sum_{z}f_z$ with $f_z=f1_{Q_z}$, which gives
\beq\label{1te}
\begin{aligned}
\chi(u)\sum_{{\bf z}}(f_{Q_{\bf z}}*_xK_{k,j,l})(u,x)
&=\ \chi(u)\sum_{{\bf z}}(f_{Q_{\bf z}}*_x K_{k,j,l})(u,x)1_{Q_{\bf z}^*}(x)\\
&\ \ \ +\chi(u)\sum_{{\bf z}}(f_{Q_{\bf z}}*_x K_{k,j,l})(u,x)1_{(Q_{\bf z}^*)^c}(x)\\
&=:\ \mathcal{Y}_{1kjl}(u,x)+\mathcal{Y}_{2kjl}(u,x).
\end{aligned}
\eeq
Precisely, the choice of side lengths for $Q_{\bf z}$ and $Q_{\bf z}^*$ depends  on the estimate  (\ref{do1}).
To achieve (\ref{k01}), it  suffices to prove the inequality
\begin{align}
\|\mathcal{Y}_{2kjl}\|_{L^p(L^2_u)}
&\les_N\ 2^{-N(m+l)}\|f\|_p\label{aiq1}
\end{align}
for any $N\in\N$, and the estimate
\begin{align}
\|\mathcal{Y}_{1kjl}\|_{L^p(L^2_u)}
&\les\  \|\h_{j,l}P_{k-j_*}\|_{L^2\to L^p(L^2_u)}\ \|f\|_p.\label{aiq2}
\end{align}
Here the implicit constants in (\ref{aiq1}) and (\ref{aiq2}) are independent of $k,m,j,l$.
Using (\ref{do1}) and $|k-m|\le \mathcal{C}$, we immediately deduce  the pointwise inequality
\beq\label{b40}
|\mathcal{Y}_{2kjl}|(u,x)
\les_N 2^{-N(m+l)}\chi(u)|f|*\E_{k-j_*,N}(x).
\eeq
%with the implicit constant independent of $u$.
Then,
taking the $L^p(L^2_u)$ norm on both sides of (\ref{b40}), and using  the estimate (\ref{bei}),
we can achieve the desired   (\ref{aiq1}).
%\beq\label{2te}
%\|\sum_{z}|(f_{Q_z}*K_u)1_{(Q_z^*)^c}|\|_{w_{r,p}(S_{l,m})}
%\|\sum_{z}|(f_{Q_z}*K_u)1_{(Q_z^*)^c}|\|_{L^p(L^2_u)}
%\les 2^{-N(k+l)}\||f|*\E_{k-j_*,N}\|_p
%\les 2^{-N(k+l)}\|f\|_p.
Next, we prove (\ref{aiq2}).
Note the estimate
$$
\begin{aligned}
\|\mathcal{Y}_{1kjl}\|_{L^p(L^2_u)}
\les&\  \big(\sum_{{\bf z}}\|\h_{j,l}P_{k-j_*}f_{Q_{\bf z}}\|_{L^p(L^2_u)}^p\big)^{1/p}\\
\les&\ \|\h_{j,l}P_{k-j_*}\|_{L^2\to L^p(L^2_u)}~(\sum_{\bf z}\|f_{Q_{\bf z}}\|_2^p)^{1/p}.
\end{aligned}
$$
Then (\ref{aiq2}) follows
from Fubini's theorem and
$\|f_{Q_{\bf z}}\|_2\les 2^{(l+j_*)(n/2-n/p)}\|f_{Q_{\bf z}}\|_p$.
\end{proof}
\subsection{Case 1: small $p$}
\label{sdef}
We shall prove Proposition \ref{short2} by applying  the following lemma:
\begin{lemma}\label{400l}
Let $n,l,m,k$ and $S_{l,m}$ be as in Lemma \ref{6l}. Suppose
$1<p\le 2$ and $2\le r\le p'$.  Then
\beq\label{Go3}
\|\h_{j,l}P_{k-j_*}f\|_{w_{r,p}(j\in S_{l,m})}\les 2^{- n(m+l)/p'} \|f\|_p.
\eeq
\end{lemma}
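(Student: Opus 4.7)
The plan for Lemma \ref{400l} is to establish two endpoint bounds and interpolate between them. The first endpoint is $(p,r)=(2,2)$ with the sharp decay $2^{-n(m+l)/2}$, obtained via Plancherel and the stationary-phase estimate (\ref{decy}). The second endpoint is $p=1$ with arbitrary $r_0\in[2,\infty]$ and no decay, which comes for free from the a priori bound (\ref{nod}).

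For the $(2,2)$ endpoint, Fubini and Plancherel reduce the task to proving the pointwise inequality
\begin{equation*}
\sum_{j\in S_{l,m}} |m_{k,j,l}(\xi)|^2 \lesssim 2^{-n(m+l)}
\end{equation*}
uniformly in $u\in\mathrm{supp}\,\chi$ and $\xi\in\mathbb{R}^n$, where $m_{k,j,l}$ is the multiplier in (\ref{mui0}) (with implicit $u$-dependence through $A_{j,l}^u$). The stationary-phase decomposition (\ref{stat}) together with (\ref{decy}) gives $|m_{k,j,l}(\xi)| \lesssim 2^{-n(m+l)/2}$ uniformly in $u$ and $j$. The cutoff $\psi(\xi/2^{k-j_*})$ localizes $\xi$ to the annulus $|\xi|\sim 2^{k-j_*}$, and the sparseness of the map $\Phi$ (cf.\ Lemma \ref{l2.1}) guarantees that at most $O(1)$ indices $j\in S_{l,m}$ yield overlapping $\xi$-supports. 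Integrating against $|\chi(u)|^2$ and taking square roots then yields the $(2,2)$ endpoint with the advertised decay.

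For the $p=1$ endpoint, (\ref{nod}) directly delivers
\begin{equation*}
\|\h_{j,l}P_{k-j_*}f\|_{w_{r_0,1}(j\in S_{l,m})} \lesssim \|f\|_{\mathbb{H}^1},
\end{equation*}
uniformly in $k,l,m$, for any $r_0\in[2,\infty]$. Complex interpolation on the mixed-norm vector-valued spaces $L^p_x(l^r_j(L^r_u))$, combined with the standard identity $[\mathbb{H}^1,L^2]_\theta=L^p$, with $\theta=2/p'$, then produces the decay factor $2^{-n(m+l)\theta/2}=2^{-n(m+l)/p'}$. The induced $r$-exponent $1/r=(1-\theta)/r_0+\theta/2$ sweeps exactly the interval $[2,p']$ as $r_0$ ranges over $[2,\infty]$, which matches the range in the statement.

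The main obstacle is the finite-overlap count at the $L^2$ endpoint: the sparseness of $\Phi$ (and hence of $j\mapsto j_*$) plays the same role here as in Lemma \ref{l2.1}(ii), in that each $\xi$ can lie in the annulus $|\xi|\sim 2^{k-j_*}$ for only boundedly many values of $j_*$, and each $j_*$-level hosts only boundedly many $j$'s in $S_{l,m}$. A secondary technical point is that the multiplier depends on $u$, but the decay (\ref{decy}) is uniform in $u\in[1,2]$ and $\chi$ has compact support, so the $u$-integration costs only a harmless constant. Once the combinatorial bookkeeping at the $L^2$ endpoint is in place, the remainder is a routine invocation of complex interpolation for Hardy spaces and mixed-norm $L^p(l^r)$ spaces.
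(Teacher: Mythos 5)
Your proposal is correct and follows essentially the same route as the paper: interpolation between the $\mathbb{H}^1$ endpoint $\|\h_{j,l}P_{k-j_*}f\|_{w_{r_0,1}}\lesssim\|f\|_{\mathbb{H}^1}$ (from (\ref{nod})) and the $L^2$ endpoint with decay $2^{-n(m+l)/2}$ obtained via Fubini, Plancherel, the stationary-phase bound (\ref{decy}), and the sparseness of $\Phi$ to control the overlap of the annuli $|\xi|\sim 2^{k-j_*}$ (the paper routes this last step through (\ref{ineq1}) rather than a pointwise multiplier sum, but the mechanism is identical).
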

By using   (\ref{cha1}), we can  get from (\ref{Go3})  that if $n\ge 2$, $p\in (2n/(2n-1),2]$, $l,m,k$ and $S_{l,m}$ are given  as in Lemma \ref{6l},
\beq\label{big1}
\|\h_{j,l}P_{k-j_*}f\|_{W_{2,p}(j\in S_{l,m})}\les 2^{(m+l)(1/2-n/p')} \|f\|_p.
\eeq

\begin{proof}[Proof of Lemma \ref{400l}]
Interpolation shows that it suffices to prove the following estimates:
 \beq\label{71}
\|\h_{j,l}P_{k-j_*}f\|_{w_{r,1}(j\in S_{l,m})}\les  \|f\|_{\HH^1}\ \ (r=2,\infty),
\eeq
\beq\label{72}
%\big\|\|\chi(u) H^{(2^j u)}_{l+j_*}P_{k-j_*}f\|_{L^2_u}\big\|_{L^2(l^2_j(S_{l,m}))}
\|\h_{j,l}P_{k-j_*}f\|_{w_{2,2}(j\in S_{l,m})}\les 2^{- n(l+m)/2} \|f\|_2.
\eeq
Since (\ref{71}) is a direct result of
 (\ref{nod}), we reduce the matter to   proving  (\ref{72}).
By Fubini's theorem,
$$\|\h_{j,l}P_{k-j_*}f\|_{w_{2,2}(j\in S_{l,m})}=
\big\|\|\h_{j,l}P_{k-j_*}f(u,x)\|_{L^2_u(L^2_x)}\big\|_{\ell^2(j\in S_{l,m})}.
$$
Then,
using  Plancherel's identity and (\ref{decy}), we  deduce
 $$
\|\h_{j,l}P_{k-j_*}f\|_{w_{2,2}(j\in S_{l,m})}
\les 2^{-n(m+l)/2}\| P_{k-j_*}f\|_{L^2(\ell^2_j)},
$$
which yields (\ref{72}) by employing the inequality (\ref{ineq1}).
%(2) This estimate is similar. Using Fubini's theorem, Plancherel's identity and (\ref{decay}),
%we have
%$$
%%\big\|\|\chi(u) H^{(2^j u)}_{l+j_*}P_{k-j_*}f\|_{L^2_u}\big\|_{L^2(l^2_j(S_{l,m}))}
%\les_N 2^{-N|k-m|}2^{- N(l+m)} \|f\|_2
%$$
%holds for any $N\ge 1$.
%Interpolation gives the desired result.
\end{proof}

\begin{proof}[Proof of Proposition \ref{short2}]
Utilizing (\ref{cha1}) and 
  (\ref{Go3}),  we can infer that  the inequality
%from (\ref{Go3}) and (\ref{G04}) that then
\beq\label{ms1}%\big\|\|\chi(u) H^{(2^j u)}_{l+j_*}P_{k-j_*}f\|_{B_{r,1}^{1/r}(\R)}\big\|_{L^p(\ell^r_j(S_{l,m}))}
\|\h_{j,l}P_{k-j_*}f\|_{W_{r,p}(j\in S_{l,m})}\les 2^{ (l+m)(1/r-n/p')} \|f\|_p
\eeq
holds whenever  $1<p\le 2$, $2\le r\le p'$ and $
|k-m|\le {\mathcal{C}}$.  Here $\mathcal{C}$ is  given   as  in Subsection \ref{sdef1}, and the growth $2^{(l+m)/r}$ comes from the estimate of the  semi-norm $\|\cdot\|_{\dot{W}^{1,r}}^{1/r}$ on the right-hand side of (\ref{cha1}). 
On the other hand, using Lemma \ref{4l}, we deduce  
\beq\label{ms2}%\big\|\|\chi(u) H^{(2^j u)}_{l+j_*}P_{k-j_*}f\|_{B_{r,1}^{1/r}(\R)}\big\|_{L^p(\ell^r_j(S_{l,m}))}
\|\h_{j,l}P_{k-j_*}f\|_{W_{r,p}(j\in S_{l,m})}\les 2^{-(m+l)}
2^{-(k+l){\ind {k\ge -l}}+c_p(k+l){\ind {k<-l}}} \|f\|_p
%\|\h_{j,l}P_{k-j_*}f\|_{W_{r,p}(j\in S_{l,m})}\les 2^{-|k-m|}2^{ (l+m)(1/r-1)} \|f\|_p
\eeq
holds whenever $1<p<\infty$, $2\le r<\infty$ and $|k-m|>{\mathcal{C}}$.

Finally,  we   achieve
 (\ref{Go2})
 by employing
 the Littlewood-Paley decomposition
 \beq\label{litt}
 f=\sum_{|k-m|\le \mathcal{C}}P_{k-j_*}f+\sum_{|k-m|>\mathcal{C}}P_{k-j_*}f,
 \eeq
  Minkowski's inequality, (\ref{ms1}), (\ref{ms2})  and $n/p'<1$.
%\big\|\|\chi(u) H^{(2^j u)}_{l+j_*}f\|_{B_{r,1}^{1/r}(\R)}\big\|_{L^p(\ell^r_j(S_{l,m}))}\les 2^{ (l+m)(1/r-n/p')} \|f\|_p.
\end{proof}
%\vskip.1in
%It remains to prove Lemma \ref{4l}.
% \beq\label{decy}
 %|m_{k,j,l}(\xi)|\les 2^{-n(m+l)/2}.
 %\eeq
%\begin{eqnarray}\label{decay}
%%|\int e^{i2^{l+j_*} \xi\cdot t+i2^ju\g(2^{l+j_*}|t|)}\tilde{\psi}(t)dt|
%\les
%\left\{
%%\begin{matrix}
%    &2^{-n(m+l)/2},\ &{\rm if}\ |k-m|\le C_1, \\
 %   &2^{-N|k-m|}2^{-N(m+l)},\ &{\rm if}\ |k-m|> C_1.
%%\end{matrix}
%\right.
%\end{eqnarray}
\subsection{Case 2: large $p$}\label{sub6.7}
In this subsection, we  give the proof of  Proposition \ref{short1} by using  the following lemma. Let $\tilde{p}_n=2(n+1)/{(n-1)}$.
%In fact, we deduce from
%Lemma \ref{4l} that Proposition \ref{short1} is a direct result of
%Using Littlewood-Paley decomposition and  (\ref{Go4}) in  Lemma \ref{4l}, we reduce Proposition \ref{short1} to showing
\begin{lemma}\label{51}
 Let  $n,l,m,k$ and $S_{l,m}$ be as in Lemma \ref{6l}.
Suppose
 $ 2\le r< \infty$ and  $\tilde{p}_n< p< \infty$.  Then we have %there exists a positive constant $\e$ such that
\beq\label{x10}
%\big\|\|\chi(u) H^{(2^j u)}_{l+j_*}P_{k-j_*}f\|_{_{B_{r,1}^{1/r}(\R)}}\big\|_{L^p(\ell^r_j(S_{l,m}))}
\|\h_{j,l}P_{k-j_*}f\|_{W_{r,p}(j\in S_{l,m})}\les 2^{- n(m+l)/p} \|f\|_p.
\eeq
\end{lemma}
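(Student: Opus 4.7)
My plan is to combine an appropriate single-annulus $L^p(L^2_u)$-estimate (via Stein--Tomas restriction and Lemma \ref{local}) with the square-function estimate of Lemma \ref{6l}, then transfer the resulting $W_{2,p}$-bound to $W_{r,p}$ via a Besov embedding. Throughout I work with the Littlewood--Paley decomposition (\ref{lo1}), splitting the sum according to the critical range $|k-m|\le\mathcal{C}$ (where the phase $A_{j,l}^u(\cdot,\xi)$ admits a critical point) versus the non-critical range $|k-m|>\mathcal{C}$.

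For $|k-m|>\mathcal{C}$, Lemma \ref{4l} provides rapid decay $2^{-N(|k-m|+m+l)}\|f\|_p$ in the $w_{r,p}$-seminorm. Promoting this to the $W_{r,p}$-norm through (\ref{cha1}) costs at most one factor of $2^{m+l}$ (since $\p_u\h_{j,l}$ only brings down $2^j\g(|t|)\les 2^{m+l}$ on the kernel's support for $j\in S_{l,m}$), which is absorbed by choosing $N$ sufficiently large. Summation over $k$ yields a contribution of order $2^{-N(m+l)}\|f\|_p$, negligible compared with the target $2^{-n(m+l)/p}\|f\|_p$ for $N>n/p$.

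The critical range contains only $O(1)$ frequencies, so it suffices to bound a single $k\sim m$ and to establish, for $p>\tilde p_n$, the single-annulus estimate
\[
\|\h_{j,l}P_{k-j_*}f\|_{L^p_x(L^2_u)} \les 2^{-(m+l)(1/2+n/p)}\|f\|_p.
\]
Fed into Lemma \ref{6l} with $p_n=\tilde p_n$, $\mathcal{B}_{\tilde p_n,n,m,l}\les 2^{-(m+l)(1/2+n/\tilde p_n)}$ and the trivial $\mathcal{B}_{\infty,n,m,l}=O(1)$, the first term in $\tilde{\mathcal{B}}$ is exactly $2^{-(m+l)(1/2+n/p)}$, so the resulting $W_{2,p}$-bound is $\les 2^{(m+l)/2}\cdot 2^{-(m+l)(1/2+n/p)}\|f\|_p=2^{-n(m+l)/p}\|f\|_p$, and the one-dimensional Besov embedding $B^{1/2}_{2,1}(\R)\hookrightarrow B^{1/r}_{r,1}(\R)$ (valid for every $r\ge 2$, both spaces having common index $s-1/q=0$) transfers this directly to (\ref{x10}). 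To prove the displayed single-annulus estimate, I would first establish its $L^2\to L^p(L^2_u)$ counterpart using the stationary phase representation (\ref{stat}): the amplitude $\Xi$ obeys (\ref{x1}) and is of size $2^{-n(m+l)/2}$ in the critical range, while the phase $-2^j u F(|\xi|/(2^j u))$ has non-degenerate $\xi$-Hessian (equivalent to $F''=-1/(\g''\circ\gamma)\neq 0$, by (\ref{derf}) and (\ref{curv})), so the classical Stein--Tomas restriction theorem applies after rescaling to unit scale. Lemma \ref{local} then converts this $L^2\to L^p(L^2_u)$ bound into the stated $L^p\to L^p(L^2_u)$ bound, the scaling loss $2^{(l+j_*)(n/2-n/p)}$ in Lemma \ref{local} precisely cancelling the scaling factor introduced by Stein--Tomas at the surface radius $R=2^{l+j_*}$.

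The hardest part will be the uniform Stein--Tomas step: the classical theorem is stated for a single hypersurface of non-vanishing Gaussian curvature, whereas here the phase $-\tau F(|\xi|/\tau)$ and amplitude $\Xi$ depend on $(j,l,m)$. Because $\g$ is not assumed homogeneous, uniformity over $j\in S_{l,m}$, $l\ge 1$, and $m\in\mathcal{P}$ is not automatic; I would verify it via the quantitative $\U$-bounds (\ref{p8})--(\ref{p81}) together with Lemma \ref{lpro} for $\gamma=(\g')^{-1}$, which guarantee that after the substitution $\xi=2^{k-j_*}\eta$ and $\tau=2^j u$ the resulting phase and amplitude lie in a bounded family of symbols satisfying the standard restriction-theorem hypotheses (uniformly smooth with uniformly non-vanishing Hessian). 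Once this uniform estimate is in hand, the remainder of the argument is essentially arithmetic bookkeeping of scaling factors.
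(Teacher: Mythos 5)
Your proposal follows essentially the same route as the paper: the single-annulus bound $\|\h_{j,l}P_{k-j_*}f\|_{L^p(L^2_u)}\les 2^{-(m+l)(1/2+n/p)}\|f\|_p$ (the paper's Lemma \ref{60}, proved via the localization Lemma \ref{local} plus an $L^2\to L^p(L^2_u)$ estimate) is fed into Lemma \ref{6l} with $p_n=\tilde{p}_n$, and general $r\ge 2$ is reduced to $r=2$ via $B^{1/2}_{2,1}\hookrightarrow B^{1/r}_{r,1}$ and $l^2_j\hookrightarrow l^r_j$; your exponent bookkeeping is correct, and your discussion of $|k-m|>\mathcal{C}$ is superfluous since the lemma already assumes $|k-m|\le\mathcal{C}$. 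The only under-specified step is the $L^2\to L^p(L^2_u)$ bound, which is not merely ``classical Stein--Tomas after rescaling'': in the paper it combines spherical Stein--Tomas in the angular variable with a $TT^*$ almost-orthogonality argument in $u$ in the radial variable, whose key input is the lower bound on $\frac{d}{dr}Q_{u_1,u_2}$ obtained from (\ref{curv}) with $j=2$ together with the claim (\ref{claim1}) --- exactly the quantitative $\U$-estimates and Lemma \ref{lpro} you cite, so the gap is one of detail rather than of approach.
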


\begin{remark}\label{amz}
For $n\ge 2$, we deduce by
interpolating (\ref{big1}) with (\ref{x10})  that there is an $\e>0$ such that  for every $p\in (2n/(2n-1),\infty)$,
$
\|\h_{j,l}P_{k-j_*}f\|_{W_{r,p}(j\in S_{l,m})}\les 2^{- (m+l)\e} \|f\|_p
$
with $l,m,k$ and $S_{l,m}$    as in Lemma \ref{6l}.
This together with (\ref{ms2}) and  Lemma \ref{l1} gives, for every $p\in (2n/(2n-1),\infty)$,
that  the short variation-norm estimate  $\big\|V_2^{\rm sh}({\bf H}f)\big\|_p\les \|f\|_p$ holds. On the other hand,
 the desired estimate for the long jump inequality was obtained in Section \ref{long} (see  (\ref{ka})).
 Thus, for  $n\ge 2$ and  every $p\in (2n/(2n-1),\infty)$, we can get  the jump inequality (\ref{jump100}).
% $$ \sup_{\lambda>0}\|\lambda \sqrt {N_\la \big(H^{(u)} f: u\in\R \big)}\|_p\les \|f\|_p.$$
\end{remark}
We first prove Proposition \ref{short1} under the assumption that Lemma \ref{51} holds.

\begin{proof}[Proof of Proposition \ref{short1}]
The proof is analogous to that of  Proposition \ref{short2}.
Using (\ref{litt}) and Minkowski's inequality, we   see  that  Proposition \ref{short1} is a direct result of
Lemma \ref{51} and (\ref{ms2}).
\end{proof}
It  remains  to show Lemma \ref{51}.
Instead of directly using the point-wise estimate (\ref{decy}), we shall use the asymptotic expansion (\ref{stat}) to provide a more detailed analysis. In what follows, for any function $h$ on $\R\times \R^n$, we use
$$
\F_{(2)}{h}(u,\xi) :=\int e^{-ix\cdot \xi}h(u,x)dx.
$$
Let $F$ denote the function defined by (\ref{derf2}),
and let   $G_a^\circ$, $G_a$ and $G_\kappa^\circ$  be three operators defined by
\beq\label{nota}
\begin{aligned}
\F_{(2)}{G_a^\circ f}(u,\xi):=&\ e^{-i2^juF(2^{-j}u^{-1}|\xi|)}\F_{(2)}{G_a f}(u,\xi),\\
\F_{(2)}{G_a f}(u,\xi):=&\ \chi(u) \Xi\big(2^{l+j_*}\xi,u2^j\g(2^{l+j_*})\big)\widehat{f}(\xi),\\
 \F_{(2)}{G_\kappa^\circ f}(u,\xi):=&\ \chi(u)  \kappa\big(2^{l+j_*}\xi,u2^j\g(2^{l+j_*})\big)\widehat{f}(\xi),
 \end{aligned}
 \eeq
 where the functions $\Xi$ and $\kappa$ satisfy  (\ref{x1}) and (\ref{x2}).
 Then we can rewrite  $\h_{j,l} P_{k-j_*}f$ as
 \beq\label{fen90}
 \h_{j,l}P_{k-j_*}f=G_a^\circ P_{k-j_*}f+G_\kappa^\circ P_{k-j_*}f.
 \eeq
This reduces the matter
 to proving the associated desired estimates for $G_a^\circ P_{k-j_*}f$ and $G_\kappa^\circ P_{k-j_*}f$.
  \begin{lemma}\label{60}
  Let  $n,l,m,k$ and $S_{l,m}$  be as in Lemma \ref{51}.
  Suppose $\tilde{p}_n\le p\le \infty$.
 Then
 (\ref{x01}) with $\mathcal{B}_{p,n,m,l}=2^{-(m+l)(1/2+n/p)}$ holds.
 \end{lemma}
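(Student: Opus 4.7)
The plan is to begin from the stationary-phase expansion provided by (6.15), writing $\h_{j,l}P_{k-j_*}f = G_a^\circ P_{k-j_*}f + G_\kappa^\circ P_{k-j_*}f$ and treating each piece separately. The $G_\kappa^\circ$-piece will be a fast-decaying error, while $G_a^\circ$ carries the essential oscillation and will be handled by Stein--Tomas restriction together with the spatial localization in Lemma \ref{local}.

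For $G_\kappa^\circ P_{k-j_*}f$, note that on the support of $\psi(\xi/2^{k-j_*})$ with $|k-m|\le\mathcal{C}$ we have $2^{l+j_*}|\xi|\sim 2^{m+l}$, so (\ref{x2}) yields $|\p^\A\kappa(2^{l+j_*}\xi,u2^j\g(2^{l+j_*}))|\lesssim_{N,\A} 2^{-N(m+l)}$ with uniform control on all derivatives. For each fixed $u\in\mathrm{supp}\,\chi$ this is a Mikhlin-type multiplier and gives $\|G_\kappa^\circ P_{k-j_*}f\|_{L^p}\les 2^{-N(m+l)}\|f\|_p$; since $u$ ranges in a bounded interval, the $L^2_u$ integration is harmless and we get $\|G_\kappa^\circ P_{k-j_*}f\|_{L^p(L^2_u)} \les 2^{-N(m+l)}\|f\|_p$, which is much stronger than required.

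For $G_a^\circ P_{k-j_*}f$, I would first apply Lemma \ref{local} to reduce the $L^p \to L^p(L^2_u)$ bound to an $L^2 \to L^p(L^2_u)$ bound, modulo the prefactor $2^{(l+j_*)(n/2-n/p)}$ and a harmless rapid-decay error. Thus it suffices to show
$$\|G_a^\circ P_{k-j_*}f\|_{L^p(L^2_u)} \les 2^{-(l+j_*)(n/2-n/p)-(m+l)(1/2+n/p)}\|f\|_2, \quad \tilde p_n\le p\le\infty.$$
At the Stein--Tomas endpoint $p=\tilde p_n$ I would view $G_a^\circ P_{k-j_*}$ as an adjoint-restriction operator attached to the hypersurface $\{(\eta,F(|\eta|)):|\eta|\sim 1\}$: after rescaling $\xi = 2^{k-j_*}\eta$ and introducing a dimensionless time coordinate for $u$, the operator $G_a^\circ P_{k-j_*}$ factors (up to the amplitude factor $\lesssim 2^{-n(m+l)/2}$ supplied by $\Xi$) through a standard Fourier extension on this unit-scale hypersurface. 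The surface has non-vanishing Gaussian curvature because $F''(s)=-1/(\g''\circ\gamma)(s)\neq 0$ by (\ref{derf}) together with (\ref{curv}), and the radial structure yields the full $(n-1)$ principal curvatures needed. Stein--Tomas then delivers the extension bound at $p=\tilde p_n$, and undoing the rescaling while retaining the amplitude factor produces the claimed bound at this endpoint. The opposite endpoint $p=\infty$ is obtained via Cauchy--Schwarz in $\xi$ together with the amplitude decay in the $u$-variable of $\Xi$ (which, by (\ref{x1}), is $L^2_u$-integrable with gain $2^{-(m+l)/2}$ because the second argument $u2^j\g(2^{l+j_*})\sim 2^{m+l}$ provides extra decay once one integrates in $u$). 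Real interpolation between these two endpoints then covers the entire range $p\in[\tilde p_n,\infty]$.

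The main technical obstacle is the Stein--Tomas step: one must implement the rescaling so that the hypersurface $\{(\eta,F(|\eta|))\}$ appears at scale $1$ with uniformly non-degenerate curvature (quantitatively controlled by $\g$ and $\gamma$), and simultaneously verify that the amplitude $\Xi(2^{l+j_*}\xi,u2^j\g(2^{l+j_*}))$ remains a well-behaved symbol on this unit-scale surface so that it does not spoil the restriction bound. Because $\g$ is generic rather than homogeneous, the rescaling does not produce a clean self-similar picture, and the non-trivial dependence of $F$ on the inverse function $\gamma = (\g')^{-1}$ requires careful bookkeeping using the estimates for $\gamma$ in Lemma \ref{lpro}; this is precisely the point at which the stated use of Stein--Tomas restriction intervenes and where the novelty of the argument lies.
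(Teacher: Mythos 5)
Your overall architecture (split off $G_\kappa^\circ$ as a rapidly decaying error, use Lemma \ref{local} to reduce to an $L^2\to L^p(L^2_u)$ bound, then interpolate between $p=\tilde p_n$ and $p=\infty$) matches the paper, and your treatment of $G_\kappa^\circ$ is fine. But the two endpoint estimates, which are the substance of the lemma, contain genuine gaps. At $p=\infty$ your argument is Cauchy--Schwarz in $\xi$ plus a claimed ``extra decay from integrating $\Xi$ in $u$''. There is no such decay: for $u$ in the unit-length support of $\chi$ the second argument $u2^j\g(2^{l+j_*})$ only varies over an interval of length comparable to its size, so $\Xi(2^{l+j_*}\xi,u2^j\g(2^{l+j_*}))$ is essentially constant of size $2^{-n(m+l)/2}$ in $u$, and Cauchy--Schwarz yields only $2^{n(k-j_*)/2-n(m+l)/2}\|f\|_2$. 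The target (\ref{step11}) at $p=\infty$ is smaller than this by the factor $2^{-(k+l)/2}$ (equivalently the $2^{-(l+j_*)/2}$ in (\ref{g91})); that gain can only come from the oscillation $e^{-i2^juF(2^{-j}u^{-1}|\xi|)}$ in the $u$-variable, which your $p=\infty$ argument never uses. In the paper this gain is extracted by a $TT^*$ argument in $u$: the derivative of the phase difference $Q_{u_1,u_2}$ is bounded below by $2^{k+l}|u_1-u_2|$ via (\ref{derf}), (\ref{curv}) and the claim (\ref{claim1}) (which is where Lemma \ref{lpro} and the definition of $S_{l,m}$ enter), and integration by parts in $r$ then gives the kernel $(1+2^{2(k+l)}|u_1-u_2|^2)^{-1}$ whose $L^1_u$ norm supplies exactly the missing factor. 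Without this, interpolation cannot reach the stated $\mathcal{B}_{p,n,m,l}$.

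At $p=\tilde p_n$ your plan also diverges from what the exponent forces. You propose adjoint restriction to the space-time hypersurface $\{(\eta,F(|\eta|))\}\subset\R^{n+1}$; but the phase actually present is $x\cdot\xi-2^juF(2^{-j}u^{-1}|\xi|)$, in which $u$ does not multiply a fixed function of $\xi$ (only for homogeneous $\g$ does $uF(u^{-1}r)$ separate), so there is no fixed hypersurface and one would need a variable-coefficient (H\"ormander-type) oscillatory integral estimate --- precisely the machinery the paper is structured to avoid. Moreover the natural exponent for an $n$-dimensional surface in $\R^{n+1}$ is $2(n+2)/n$, not $\tilde p_n=2(n+1)/(n-1)$; the latter is the Stein--Tomas exponent for $\mathbb{S}^{n-1}\subset\R^n$, which signals the paper's actual route: dualize to (\ref{se1}), pass to polar coordinates in $\xi$, apply Stein--Tomas restriction to the unit sphere in the angular variable only, and then run the $TT^*$ argument in $u$ described above to handle the radial/temporal oscillation and produce (\ref{g91}). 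You correctly identified the rescaling/curvature bookkeeping as the main obstacle, but the proposal leaves it unresolved, and the $p=\infty$ endpoint as written is false; both must be repaired before the interpolation closes.
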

By accepting this lemma, we now prove   Lemma \ref{51}
 \begin{proof}[Proof of Lemma \ref{51}]
 It suffices to show the case $r=2$.
Clearly,  Lemma \ref{51} with $r=2$ follows from    Lemma \ref{60} and Lemma \ref{6l} with $n\ge 2$ and $p_n=\tilde{p}_n$.
 \end{proof}
 It  remains to prove Lemma \ref{60}.
\begin{proof}[Proof of Lemma \ref{60}]
 %reduce the matter to proving
%\beq\label{dire}
%\|G_\mathfrak{a}^\circ P_{k-j_*}f\|_{W_{r,p}(S_{l,m})}\les 2^{- l\e} \|f\|_p,\ \mathfrak{a}\in\{a,e\}
%\eeq
%with $r,p,l$ given as in Lemma \ref{51}.
 %Using Plancherel's identity, (\ref{x1}) and (\ref{x2}), we first deduce
%$$\|G_a^\circ P_{k-j_*}\|_{2\to 2}\les 2^{-n(m+l)/2},\ \ \|G_e^\circ P_{k-j_*}\|_{2\to 2}\les 2^{-N(m+l)}.$$
%Moreover, we shall focus on the estimate of  $G_a$ since that of $G_e$ can be treated similarly.
%{\bf Proof of (\ref{dire})}
%In the following context, we only focus on the estimate
 %of  $G_a$ since that of $G_e$ can be treated similarly. On other word,
 %we shall show
 %\beq\label{dire1}
%\|G_a^\circ P_{k-j_*}f\|_{W_{r,p}(S_{l,m})}\les 2^{- l\e} \|f\|_p
%\eeq
%for $r,p,l$ given as in Lemma \ref{51}.
%\vskip.2in
Using  Lemma \ref{local}  we are able to reduce the matter to showing  %(\frac{n+1}{2}-\frac{n}{p'})
$$
\| \h_{j,l}P_{k-j_*} f\|_{L^p(L^2_u)}\les
2^{-(k-j_*)(\frac{n+1}{2}-
\frac{n}{p'})-\frac{l+j_*}{2}-\frac{n(m+l)}{2}}\|f\|_2,
\ \ \tilde{p}_n\le p\le \infty.
$$
By (\ref{fen90}), it is  sufficient to show that for each
$p\in [\tilde{p}_n,\infty]$,
\beq\label{step11}
\|G_\mathfrak{a}^\circ P_{k-j_*}f\|_{L^p(L^2_u)}\les
2^{-(k-j_*)(\frac{n+1}{2}-\frac{n}{p'})-\frac{l+j_*}{2}-\frac{n(m+l)}{2}} \|f\|_2,\ \mathfrak{a}\in\{a,\kappa\}.
\eeq
\vskip.1in
{\bf Proof of  (\ref{step11}) for $\mathfrak{a}=a$.} Let $\tilde{\chi}$  denote  the smooth function with compact support in $[2^{-3},2^3]$,
 and set $\tilde{\chi}=1$ on supp$\chi$.
By duality,
it suffices to show that  for all $ \mathcal{G}(u,x)\in L^{p'}(L^2_u)$,
\beq\label{se1}
\begin{aligned}
\|\psi(\frac{\xi}{2^{k-j_*}}) \int  e^{-i2^juF(2^{-j}u^{-1}|\xi|)} \tilde{\chi}(u)  \F_{(2)}{\mathcal{G}}(u,\xi)
 du \|_{L^2_\xi}
 \les
2^{-(k-j_*)(\frac{n+1}{2}-\frac{n}{p'})-\frac{l+j_*}{2}} \|\mathcal{G}\|_{L^{p'}(L^2_u)}.
 \end{aligned}
\eeq
Actually, using (\ref{nota}) we  write
$G_aP_{k-j_*}g=\tilde{K}_{j,k,l}*_xg$
with the kernel $\tilde{K}_{j,k,l}(u,x)$ described by
$$
\begin{aligned}
\tilde{K}_{j,k,l}(u,x)=&\ \chi(u)\int  e^{i\xi\cdot x} \psi(\frac{\xi}{2^{k-j_*}})~\Xi\big(2^{l+j_*}\xi,u2^j\g(2^{l+j_*})\big)~d\xi\\
=&\ 2^{n(k-j_*)}\chi(u)\int  e^{i2^{k-j_*}\xi\cdot x} \psi(\xi)~ \Xi\big(2^{l+k}\xi,u2^j\g(2^{l+j_*})\big)~d\xi.
\end{aligned}$$
By integration by parts  and (\ref{x1}), we have
$$|\tilde{K}_{j,k,l}(u,x)|+|\frac{\p \tilde{K}_{j,k,l}}{\p u}(u,x)|\les 2^{-n(m+l)/2}|(\chi,\chi')|(u) ~\E_{k-j_*,N}(x)
\quad {\rm for\ any}\ N\in\N,$$
which yields
from Sobolev's inequality that
\beq\label{f1}
\|\tilde{K}_{j,k,l}\|_{L^{1}(L^\infty_u)}\les \|\tilde{K}_{j,k,l}\|_{L^{1}(L^{1}_u)}+\|\frac{\p \tilde{K}_{j,k,l}}{\p u}\|_{L^{1}(L^{1}_u)}
\les 2^{-n(m+l)/2}.
\eeq
By
H\"older's inequality and Young's inequality,
we deduce from  (\ref{f1})  that
\beq\label{f2}
\|G_a  P_{k-j_*}g\|_{L^{p'}(L^2_u)}\les 2^{-n(m+l)/2} \|g\|_{L^{p'}(L^2_u)}.
\eeq
%by H\"older's inequality and Young's inequality.
%Using Sobolev inequality and (\ref{x1}) we can infer
Then,   we finish the proof of   (\ref{step11})   with $\mathfrak{a}=a$  by  using (\ref{se1}) with $\mathcal{G}=G_a  P_{k-j_*}g$ and  (\ref{f2}).
%since using (\ref{x1}) and Sobolev inequality we can obtain
%$$\|G_a g\|_{L^{p'}(L^2_u)}\les 2^{-n(m+l)/2}\|g\|_{L^{p'}(L^2_u)}.$$
\vskip.1in
It  remains to prove
(\ref{se1}).
Let  $h:\R\times \R^n\to \R$ denote the function given by
$$
\F_{(2)}{h}(r,\eta):=\psi(\frac{r}{2^{k-j_*}})\int  e^{-i2^j u F(2^{-j}u^{-1}r)} \tilde{\chi}(u)    \F_{(2)}{\mathcal{G}}(u,r\eta)
 du.
 $$
Using polar coordinates we  rewrite   the square of  the left-hand side
 of (\ref{se1}) as
\beq\label{6.392}
\int_0^\infty \int_{\mathbb{S}^{n-1}}
|\F_{(2)}{h}(r,\theta)|^2d\sigma  r^{n-1} dr.
\eeq
By the Stein-Tomas restriction theorem
$$ \int_{\mathbb{S}^{n-1}}|\widehat{f}(\theta)|^2d\sigma
\les \|f\|_{q'}^2\ {\rm\ whenever}\   \tilde{p}_n\le q\le \infty,$$
 the change of variable $x\to rx$, and Minkowski's inequality,
$$ (\ref{6.392}) \les\int_0^\infty \|h(r,\cdot)\|_{p'}^2 r^{n-1}dr
\les 2^{-(k-j_*)(n+1-2n/p')}
 \|\Theta\|_{p'}^2,
$$
where $\Theta(x):=(\int_0^\infty |r^n h(r,rx)|^2dr)^{1/2}$.
%$$\Theta(x):=|\big(\int_0^\infty| \int_u \beta(2^{-(k-j_*)}r)\chi(u) e^{-i2^j u F(2^{-j}u^{-1}r)}  G_a g(u,x) |^2dr\big)^{1/2}.$$
Hence,  it is sufficient for  (\ref{se1})   to show
\beq\label{g91}
\Theta^2(x)
\les\ 2^{-(l+j_*)} \|\mathcal{G}(\cdot,x)\|_2^2.
\eeq
Note that
$r^n h(r,rx)=\psi(\frac{r}{2^{k-j_*}})\int  e^{-i2^j u F(2^{-j}u^{-1}r)} \tilde{\chi}(u)   \mathcal{G}(u,x)
 du.$
Using the $TT^*$ argument, we rewrite $\Theta^2(x)$ as
\beq\label{suq}
\begin{aligned}
\Theta^2(x)
=&\ \int_0^\infty
\int_{u_1,u_2}|\psi(2^{-(k-j_*)}r)|^2
e^{iQ_{u_1,u_2}(r)} \tilde{\chi}(u_1)\tilde{\chi}(u_2)   \mathcal{G}(u_1,x)\overline{\mathcal{G}(u_2,x)}
du_1du_2 dr\\
=&\ 2^{k-j_*}\int_0^\infty
\int_{u_1,u_2}|\psi(r)|^2
e^{iQ_{u_1,u_2}(2^{k-j_*}r)} \tilde{\chi}(u_1)\tilde{\chi}(u_2) \mathcal{G}(u,x)\overline{\mathcal{G}(u,x)}
du_1du_2 dr.
\end{aligned}
\eeq
where the function $Q_{u_1,u_2}$ is given by
\beq\label{pha1}
Q_{u_1,u_2}(r):=2^ju_1F(2^{-j}u_1^{-1}r)
-2^ju_2F(2^{-j}u_2^{-1}r).
\eeq
It follows from  (\ref{derf2}) and (\ref{derf}) that
$
Q_{u_1,u_2}'(r)=\gamma(2^{-j}u_2^{-1}r)-\gamma(2^{-j}u_1^{-1}r),
$
which, according to the mean value theorem, leads to
$$
\begin{aligned}
    \frac{d}{dr}(Q_{u_1,u_2}(2^{k-j_*}r))
%=&\ 2^{k-j_*}Q_{u_1,u_2}'(2^{k-j_*}r)\\
=&\ 2^{k-j_*} \big(\gamma(2^{k-j-j_*} u_2^{-1}r)-\gamma( 2^{k-j-j_*} u_1^{-1}r)\big)\\
=&\ 2^{k-j_*} \frac{2^{k-j-j_*} r(u_1^{-1}-u_2^{-1})}{(\g''\circ\gamma)
(2^{k-j-j_*} u_\circ r)}
\end{aligned}
$$
for some $u_\circ\in (u_1^{-1},u_2^{-1})$. We  {\bf claim}
\beq\label{claim1}
|\gamma
(2^{k-j_*-j} u_\circ r)|\sim  |\gamma
(2^{k-j_*-j})| \sim 2^{l+j_*}
\eeq
whenever $r\sim1$, and proceed with the proof of (\ref{g91}).
From  (\ref{claim1}),  (\ref{curv}) with $j=2$, and $u_1,u_2\sim1$ we deduce
\beq\label{aq1}
\begin{aligned}
   &\ \ |\frac{d}{dr}(Q_{u_1,u_2}(2^{k-j_*}r)) |\\
   =&\ 2^{k-j_*}  |\gamma
(2^{k-j_*-j} u_\circ r)||
   \frac{
2^{k-j_*-j} u_\circ r}{(\g''\circ\gamma)
(2^{k-j_*-j} u_\circ r) \gamma
(2^{k-j_*-j} u_\circ r)}||
   \frac{u_1^{-1}-u_2^{-1}}{u_\circ  }|\\
   \gtrsim&\  2^{k-j_*} 2^{l+j_*} |u_1-u_2|.
\end{aligned}
\eeq
%Thanks to this claim, we have
%$$|\frac{d}{dr}(Q_{u_1,u_2}(2^{k-j_*}r)) |
%\gtrsim 2^{k-j_*} 2^{l+j_*} |u_1-u_2|.
%$$
Then, integration by parts gives
$$\int_0^\infty |\psi(r)|^2 e^{iQ_{u_1,u_2}(2^{k-j_*}r)} dr
\les \frac{1}{1+2^{2(k+l)}|u_1-u_2|^2}.$$
Inserting this into (\ref{suq}), we have
$$
\begin{aligned}
\Theta^2(x)
\les&\ 2^{k-j_*}
\int_{u_1,u_2}
|\tilde{\chi}(u_1)\tilde{\chi}(u_2) | \frac{|\mathcal{G}(u_1,x){\mathcal{G}(u_2,x)}|}{1+2^{2(k+l)}|u_1-u_2|^2}
du_1du_2,
\end{aligned}
$$
which yields
from   H\"older's inequality that
\beq\label{c2}
 \begin{aligned}
  \Theta^2(x)
\les\ 2^{k-j_*} \|\mathcal{G}(\cdot,x)\|_2\big\| 
   \int |\mathcal{G}(u-u_1,x)|\frac{1}{1+2^{2(k+l)}|u_1|^2}d{u_1}\big\|_{L^2_u}.
  % \les 2^{-(j_*+l)}\|\mathcal{G}(\cdot,x)\|_{2}^2.
 \end{aligned}
\eeq
 At last, we get (\ref{g91})  by applying  Young's inequality to (\ref{c2}).

To finish the proof of  (\ref{step11}) with $\mathfrak{a}=a$, it remains to show the {\bf claim}  (\ref{claim1}).
%We end this section by proving the above claim (\ref{claim1}).
 Note that the first estimate is a direct result of Lemma \ref{lpro}.  For the second estimate, we need to exploit some useful information   from $j\in S_{l,m}$ defined  by (\ref{slim}). More precisely, we see that
$$2^k\sim 2^m\sim \frac{1}{2^{l}}\frac{\g(2^{l+j_*})}{\g(2^{j_*})}\quad {\rm whenever}\ j\in S_{l,m}.$$
By Lemma \ref{lpro} again, we deduce
\beq\label{r1}
|\gamma
(2^{k-j_*-j})|\sim |\gamma
\Big(\frac{1}{2^{l+j_*+j}}\frac{\g(2^{l+j_*})}{\g(2^{j_*})}\Big)|.
\eeq
Besides,  applying  (\ref{curv}) and (\ref{smooth}) with $j=1$, and (\ref{p9}), we deduce
\beq\label{r2}
\frac{\g(2^{l+j_*})}{2^{l+j_*+j}\g(2^{j_*})}
=\frac{\g(2^{l+j_*})}{2^{l+j_*}\g'(2^{l+j_*})}
\frac{\g'(2^{l+j_*})}{2^j\g(2^{j_*})}
\sim \g'(2^{l+j_*}).
\eeq
Finally,   the claim (\ref{claim1}) follows
 from  (\ref{r1}), (\ref{r2}),   (\ref{curv}) and (\ref{smooth}) with $j=1$, and (\ref{p9}).

{\bf Proof of  (\ref{step11}) for $\mathfrak{a}=\kappa$.}  We see from (\ref{x1}) and (\ref{x2}) that the function $\kappa$ decays faster  than the function $\Xi$,  therefore  the associated  proof of  (\ref{step11}) for $\mathfrak{a}=\kappa$  should be simpler. Write $G_\kappa^\circ P_{k-j_*} f:=f*_x \bar{K}_{j,k,l}$,
where the kernel  $\bar{K}_{j,k,l}(u,x)$ is given by
$$
\begin{aligned}
\bar{K}_{j,k,l}(u,x)=\ 2^{n(k-j_*)}\chi(u)\int  e^{i2^{k-j_*}\xi\cdot x} \psi(\xi) \kappa\big(2^{l+k}\xi,u2^j\g(2^{l+j_*})\big)d\xi.
\end{aligned}$$
Then, by following  the  arguments yielding the  estimate  for $\tilde{K}_{j,k,l}(u,x)$, we have
 $|\bar{K}_{j,k,l}(u,x)|\les_N 2^{-N(m+l)}\chi(u)\E_{k-j_*,N}(x)$ for any $N\in\N$,
and then
 $\|\bar{K}_{j,k,l}\|_{L^{q}(L^2_u)}\les_N 2^{-N(m+l)}2^{n(k-j_*)/q'}$ for any $q\ge 1$ and any $N\ge n+1$. This yields
 by Minkowski's inequality, Sobolev's inequality and (\ref{bei}) that
 $$\|G_\kappa^\circ P_{k-j_*} f\|_{L^p(L^2_u)}
 \les \|\bar{K}_{j,k,l}\|_{L^{2p/(p+2)}(L^2_u)}\|f\|_2
 \les_N 2^{-N(m+l)} 2^{(k-j_*)(n/p'-n/2)}\|f\|_2$$
 for any $N\ge n+1$. Taking $N$ large enough, we can end the proof of  (\ref{step11}) for $\mathfrak{a}=\kappa$.
\end{proof}

\begin{remark}\label{r1d}
For the one-dimensional case, it is necessary to establish the following  local smooth estimate: There are two  constants $\e>0$ and $p_1\ge 4$ such that
for every $p\in (p_1,\infty)$,
$$
\|G_a^\circ P_{k-j_*}f\|_{W_{p,p}(j\in S_{l,m})}
\les 2^{-(m+l)\e}\|f\|_p,
$$
where $l$, $m$, $k$ and $S_{l,m}$  are as in Lemma \ref{51}.
For specific homogeneous phase functions, this estimate can be achieved  through Tao's bilinear estimate or decoupling theory.
However, achieving it without adding additional conditions is challenging due to the general nature of the phase function.
\end{remark}
\section{Proof of Lemma \ref{6l}}
\label{sargu}
In this section, we give the proof of  Lemma \ref{6l}.
%Let $\mathcal{B}_{p,n,m,l}$ be as in Lemma Lemma \ref{6l}, and denote $\mathcal{B}_{p}:=\mathcal{B}_{p,n,m,l}$ in order to enhance the clarity of the following proof.
\begin{proof}[Proof of Lemma \ref{6l}]
%[Proof of Lemma \ref{61}]
%Denote $\mathcal{B}_p:=\mathcal{B}(m,l,p,n)$.
 By  (\ref{cha1}),
it suffices to show that for each $p\in (p_n,\infty)$,
\beq\label{x11}
\|\h_{j,l}P_{k-j_*}f\|_{w_{2,p}(j\in \mathcal{N}_{l,m})}\les
\big(\mathcal{B}_{p,n,m,l}+\tilde{\mathcal{B}}\big)\|f\|_p
\eeq
and
\beq\label{x11-20}
\|(\h_{j,l}P_{k-j_*}f)'\|_{w_{2,p}(j\in \mathcal{N}_{l,m})}\les 2^{m+l}
\big(\mathcal{B}_{p,n,m,l}+\tilde{\mathcal{B}}\big)\|f\|_p
\eeq
uniformly in $\mathcal{N}\in\N$,
where  $\mathcal{N}_{l,m}:=S_{l,m}\cap [-\mathcal{N},\mathcal{N}]$ and $(\h_{j,l}P_{k-j_*}f)'$ denotes $\frac{\partial}{\partial u}\h_{j,l}P_{k-j_*}f(u,x)$.  We only give the proof of \eqref{x11} since \eqref{x11-20} can be treated similarly.

For each $(l,m)$, we denote by $G_{l,m}$
the square function
\beq\label{fun1}
G_{l,m}(x):=(\sum_{j\in \mathcal{N}_{l,m}}\|\h_{j,l}P_{k-j_*}f(\cdot,x)\|_{2}^2)^{1/2}.
\eeq
Then, we rewrite  (\ref{x11}) as
\beq\label{x12}
\|G_{l,m}\|_p\les
\big(\mathcal{B}_{p,n,m,l}+\tilde{\mathcal{B}}\big)\|f\|_p.
\eeq
From (\ref{x01}) we first obtain that for all $p_n\le q\le \infty$,
\beq\label{zx1}
\|G_{l,m}\|_{L^q}\les \mathcal{N}^{1/2}\mathcal{B}_{q,n,m,l}\|f\|_q.
\eeq
To accomplish the goal (\ref{x12}), we need to remove the dependence of $\mathcal{N}$
on the right-hand side of (\ref{zx1}).  

For any function $h\in L^1_{loc}$, we
denote by $h^\#$ the Fefferman-Stein sharp maximal function of $h$, which can be  given by
$$
h^\#(x):=\sup_{ Q\ni x} \frac{1}{|Q|}\int_{Q}\left| h(y)-\frac{1}{|Q|}\int_Q h(w)dw\right|dy,$$
where the supremum is over all cubes $Q$ containing $x$.
   Since  (\ref{zx1}), we have  $\|G_{l,m}\|_q\les_q \|G^\#_{l,m}\|_q$ for all $q\in(p_n,\infty)$ (see \cite{FS72} for the details). Then,
 it is sufficient for (\ref{x12}) to show
\beq\label{x13}
\|G^\#_{l,m}\|_p\les \big(\mathcal{B}_{p,n,m,l}+\tilde{\mathcal{B}}\big)\|f\|_p,\quad
p\in (p_n,\infty).
\eeq
By triangle inequality, we have
$$
\begin{aligned}
 G^\#_{l,m}(x)\les&\  \sup_{Q\ni x}
\frac{1}{|Q|^{2}}\int_Q\int_Q
\Big(\sum_{j\in \mathcal{N}_{l,m}}\big\|\h_{j,l}P_{k-j_*}f(\cdot,w)
-\h_{j,l}P_{k-j_*}f(\cdot,y)\big\|_{2}^2\Big)^{1/2}dwdy.
\end{aligned}
$$
For every integer $L$, we
 denote by $\mathcal{Q}_L(x)$ the family of  cubes containing $x$ with  side-length in $(2^{L-1},2^L]$, and denote  two subsets of $\mathcal{N}_{l,m}$  by $\mathcal{N}_{ilm}^L$
$(i=1,2)$, which are given respectively by
$$
\begin{aligned}
\mathcal{N}_{1lm}^L:=\ \mathcal{N}_{l,m}\cap \{j\in\Z:\ l+j_* \le  L\}\quad{\rm and}\quad
\mathcal{N}_{2lm}^L:=\ \mathcal{N}_{l,m}\cap \{j\in\Z:\ l+j_*> L\}.
\end{aligned}$$
Define two maximal functions $A_{s,l,m}f(x)$ and $A_{b,l,m}f(x)$ by
$$
\begin{aligned}
A_{s,l,m}f(x):=&\ \sup_{L\in\Z}\sup_{Q\in \mathcal{Q}_L(x)}
\frac{1}{|Q|}\int_Q
\big(\sum_{j\in \mathcal{N}_{1lm}^L}\|\h_{j,l}P_{k-j_*}f(\cdot,y)\|_{2}^2~\big)^{1/2}dy,\\
A_{b,l,m}f(x):=&\ \sup_{L\in\Z}\sup_{Q\in \mathcal{Q}_L(x)}
\frac{1}{|Q|^{2}}\int_Q\int_Q\sum_{j\in \mathcal{N}_{2lm}^L}
\|\h_{j,l}P_{k-j_*}f(\cdot,w)-\h_{j,l}P_{k-j_*}f(\cdot,y)\|_{2}dwdy.
\end{aligned}
$$
Hence, $G^\#_{l,m}(x)\les  A_{s,l,m}f(x)+A_{b,l,m}f(x)$, and  it is sufficient for (\ref{x13}) to show
\begin{align}
\|A_{s,l,m}f\|_p\les&\ 2^{-n(m+l)/2}\ \|f\|_p\  \ \ \ \ \ {\rm for\ \ } 2\le p\le \infty,\label{fen1}\\
\|A_{b,l,m}f\|_p\les& \ (\mathcal{B}_{p,n,m,l}+\tilde{\mathcal{B}})\ \|f\|_p\ \ {\rm for\ \ }p_n< p< \infty.\label{fen2}
\end{align}

{\bf Proof of (\ref{fen1}).} By interpolation, it suffices to  prove (\ref{fen1}) for two endpoints  $p=2$ and $p=\infty$.
 Since $A_{s,l,m}f(x)\les MG_{l,m}(x)$, where  $G_{l,m}$ is given by (\ref{fun1}), we have
\beq\label{wp1}
\|A_{s,l,m}f\|_2
\les \|MG_{l,m}\|_2
\les (\sum_{j\in\Z}\|\h_{j,l}P_{k-j_*}f\|_{L^2_u(L^2)}^2)^{1/2}.
\eeq
From Plancherel's identity and (\ref{decy}) we can see
\beq\label{a00}
\|\h_{j,l}P_{k-j_*}f\|_{L^2_u(L^2)}
\les 2^{-n(m+l)/2}\|P_{k-j_*}f\|_2.
\eeq
Then, plugging (\ref{a00}) into (\ref{wp1}), and using (\ref{ineq1}), we
obtain  (\ref{fen1}) for the case $p=2$.
Next, we consider the case $p=\infty$.
Note that  there exist an integer $L$ and a cube $Q_x$ containing $x$ with side-length in $(2^{L-1},2^L]$ such that
$$A_{s,l,m}f(x)\les\  \frac{1}{|Q_x|}\int_{Q_x}
(\sum_{j\in \mathcal{N}_{1lm}^L}\|\h_{j,l}P_{k-j_*}f(\cdot,y)\|_{2}^2)^{1/2}dy,$$
which yields by triangle inequality and H\"{o}lder's inequality that
\beq\label{112}
\begin{aligned}
A_{s,l,m}f(x)
\les&\ \ \Big(\frac{1}{|Q_x|}\int_{Q_x} \sum_{j\in \mathcal{N}_{1lm}^L}
\|\h_{j,l}P_{k-j_*}(f1_{B_x^L})(\cdot,y)\|_{2}^2dy\Big)^{1/2}\\
&+\frac{1}{|Q_x|}\int_{Q_x} \sum_{j\in \mathcal{N}_{1lm}^L}
\|\h_{j,l}P_{k-j_*}(f1_{(B_x^L)^c})(\cdot,y)\|_{2}dy\\
=:&\ R_{1lm}^Lf(x)+R_{2lm}^Lf(x),
%\les&\  A_{s1}f(x)+A_{s2}f(x),
\end{aligned}
\eeq
where %$f_{L}(x):=f(x)1_{B_x^L}$ and $_{L,c}(x):=f(x)1_{(B_x^L)^c}$ in which 
$B_x^L$ is a ball containing $x$ with side-length $2^{L+20}$. The decomposition for $f$
with respect to  $B_x^L$ is motivated by (\ref{do1}).
%where
%$$
%\begin{aligned}
 %   A_{1s}f(x):=&\ (|Q_x|^{-1}\int_{Q_x} \sum_{j\in \mathcal{N}_1}
%\|\mathcal{H}(f1_{B_x^L})(u,y)\|_{L^2_u}^2dy)^{1/2}\\
%&+|Q_x|^{-1}\int_{Q_x} \sum_{j\in \mathcal{N}_1}
%\|\mathcal{H}(f1_{(B_x^L)^c})(u,y)\|_{L^2_u}dy\\
%=:&\ R_1f(x)+R_2f(x).
%\end{aligned}
%$$
 Using (\ref{a00}), we have
\beq\label{113}
\begin{aligned}
R_{1lm}^Lf(x)
\les&\  2^{-Ln/2}(\sum_{j\in\Z}\|\h_{j,l}P_{k-j_*}(f1_{B_x^L})\|_{L^2_u(L^2)}^2)^{1/2}\\
\les&\  2^{-Ln/2}2^{-n(m+l)/2}\|f1_{B_x^L}\|_2\\
\les&\  2^{-n(m+l)/2}\|f\|_\infty.
\end{aligned}\eeq
Thanks to (\ref{do1}), we can infer
$$
\begin{aligned}
R_{2lm}^L f(x)
\les_N&\  |Q_x|^{-1}
\int_{Q_x}\sum_{j\in \mathcal{N}_{1lm}^L}
\int_{\R^n\setminus B_x^L}\mathcal{E}_{k-j_*,N}(y-w)|f(w)|dwdy\\
\les_N&\ \sum_{j\in \mathcal{N}_{1lm}^L}\int_{|y|\ge 2^L}\mathcal{E}_{k-j_*,N}(y) dy~\|f\|_\infty.
\end{aligned}
$$
%where $\mathcal{E}_{k-j_*,N}$ is given by (\ref{do1}).
We may obtain  from   (\ref{sum1})  with $\phi(j)=j_*$
that for any $\beta>0$,
\beq\label{us1}
\sum_{j:\ j_*\le L-l}
2^{\beta j_*}
%\le \sum_{j:\ \mathcal{U}_j \le L-l+1}
%2^{\beta\mathcal{U}_j}
\les_\beta
 2^{\beta(L-l)}.
 \eeq
Using (\ref{us1}), we further deduce by
setting $N>3n/2$ that
\beq\label{114}
\begin{aligned}
R_{2lm}^L f(x)
\les&\ \sum_{j\in \mathcal{N}_{1lm}^L} 2^{-(L+k-j_*)(N-n)}\|f\|_\infty\\
\les&\ 2^{-k(N-n)}2^{-L(N-n)}\sum_{j:j_*\le L-l}
2^{j_*(N-n)}\|f\|_\infty\\
\les&\ 2^{-(k+l)(N-n)}\|f\|_\infty.
\end{aligned}
\eeq
At last, (\ref{fen1}) for the case $p=\infty$ can be achieved by
plugging (\ref{113}) and (\ref{114}) into (\ref{112}).
%Choosing $N$ large enough, it follows the desired result. Interpolation gives that for $p>2n/(n-1)$,
%$$\|A_sf\|_p\les 2^{-(m+l)(n/p+1/2)}\|f\|_p.$$

{\bf Proof of (\ref{fen2}).} Let $n_0$ be a big positive integer such that $(j_1)_*\neq (j_2)_*$ whenever $|j_1-j_2|\ge n_0$.  Here $(j_{\rm i})_*$ ($\rm i=1,2$) is defined by
(\ref{brie}) with $\bar{k}=j_{\rm i}$.
Now,
we decompose $\mathcal{N}_{2lm}^L$ into $n_0$ sets $S_1$, $S_2, \cdots, S_{n_0}$, which are defined by  $$S_i:=\big(n_0\Z+i\big)\cap  \mathcal{N}_{2lm}^L,\quad i=1,\cdots,n_0.$$
Then, to prove (\ref{fen2}), it suffices to show that  for all $1\le i\le n_0$
\beq\label{fen3}
\|A_{b,l,m}^if\|_p\les \big(\mathcal{B}_{p,n,m,l}+\tilde{\mathcal{B}}\big)\|f\|_p,
\eeq
 where the function $A_{b,l,m}^if$ is given by
$$
A_{b,l,m}^if(x):=\ \sup_{L\in\Z}\sup_{Q\in \mathcal{Q}_L(x)}
\frac{1}{|Q|^{2}}\int_Q\int_Q\sum_{j\in S_i}
\|\h_{j,l}P_{k-j_*}f(\cdot,w)-\h_{j,l}P_{k-j_*}f(\cdot,y)\|_{2}dwdy.
$$
We only show \eqref{fen3} for the case $i=1$ since other cases $i=2,\ldots,n_0$ can be treated similarly. 

At this moment,  the function $v(j):=j_*+l-L$ is  an injection on $S_1$, which shows
that  $j=h(v)$,  the inverse function  of $v(j)$,
 is well-defined on the range of $v$  denoted by $\tilde{S}_1$. Changing the variable $j\to h(v)$
and
 splitting the set  $\tilde{S}_1$ into two subsets  $\tilde{S}_1'$ and $\tilde{S}_1''$, which are defined by
 $$\tilde{S}_1':=\tilde{S}_1\cap \{v\in\Z:\ v>k+l\},
 \quad\tilde{S}_1'':=\tilde{S}_1\cap \{v\in\Z:\ 1\le v\le k+l\},$$
we  can infer  the inequality
\beq\label{fen10}
A_{b,l,m}^1 f(x)
%\le
%\sum_{v\ge 1} A_{bv}^if(x)
\le \sum_{v\in \tilde{S}_1'}A_{blmv}^1 f(x)
+\sum_{v\in \tilde{S}_1''}A_{blmv}^1 f(x)=:\mathcal{A}_{1lm}f(x)+\mathcal{A}_{2lm}f(x),
\eeq
where the operator $A_{blmv}^1$ is given  by
\beq\label{fd1}
A_{blmv}^1f(x):=
\sup_{L\in\Z}\sup_{Q\in \mathcal{Q}_L(x)}
\frac{1}{|Q|^{2}}\int_Q\int_Q\|\h_{v,l,k,L}f(\cdot,y)-\h_{v,l,k,L}f(\cdot,w)  \|_{2}dwdy
\eeq
in which
\beq\label{forma}
\h_{v,l,k,L}f(u,x):=\h_{h(v),l}P_{k-(v+L-l)}f(u,x).
\eeq
Thanks to (\ref{fen10}),  we reduce the matter   to showing 
\begin{align}
\|\mathcal{A}_{1lm}f\|_p\les&\  \mathcal{B}_{p,n,m,l}\|f\|_p,\quad p_n\le p\le \infty,\label{fen4}\\
\|\mathcal{A}_{2lm}f\|_p\les &\ \tilde{\mathcal{B}}\  \|f\|_p,\quad
\ \ \ \ \ \ \ p_n< p< \infty. \label{fen5}
\end{align}

First we prove (\ref{fen4}).
According to the fundamental theorem of calculus, we estimate the $L^2_u$ norm of  $\h_{v,l,k,L}f(u,y)$ minus $\h_{v,l,k,L}f(u,w)$  by the sum of  $O(n)$ terms like
\beq\label{aa1}
\begin{aligned} &\ 2^{k-(L-l+v)}|y-w|\int_0^1\|(\h_{h(v),l}\tilde{P}_{k-(v+L-l)}f)(\cdot,y+s(w-y))\|_2ds\\
\les&\ 2^{k-v+l}\int_0^1\|(\h_{h(v),l}\tilde{P}_{k-(v+L-l)}f)(\cdot,y+s(w-y))\|_2ds
\end{aligned}
\eeq
where  $|y-w|\les 2^L$ was applied. %(since $y,w\in Q\in Q_L(x)$),
Write
$\tilde{\h}_{v,l,k,L}(u,x):=\h_{h(v),l}\tilde{P}_{k-(v+L-l)}(u,x).$
Thus, %using $|\tilde{P}_{k-(v+L-l)} g|\les Mg$,
  we can bound $|A_{blmv}^1 f|$ by  the sum of  $O(n)$ terms, such as
 $2^{k-v+l} \sup_{L\in\Z} M(\|\tilde{\h}_{v,l,k,L} f\|_{L^2_u})$.
 Now, in order to show (\ref{fen4}), by $\ell^q\subset \ell^\infty$ for any $q<\infty$ and Fubini's theorem, it suffices to prove that for  $p_n\le p\le \infty$,
\beq\label{fen12}
\big(\sum_{L}\| \tilde{\h}_{v,l,k,L} f\|_{L^p(L^2_u)}^p\big)^{1/p}
\les \mathcal{B}_{p,n,m,l}\|f\|_p\quad {\rm whenever}\ v\in \tilde{S}_1'.
\eeq
Indeed,
changing the variable $j\to h(v)$ in (\ref{x01}) gives
\beq\label{variant}
\| \tilde{\h}_{v,l,k,L} f\|_{L^p(L^2_u)}\les \mathcal{B}_{{p,n,m,l}}\|\tilde{P}_{k-(v+L-l)}f \|_p,\ \ \ p_n\le p\le \infty,
\eeq
which immediately yields  (\ref{fen12})  by Lemma \ref{l2.1}.
%$$\frac{1}{|Q|^{2}}\int_Q\int_Q\|\h_{v,l,k,L}f(\cdot,y)-\h_{v,l,k,L}f(\cdot,w)  \|_{2}dwdy
%\les 2^{k-v+l}\sup_L M(\|\tilde{\h}_{v,l,k,L} f\|_{L^2_u}).$$
%As a consequence,
%where $M^3f(x):=M(M(Mf))(x)$. Thus,
%$$|Q|^{-2}\int_Q\int_Q\|\chi(u)H_{v+L}^{(2^{h(v)}u)}P_{k-(v+L-l)}[f(y)-f(w)]  \|_{L^2_u}dwdy
%\les 2^{k-v+l}M_{HL}(M_{HL}\tilde{P}_{k-(v+L-l)}f)$$
%\beq\label{end1}
%\|\mathcal{A}_1f\|_p
%\les  \sum_{v\in \tilde{S}_i'}2^{k-v+l} \| \sup_L M(\|\tilde{\h}_{v,l,k,L} f\|_{L^2_u})\|_p.
%\eeq
\begin{remark}\label{r100}
In estimating  $\|\h_{v,l,k,L}f(\cdot,y)-\h_{v,l,k,L}f(\cdot,w)\|_2$, (\ref{aa1}) is inefficient if $v\le k+l$. In fact,  in cases of $v\le k+l$,  we can use a trivial inequality  $\|\h_{v,l,k,L}f(\cdot,y)-\h_{v,l,k,L}f(\cdot,w)\|_2
\le \|\h_{v,l,k,L}f(\cdot,y)\|_2+\|\h_{v,l,k,L}f(\cdot,w)\|_2$ instead. Consequently, we have
\beq\label{aa2}
|A_{blmv}^1f|\les\sup_{L\in\Z} M(\|\h_{v,l,k,L} f\|_{L^2_u}).
\eeq
%$\|\h_{v,l,k,L}f(\cdot,y)-\h_{v,l,k,L}f(\cdot,w)\|_2$
\end{remark}

To achieve the desired (\ref{fen5}), it suffices to show
\begin{eqnarray}
\|A_{blmv}^1f\|_{p_n}
\les& \mathcal{B}_{{p_n,n,m,l}}&\|f\|_{p_n}, \label{aa3}\\
\|A_{blmv}^1f\|_{\infty}
\les& \min\{2^{nv/p_n}\mathcal{B}_{p_n,n,m,l},\ \mathcal{B}_{\infty,n,m,l}\}&\|f\|_{\infty}. \label{aa49}
\end{eqnarray}
Indeed, interpolating (\ref{aa3}) and (\ref{aa49}) gives
 $$
\|A_{blmv}^1f\|_p
\les \min\{2^{v(n/p_n-n/p)}\mathcal{B}_{p_n,n,m,l},\ \mathcal{B}_{p_n,n,m,l}^{p_n/p}~
\mathcal{B}_{\infty,n,m,l}^{1-p_n/p}\}\|f\|_p,
$$
which yields
$$\|\mathcal{A}_{2lm}f\|_p\les \min\Big\{\sum_{1\le v\le k+l} 2^{v(n/p_n-n/p)}
\mathcal{B}_{p_n,n,m,l},(k+l)\mathcal{B}_{p_n,n,m,l}^{p_n/p}~
\mathcal{B}_{\infty,n,m,l}^{1-p_n/p}\Big\}\|f\|_p.$$
Hence, using  $p>p_n$, we can achieve
(\ref{fen5}).
%Next, we prove (\ref{aa3})
 % and (\ref{aa4}) in order.
 
We first prove (\ref{aa3}).
Using (\ref{aa2})  we obtain
$
\|A_{blmv}^1 f\|_{p_n}^{p_n}
\les
\sum_{L\in\Z}\| {\h}_{v,l,k,L} f\|_{L^{p_n}(L^2_u)}^{p_n}.
%\les&\ 2^{-(k+l)(n/p_0+1/2)}\|f\|_{p_0},
$
Then (\ref{aa3}) follows from (\ref{variant}) with $p=p_n$ and Lemma \ref{l2.1}.
Next, we show (\ref{aa49}). %Recall the definition of  $B_x^L$
Since
$
\|A_{blmv}^1f\|_{\infty}
\les\mathcal{B}_{\infty,n,m,l}\|f\|_{\infty}
$
derived from (\ref{variant}),  it suffices to show
\beq\label{aa4}
\|A_{blmv}^1 f\|_{\infty}
\les 2^{nv/p_n}\mathcal{B}_{p_n,n,m,l}\|f\|_{\infty}.
\eeq
We now bound the function $A_{blmv}^1 f(x)$.  
Decomposing
$f=f1_{B_x^{L+v+20}}+f1_{(B_x^{L+v+20})^c}=:F_{1Lv,x}+F_{2Lv,x}$,
where $B_x^{L}$   is defined as the statements  below (\ref{112}), we shall bound $A_{blmv}^iF_{dLv,x}$ ($d=1,2$).  This decomposition of $f$ is based on  (\ref{do1}).
By H\"older's inequality and (\ref{fen12}), the inequality
$$
\begin{aligned}
\frac{1}{|Q|}\int_Q
\|{\h}_{v,l,k,L} F_{{1Lv,x}}\|_{L^2_u}dy
\les&\
\big(\frac{1}{|Q|}\int_Q
\|{\h}_{v,l,k,L} F_{{1Lv,x}}\|_{L^2_u}^{p_n}dy\big)^{1/p_n}
\les\  2^{-Ln/p_n}\mathcal{B}_{p_n,n,m,l}
\|F_{{1Lv,x}}\|_{p_n}
\end{aligned}$$
holds
for all $Q\in \mathcal{Q}_L(x)$. This with  H\"older's inequality $\|F_{{1Lv,x}}\|_{p_n}\les |B_x^{L+v}|^{1/p_n}\|f\|_\infty$ implies
\beq\label{b1}
A_{blmv}^1 F_{1Lv,x}(x)
\les 2^{nv/p_n}\mathcal{B}_{p_n,n,m,l}\|f\|_\infty.
\eeq
In addition, since $1\le v\le k+l$ and $\mathcal{B}_{p,n,m,l}\gtrsim 2^{-n(m+l)/2}$,
if the inequality
\beq\label{b2}
A_{blmv}^1 F_{2Lv,x}(x)\les_N   2^{-N(m+l)}\|f\|_\infty,
\eeq
holds for any ${N}\in \N$,  we will  obtain  (\ref{aa4}) by combining (\ref{b1}) and  (\ref{b2}). Thus, it remains to show
(\ref{b2}).
Invoking  (\ref{forma}), for any $g\in L^\infty$, we can write
$\h_{v,l,k,L}g(u,x)=\chi(u)(g*_x\mathcal{K}_{klvL})(u,x)$, where
$$\mathcal{K}_{klvL}(u,x):=2^{n(k+l-v-L)}\int_{\R^{n+1}}\psi(\xi)
e^{i2^{k+l-v-L}\xi\cdot(x-2^{v+L}t)+i2^{h(v)}u\g(2^{v+L}t)}\tilde{\psi}(t)dtd\xi.$$
Integrating by parts in the variable $\xi$ shows $|\mathcal{K}_{klvL}(u,x)|\les_N 2^{-N(m+l)}\E_{k+l-v-L,N}(x)$ whenever $|x|\ge 2^{L+v+10}$. Then, for any
$u\sim 1$ and $y\in Q\in \mathcal{Q}_L(x)$, we deduce
$|\h_{v,l,k,L}F_{2x}(u,y)|\les_N 2^{-N(m+l)}\|f\|_\infty,$
 which, with the definition (\ref{fd1}) of the operator $A_{blmv}^1$,  yields
(\ref{b2}) by Young's inequality.
\end{proof}
%%%%
%%%
\section{Sharpness of Theorem \ref{endpoint}}
\label{sharp}
In this section, we show that $r\ge p'/n$ is necessary.\footnote{The arguments also work for the case $n=1$.}
%We now commence the proof of sharpness.
Without loss of generality, from (\ref{curv}) and (\ref{smooth}) we may assume $\g''>0$ and   there are two constants $C_1\in (0,1/10)$ and
$C_2\in(1,\infty)$ such that
\beq\label{po1}C_1\le \frac{s\g''(s)}{\g'(s)}\le C_2.
\eeq
Recall the process in Subsection \ref{Fursub1}.  It follows from (\ref{po1}) that  for all  $l\in\Z$ and  $s\in (0,\infty)$,
\beq\label{po2}2^{C_1s}\le \frac{\g'(2^{l+s})}{\g'(2^l)} \le 2^{C_2s}.
\eeq
In addition,  we can also assume $\g(1)=1$ in what follows.
Let $k$ represent a sufficiently large integer, and define
\beq\label{para}
2^{\tilde{k}}:=\g'(2^k),\quad \quad \quad \widehat{f_k}(\xi):=\psi(2^{-\tilde{k}}\xi)
\eeq
(here, $\psi$ is defined as in {\bf Notation}). Moreover, consider two fixed constants $\e_0$ and $\lambda$ that satisfy
\beq\label{choos}
0<C_2\e_0 \le 1/4,\quad \quad \quad \quad
C_1\lambda\ge 4.
\eeq
By the uncertain principle, (\ref{para}), and (\ref{curv})-(\ref{smooth}) with $j=1$,
 we obtain
$$\|f_k\|_p\sim 2^{\tilde{k}n/p'}\sim \big(2^{-k}\g(2^k)\big)^{n/p'}.$$
This implies that it suffices to show
\beq\label{go1}
\big\|V_r\big(H^{(u)}f_k:u\in[1,2]\big)\big\|_{L^p(I_{k,\e_0})}
\gtrsim \g(2^k)^{1/r}2^{-kn/p'},
\eeq
where the constant $\e_0$ is given by (\ref{choos}), and the set $I_{k,\e_0}$ is defined by
$$I_{k,\e_0}=\{x\in \R^n:\ 2^{k-2\e_0}\le |x|\le 2^{k-\e_0}\}.$$
Indeed, by taking $f=f_k$ in (\ref{var}) and setting $k\to \infty$ (so that $\g(2^k)\to \infty$),
  we immediately  deduce  $1/r\le n/p'$.

Next, we show (\ref{go1}). Let  $\Psi:\R\to [0,1]$ be a smooth function with compact support in $[2^{-\lambda},2^{\lambda}]$, where $\lambda$ is given by (\ref{choos}),
and set $\Psi$ such that $\sum_{j\in\Z}\Psi_j(|t|)=1$ for all $t\in \R^n\setminus\{0\}$, where $\Psi_j(\cdot):=\Psi(2^{-j}\cdot)$.
By  this partition of unity,  we can write $H^{(u)}f$ as
$$H^{(u)}f(x)=\sum_{j\in\Z}\int e^{i\xi\cdot x}
\psi(\frac{\xi}{2^{\tilde{k}}})~ \Xi_{k,j}^u(\xi) d\xi
=:\sum_{j\in\Z}
H^{(u)}_{j+k}f(x),$$
where
$ \Xi_{k,j}^u(\xi):=\int e^{-i\xi\cdot t+iu\g(|t|)}K(t)\Psi_{j+k}(t)dt$.
%$$
%\begin{aligned}
%    H^{(u)}_{j+k}f(x):=&\ \int
%\psi(2^{-\tilde{k}}\xi) e^{i\xi\cdot x}
%\left[\int e^{-i\xi\cdot t+iu\g(|t|)}K(t)\psi_{j+k}(t)dt  \right]d\xi\\
%=&\ 2^{\tilde{k}n}\int
%\psi(\xi) e^{i2^{\tilde{k}}\xi\cdot x}
%\left[\underbrace{\int e^{-i2^{\tilde{k}}\xi\cdot t+iu\g(|t|)}K(t)\psi_{j+k}(t)dt}_{\Xi_j}  \right]d\xi.
%\end{aligned}
%$$
Using (\ref{p8}) and the assumption $\g(1)=1$, we have $2^{\A_0 k}\le \g(2^k)\le 2^{\A_1 k}$. This, combined with (\ref{para}), implies that  $\g(2^k)\sim 2^{\tilde{k}+k}$,
and $2^{N_0(k+\tilde{k})}\ge  2^{kn}$ whenever $N_0$ large enough.
Thus,
it is sufficient for (\ref{go1}) to show  that for any $N'\in\N$,
\begin{align}
\Big\|V_r\Big(\sum_{|j|\le \lambda+10}H^{(u)}_{j+k}f: u\in[1,2]\Big)\Big\|_{L^p(I_{k,\e_0})}
&\gtrsim\  \g(2^k)^{1/r}2^{-kn}2^{kn/p},\label{mai}\\
\Big\|V_r\Big(\sum_{|j|> \lambda+10}H^{(u)}_{j+k}f: u\in[1,2]\Big)\Big\|_{L^p(I_{k,\e_0})}
&\les_{N'}\  2^{-N'(k+\tilde{k})}2^{kn/p}.\label{eor}
\end{align}

We first
prove (\ref{eor}).  Using triangle inequality and (\ref{rou1}), we bound the
 $V_r$ semi-norm  on the left-hand side of  (\ref{eor}) by
$$
\begin{aligned}
 \sum_{|j|> \lambda+10}V_r\big(H^{(u)}_{j+k}f:u\in[1,2]\big)
\les& \sum_{|j|> \lambda+10}\|\p_u H^{(u)}_{j+k}f\|_{L^1_u([1,2])}.
\end{aligned}
$$
Then (\ref{eor}) follows from the estimate
\beq\label{ai01}
\|\p_u H^{(u)}_{j+k}f\|_{L^1_u([1,2])}
\les_{N} 2^{jb-N(\tilde{k}+k)}\min\{1,2^{-jN}\}
\eeq
for any $N\in\N$ and $|j|>\lambda+10$,
where the constant $b$ satisfies
\beq\label{fc2}
b=\left\{
\begin{aligned}
&\A_0 \ \ \ \ {\rm if} \ j\le -\lambda-10,\\
 & \A_1\  \ \ \  {\rm if}\  j\ge \lambda+10
\end{aligned}
\right.
\eeq
with $\A_0$ and $\A_1$ as in Subsection \ref{Fursub1}. It remains to prove  (\ref{ai01}).
Direct computation shows
\beq\label{dao1}\p_u H^{(u)}_{j+k}f(x)
=i2^{\tilde{k}n}\int \check{\psi}(2^{\tilde{k}}x-2^{\tilde{k}+k+j}t)
e^{iu\g(2^{k+j}|t|)}\g(2^{k+j}|t|)\tilde{\Psi}(t)dt,
\eeq
where $\tilde{\Psi}$ is a variant of ${\Psi}$, and its support is in  $[2^{-\lambda},2^{\lambda}]$.
 Since $\check{\psi}$ is a Schwartz function,
$|t|\in [2^{-\lambda},2^{\lambda}]$ and $x\in I_{k,\e_0}$, we
infer that for any $N_1\in\N$ and $|j|>\lambda+10$,
\beq\label{lowb}|\check{\psi}(2^{\tilde{k}}x-2^{\tilde{k}+k+j}t)|
\les_{N_1} \frac{1}{1+(\max\{|2^{\tilde{k}}x|,|2^{\tilde{k}+k+j}t|\})^{N_1}}
\les_{N_1} \frac{1}{1+2^{N_1(\tilde{k}+k)}\max\{1,2^{jN_1}\}}.
\eeq
Besides,
using (\ref{p8}), and (\ref{curv}) with $j=1$, we have
\beq\label{dao2}\g(2^{k+j}|t|)\sim \g(2^{k+j})=\frac{\g(2^{k+j})}{\g(2^{k})}\frac{\g(2^{k})}{2^k\g'(2^{k})}
2^k\g'(2^{k})\les 2^{jb}2^{k+\tilde{k}}
\eeq
with $b$ given by (\ref{fc2}).
Then,
combining (\ref{dao1}), (\ref{dao2}), and (\ref{lowb}) with $N_1\ge n+1$,  we can obtain
 (\ref{ai01}) by taking  $N=N_1-n$.

We now show (\ref{mai}).
 %we have that the phase of $\Xi_j$ does not have any critical points, Indeed, we have
Let  $\Phi(2^{-k}t):=\Phi_\la (2^{-k}t):=\sum_{|j|\le \lambda+10}\Psi_{j+k}(t)$. We can rewrite the sum over $j$ on the left-hand side of (\ref{mai}) as
\beq\label{lc1}
\begin{aligned}
%=&\ 2^{\tilde{k}n}\int
%\psi(\xi) e^{i2^{\tilde{k}}\xi\cdot x}
%\left[\int e^{-i2^{\tilde{k}}\xi\cdot t+iu\g(|t|)}K(t)\phi(2^{-k}t)dt  \right]d\xi\\
 2^{\tilde{k}n}\int
\psi(\xi) e^{i2^{\tilde{k}}\xi\cdot x}
\Big(\int e^{-i2^{\tilde{k}+k}\xi\cdot t+iu\g_k(|t|)}K(t)\Phi(t)dt  \Big)d\xi,
\quad {\rm where}\ \g_k(\cdot):=\g(2^k\cdot).
\end{aligned}
\eeq
Let $\gamma_k$ be the inverse function of $(\g_k)'$,
and denote
$\tilde{F}(s):=-s\gamma_k(s)+(\g_k\circ\gamma_k)(s)$. Then we have  $\tilde{F}'=-\gamma_k$.
For every $\xi\in\pm [2^{-1},2]$ and each $u\in [1,2]$, we can observe from
$C_1\lambda\ge 4$ (see (\ref{choos})) that the phase    $2^{\tilde{k}+k}\xi\cdot t-u\g_k(|t|)$ in (\ref{lc1}) has a critical point $t_*=\xi|\xi|^{-1}\gamma_k(2^{\tilde{k}+k}u^{-1}|\xi|)\in \pm [2^{-\lambda},2^{\lambda}]$. Then,
by  the stationary phase
theorem, we deduce from (\ref{po1}), (\ref{po2}) and (\ref{choos}) that the oscillatory integral  in the round bracket of (\ref{lc1}) can be described as
\beq\label{expa1}
%\int e^{-i2^{\tilde{k}+k}\xi\cdot t+iu\g_k(|t|)}K(t)\phi(t)dt
%=
e^{iu\tilde{F}(2^{\tilde{k}+k}|\xi|/u)}
\Xi_1\big(2^{\tilde{k}+k}\xi,\g(2^k)u\big)
+\kappa_1\big(2^{\tilde{k}+k}\xi,\g(2^k)u\big),
\eeq
where the functions  $\Xi_1$ and $\kappa_1$ satisfy (\ref{x1}) with $\Xi=\Xi_1$ and (\ref{x2}) with $\kappa=\kappa_1$, respectively. In particular, we  have
 $2^{n(\tilde{k}+k)/2}|\Xi_1|\ge c_0'$ for some uniform $c_0'>0$ (since $2^{n(\tilde{k}+k)/2}\Xi_1\big(2^{\tilde{k}+k}\xi,\g(2^k)u\big)\sim K(t_*)\Phi(t_*)$)   and 
$\|\kappa_1(2^{\tilde{k}+k}\xi,\g(2^k)u)\|_{C^1_u}\les_N 2^{-N({\tilde{k}+k})}$
for any $N\in\N$. Writing
$$H^{(u)}_{1,\circ}f(x):=2^{\tilde{k}n}\int
\psi(\xi) e^{i2^{\tilde{k}}\xi\cdot x}
\kappa_1(2^{\tilde{k}+k}\xi,\g(2^k)u) d\xi,$$
we can deduce by following the proof of the estimate (\ref{eor}) that for any $N\in\N$,
$$
\|V_r\big(H^{(u)}_{1,\circ}f: u\in[1,2]\big)\|_{L^p(I_{k,\e_0})}
\les_N 2^{-N({\tilde{k}+k})}2^{kn/p}.
$$
Thus, we focus on the contribution from the first
term in (\ref{expa1}).
Let $\Xi_2:=2^{n(\tilde{k}+k)/2}\Xi_1$
and define
$$H^{(u)}_{2,\circ}f(x):=2^{(\tilde{k}-k)n/2}\int
\psi(\xi) e^{i2^{\tilde{k}}\xi\cdot x+iu\tilde{F}(u^{-1}2^{\tilde{k}+k}|\xi|)}
\Xi_2(2^{\tilde{k}+k}\xi,\g(2^k)u)d\xi.
$$
We reduce the matter to estimating the operators $\{H^{(u)}_{2,\circ}\}_{u\in [1,2]}$.
For  each $ x\in I_{k,\e_0}$ and $u\in [1,2]$, by $0<C_2\e_0 \le 1/4$ in (\ref{choos}) and $\tilde{F}'=-\gamma_k$, the phase $2^{\tilde{k}}\xi\cdot x+u\tilde{F}(u^{-1}2^{\tilde{k}+k}|\xi|)$ has a critical point
$\xi_{u,x}= u2^{-\tilde{k}}\g'(|x|)|x|^{-1}x\in \pm [2^{-1},2].$
By the stationary phase
theorem again,  we can rewrite  $H^{(u)}_{2,\circ}f$  as
$$
H^{(u)}_{2,\circ}f(x)=2^{-kn}e^{iu\g(|x|)}
\tilde{\Xi}_2\big(2^{\tilde{k}}x,\g(2^k)u\big)+O(2^{-N({\tilde{k}+k})})
$$
for any $N\in\N$,
where the function $\tilde{\Xi}_2$ is a variant of ${\Xi}_2$. Since the error term hidden in
 $O(2^{-N({\tilde{k}+k})})$ can be controlled as  $H^{(u)}_{1,\circ}f$, it suffices to prove
\beq\label{eds}
\|V_r\big(H^{(u)}_{3,\circ}f:u\in[1,2]\big)\|_{L^p(I_{k,\e_0})}
\gtrsim \g(2^k)^{1/r}2^{-kn}2^{kn/p},
\eeq
where the operator $H^{(u)}_{3,\circ}$ is defined by
$$H^{(u)}_{3,\circ}f(x):=2^{-kn}e^{iu\g(|x|)}
\tilde{\Xi}_2\big(2^{\tilde{k}}x,\g(2^k)u\big).$$
%$$
%\frac{u\g_k'(|\bar{x}|)\bar{x}}{|\bar{x}|2^{\tilde{k}+k}}=:\xi(\bar{x}).$$
%$$\na_\xi(2^{\tilde{k}}\xi\cdot x+u\tilde{F}(u^{-1}2^{\tilde{k}+k}|\xi|))
%=2^{\tilde{k}}x+2^{\tilde{k}+k}\tilde{F}'(u^{-1}2^{\tilde{k}+k}|\xi|)\xi|\xi|^{-1},$$
%we see $y:=\tilde{F}'(u^{-1}2^{\tilde{k}+k}|\xi|)\sim1$
%which satisfies since $\g_k'(y)=u^{-1}2^{\tilde{k}+k}|\xi|$.
%Denote $\tilde{x}=2^{-k}x$.
%Since $|x|\sim 2^k$,
%using the stationary phase expansion, we can deduce that the phase function has a critical point
%$$\xi=\frac{u\g_k'(|\bar{x}|)\bar{x}}{|\bar{x}|2^{\tilde{k}+k}}=:\xi(\bar{x}).$$
%So we have
%$$
%H^{(u)}_{2,\circ}f=2^{-kn}e^{iu\g(|x|)}\tilde{b}(2^{\tilde{k}+k}\xi(\bar{x})u^{-1},\g(2^k)u).
%$$
Note $|\tilde{\Xi}_2(2^{\tilde{k}}x,\g(2^k)u)|\ge c_0$ for some uniform  $c_0>0$. For every $x\in I_{k,\e_0}$, we denote
\beq\label{ed2}
\Lambda_x:=\{l\in\Z:\ {\pi}^{-1}{\g(|x|)}\le l\le 2{\pi}^{-1}{\g(|x|)}\},
\eeq
and then  obtain, for all $l\in \Lambda_x$, that
\beq\label{edd}
u_l^x:={\pi l}/{\g(|x|)}\in[1,2]\quad {\rm and}\quad |u_{l+1}^x-u_l^x|\les 1/{\g(|x|)}.
\eeq
From (\ref{ed2}) and (\ref{edd}) we can infer
\beq\label{end2}
\big(\sum_{l\in \Lambda_x}|e^{iu_{l+1}^x\g(|x|)}-e^{iu_{l}^x\g(|x|)}|^r\big)^{1/r}
\gtrsim \g(|x|)^{1/r}\ {\rm and}\ \big(\sum_{l\in \Lambda_x}|u_{l+1}^x-u_l^x|^r\big)^{1/r}\les \g(|x|)^{1/r-1}.
\eeq
Finally, using
triangle inequality, (\ref{end2}) and (\ref{p8}) we have
$$
\begin{aligned}
(\sum_{l\in \Lambda_x}|H^{(u_{l+1}^x)}_{3,\circ}f(x)-H^{(u_l^x)}_{3,\circ}f(x)|^r)^{1/r}
\gtrsim&\  2^{-kn}\g(|x|)^{1/r}\sim 2^{-kn}\g(2^k)^{1/r},
\end{aligned}
$$
which immediately yields (\ref{eds}).
\section{Endpoint result for homogeneous  phase functions}
\label{weakht}
In this section, we show Theorem \ref{endpoint2} by applying
Bourgain's interpolation trick (see, e.g. Section 6.2 in \cite{CSWW99}) and using Lemma \ref{short22} below.
Let $\chi$ be the smooth function as defined in Subsection \ref{equ21}. For every $u\in \R$ and $l\in\Z$, let $H^{(u)}_l$ be the operator defined as in (\ref{sig}).
\begin{lemma}\label{short22}
 Let
 $n\ge2$, $\g\in \U$, $p'\in [2n,\infty)$ and let $r=p'/n$.
 Assume that $\g$ satisfies (\ref{homo}).
 Then
\begin{equation}\label{a2.2}
    \big\|\sum_{l\ge 1}\mathcal{G}_lf
    \|_{L^{p,\infty}}\lesssim
    \|f\|_{L^{p,1}}.
\end{equation}
where the function $\mathcal{G}_lf$ is defined by
\[
\mathcal{G}_lf(x):=\big(\sum_{j\in\Z}\|\chi(u)H^{(2^{j}u)}_{l+j_*}f(x) \|_{B_{r,1}^{1/r}(u\in \R)}^r\big)^{1/r},\ \ \ \ x\in\R^n.
\]
\end{lemma}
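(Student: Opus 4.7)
The plan is to apply Bourgain's interpolation trick between two strong $L^q$ estimates for $\mathcal{G}_l$ at exponents straddling the endpoint $p$, exploiting the homogeneity (\ref{homo}) to collapse the scale index $m$ from Proposition \ref{short2} and thereby make all implicit constants linear in $l$ rather than carrying a factor $\#\mathcal{P}\les l$. Specifically, (\ref{homo}) together with $\g\in\U$ yields $\g(2^{l+j_*})/\g(2^{j_*}) \sim \g(2^l)$ independently of $j$, so the set $S_{l,m}$ from (\ref{slim}) is non-empty for a unique $m=m(l)$ (up to $O(1)$) satisfying $m(l)+l \sim c_\g l$ with some $c_\g>0$ encoding the order of $\g$.

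With this collapse in place, I would produce two strong $L^q$ bounds for $\mathcal{G}_l$ with $r=p'/n$ on either side of $p$. Above $p$, choose $q_1\in\bigl(p,\min\{(p'/n)',\,2n/(2n-1)\}\bigr)$ so that $q_1'/n<p'/n$; Proposition \ref{short2} then yields the decay
\[
\|\mathcal{G}_l f\|_{L^{q_1}}\les 2^{-\delta_1 l}\|f\|_{L^{q_1}}, \qquad \delta_1=c_\g(n/q_1'-n/p')>0.
\]
Below $p$, Proposition \ref{short2} no longer applies directly (the constraint $q_0'/n<r$ fails), so I would instead revisit Lemma \ref{400l} at $(q_0,r)=(q_0,p'/n)$, which is admissible since $p'>2n$ makes $r\ge 2$ and $q_0\le (p'/n)'$ for $q_0$ close enough to $p$. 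Promoting the resulting $w$-norm bound to the Besov $W$-norm via (\ref{cha1}) leaves the derivative $\p_u\bigl(\chi(u)H^{(2^ju)}_{l+j_*}f\bigr)$ to contribute a factor $\sim 2^j\g(2^{l+j_*})\sim\g(2^l)\sim 2^{c_\g l}$ (using (\ref{homo}) and $2^j\g(2^{j_*})\sim 1$), producing a controlled growth
\[
\|\mathcal{G}_l f\|_{L^{q_0}}\les 2^{\eta_0 l}\|f\|_{L^{q_0}}
\]
with $\eta_0=\eta_0(q_0)\ge 0$ tending to $0$ as $q_0\uparrow p$.

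Finally, I would invoke Bourgain's interpolation lemma (Section~6.2 of \cite{CSWW99}) with $q_0<p<q_1$ chosen so that $1/p=(1-\theta)/q_0+\theta/q_1$ and the Bourgain balance $(1-\theta)\eta_0=\theta\delta_1$ both hold: this converts the two strong bounds into the restricted weak-type statement (\ref{a2.2}). The main obstacle is the growth bound at $q_0<p$: one must track the derivative factor through the chain (\ref{cha1})--Lemma \ref{400l} and the homogeneity collapse precisely enough that Bourgain's balance is realizable with admissible $q_0,q_1$. If the naive estimate for $\eta_0$ turns out too coarse, a finer argument splitting $\mathcal{G}_l$ via the Littlewood--Paley decomposition $f=\sum_k P_{k-j_*}f$ and optimizing over $k$, as in the proof of Lemma \ref{l2}, may be required.
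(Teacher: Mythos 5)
Your proposal is essentially the paper's proof: the homogeneity condition (\ref{homo}) collapses the index $m$ to a single value $a_l$ with $2^{a_l+l}\sim\g(2^l)$, one then proves strong $L^q$ bounds for $\mathcal{G}_l$ with exponential growth $2^{\eta_0 l}$ at some $q_0<p$ and exponential decay $2^{-\delta_1 l}$ at some $q_1>p$ (both obtained from the chain (\ref{cha1}), Lemma \ref{400l} and (\ref{Go4}), i.e.\ the estimates (\ref{ms1})--(\ref{ms2})), and Bourgain's interpolation trick delivers the restricted weak-type bound at $p=(nr)'$, where the balance $(1-\theta)\eta_0=\theta\delta_1$ holds automatically. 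The only caveat is your decay-side step: you invoke Proposition \ref{short2}, whose hypothesis forces $q_1<2n/(2n-1)$, so your interval for $q_1$ is empty at the endpoint $p'=2n$ (where $p=2n/(2n-1)$, $r=2$), which the lemma's hypothesis $p'\in[2n,\infty)$ includes; the paper avoids this by appealing directly to (\ref{ms1}), valid for all $1<q\le 2$ and $2\le r\le q'$, so that $q_1=2$ is admissible there. With that substitution your argument goes through for the full stated range.
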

We first prove Theorem \ref{endpoint2} under the assumption that    Lemma \ref{short22} holds.
\begin{proof}[Proof of Theorem \ref{endpoint2}]
Following  the arguments in Section \ref{redut1}, we reduce the matter to  proving  the associated long and short variation-norm estimates.
Since
 Proposition \ref{long0} provides the desired long variation-norm estimate (the restriction $r=p'/n>2$ was used),  we
just need to prove the desired  short variation-norm estimate
\beq\label{dpp2}
\big\|V_{p'/n}^{\rm sh}(Hf)\big\|_{L^{p,\infty}}\les \|f\|_{L^{p,1}},\quad 2n\le p'<\infty.
\eeq
 % $p'\in[2n,\infty)$.
 Indeed,
employing  the embedding $B_{r,1}^{1/r}\hookrightarrow V^r$ for all $r\in[1,\infty)$,  (\ref{tc}), (\ref{edf}), we can obtain (\ref{dpp2}) by combining  Lemma \ref{l1}
and Lemma \ref{short22}. %This concludes the proof of Theorem \ref{endpoint2}.
\end{proof}

\begin{proof}[Proof of Lemma \ref{short22}]
For each $l\ge 1$, recall the decomposition $\Z=\cup_{m\in\mathcal{P}_l}S_{l,m}$ with the sets $S_{l,m}$ and $\mathcal{P}_l$ defined as  in (\ref{slim}) and (\ref{Go5}), respectively.
Let  $a_l$ denote a constant given by $2^{a_l}:=2^{-l}\g(2^l)$. Since $\g$ satisfies (\ref{homo}), then, we have the estimate
$$2^{m+l}\sim \frac{\g(2^{l+j_*})}{\g(2^{j_*})}\sim \g(2^l)=2^{a_l+l}.
$$
 Using this estimate, we have a crucial  observation
\beq\label{ml1}
|m-a_l|\les 1\quad\quad ({\rm yielding}\quad \#\PP_l\les 1).
\eeq
Note that  the implicit constants are independent of $l$. 
It now  suffices to show that for each $1<p\le 2$ and each $2\le r\le p'$ (the range of $(p,r)$ can also be rewritten as $r\ge 2$ and $1<p\le r'$),
\beq\label{inter}
\big\|\mathcal{G}_{l,m}f
    \|_p\lesssim
    2^{(m+l)(1/r-n/p')}
    \|f\|_p,
\eeq
where the function $\mathcal{G}_{l,m}f$ is given by
$$\mathcal{G}_{l,m}f(x)=\big(\sum_{j\in S_{l,m}}\|\chi(u)H^{(2^{j}u)}_{l+j_*}f(x) \|_{B_{r,1}^{1/r}(u\in\R)}^r\big)^{1/r},\quad x\in\R^n.
$$
In fact, for each $r\geq 2$, considering that $n\geq 2$, we deduce from  (\ref{inter}) that there exist two constants  $p_1$ and $p_2$ such that
 $(nr)'\in (p_1,p_2)  \subset (1,r')$, and  $\|\mathcal{G}_{l,m}f
    \|_{p_i}\lesssim
    2^{(m+l)(1/r-n/p_i')}
    \|f\|_{p_i}$, $i=1,2$.
 Then, since
 (\ref{ml1}),
 the  desired result (\ref{a2.2}) follows by applying  Bourgain's interpolation trick to $\sum_{l\ge 1}\mathcal{G}_{l,m}$.% and using  Minkowski's inequality.

 At last, we show (\ref{inter}).
By the Littlewood-Paley decomposition (\ref{litt}),
(\ref{inter}) immediately  follows from
 (\ref{ms1}) and (\ref{ms2}). This completes the proof of Lemma \ref{short22}.
\end{proof}

\section*{Acknowledgements}
The author thanks Mariusz Mirek and Jim Wright for discussions regarding discrete variational inequalities, which in turn yielded a concise proof of Lemma \ref{4l}.
The author thanks Jiecheng Chen and Meng Wang for discussions, and Shaoming Guo for helpful comments. Thanks also go to Haixia Yu for providing important references.
 %The author would like to thank   Jiecheng Chen and  Meng Wang for discussions related to the topic of this paper. The author would like to thank   Shaoming Guo  for some helpful comments, and thank Haixia Yu for providing several important  references.
 %This work was supported by the NSF of China 11901301.
 %, the NSF of Jiangsu Province %BK20180721,  .
%\vskip.2in
%\appendix
%\section{}
%\label{app}

\vskip .2in

\end{document}